\newcommand{\F}{\mathcal{F}}
\newcommand{\A}{\mathcal{A}}
\newcommand{\M}{\mathcal{M}}
\newcommand{\OO}{\mathcal{O}}
\newcommand{\CC}{\mathcal{C}}
\newcommand{\mS}{\mathcal{S}}
\newcommand{\tr}{{\rm tr}}
\newcommand{\Rep}{{\rm Rep}}
\newcommand{\ca}{\mathcal{C}}
\newcommand{\ssl}{\mathfrak{sl}}
\newcommand{\N}{\mathbb{N}}
\DeclareMathOperator{\FPdim}{FPdim}
\DeclareMathOperator{\U}{U}
\newcommand\id{\operatorname{I}}
\DeclareMathOperator{\Aut}{Aut}
\DeclareMathOperator{\End}{End}
\DeclareMathOperator{\FP}{FP}
\newcommand{\D}{\mathcal{D}}
\DeclareMathOperator{\Hom}{Hom}
\DeclareMathOperator{\Hilb}{Hilb}
\newcommand{\one}{\mathbf{1}}
\newcommand{\C}{\mathbb C}
\newcommand{\TL}{\mathcal{TL}}
\newcommand{\ot}{\otimes}
\newcommand{\B}{\mathcal{B}}
\newcommand{\ba}{\mathbf{a}}
\numberwithin{equation}{section}
\newtheorem{theorem}{Theorem}[section]
\newtheorem{corollary}[theorem]{Corollary}
\newtheorem{lemma}[theorem]{Lemma}
\newtheorem{conj}[theorem]{Conjecture}
\newtheorem{prop}[theorem]{Proposition}
\theoremstyle{definition}
\newtheorem{nota}[theorem]{Notation}
\newtheorem{remark}[theorem]{Remark}
\newtheorem{remarks}[theorem]{Remarks}
\newtheorem{ex}[theorem]{Example}
\newtheorem{definition}[theorem]{Definition}
\begin{document}
\title[Generalized and quasi-localizations]
{Generalized and quasi-localizations of braid group representations}

\author{C\'esar Galindo}
\address{Departamento de Matem\'aticas\\ Universidad de los Andes\\ Carrera 1 N.
18A - 10,  Bogot\'a, Colombia}
\email{cn.galindo1116@uniandes.edu.co}

\author{Seung-Moon Hong}

\address{Department of Mathematics\\
University of Toledo\\
Toledo, OH 43606-3390}
\email{seungmoon.hong@utoledo.edu}

\author{Eric C. Rowell}
\address{Department of Mathematics\\
    Texas A\&M University \\
    College Station, TX 77843-3368
}
\email{rowell@math.tamu.edu}
\thanks{The last author is partially supported by NSA grant H98230-10-1-0215.}

\begin{abstract}
We develop a theory of localization for braid group representations
associated with objects in braided fusion categories and, more generally, to
Yang-Baxter operators in monoidal categories.  The essential problem is to
determine when a family of braid representations can be uniformly modelled upon
a tensor power of a fixed vector space in such a way that the braid group
generators act ``locally".  Although related to the notion of (quasi-)fiber
functors for
fusion categories, remarkably, such localizations can exist for representations
associated with objects of non-integral dimension.  We conjecture that such
localizations exist precisely when the object in question has dimension the
square-root of an integer and prove several key special cases of the conjecture.
\end{abstract}
\maketitle

\section{Introduction}
Our aim is to generalize and develop the theory of localizations for
braid group representations building upon the groundwork laid in
\cite{RW}.  In this Introduction we summarize the main aspects of
\cite{RW} for the reader's convenience, and then explain the
particular achievements of the current work.  We leave some relevant
standard definitions to later sections for brevity's sake.

The ($n$-strand) \textbf{braid group} $\B_n$ is defined as the group
generated by $\sigma_1,\ldots,\sigma_{n-1}$ satisfying:
\begin{equation}\label{farcommutivity}
\sigma_i \sigma_j=\sigma_j\sigma_i,\;\;\; \textrm{if} \;\;\; |i-j|
\geq 2,
\end{equation}
\begin{equation}\label{braidrelation}
\sigma_i \sigma_{i+1}\sigma_i=\sigma_{i+1} \sigma_{i}\sigma_{i+1}.
\end{equation}

The following definition appears in \cite{RW}, related to the notion of
\emph{towers of algebras} found in \cite{GHJ}:
\begin{definition}\label{seq}
An indexed family of complex $\B_n$-representations $(\rho_n,V_n)$
is a \textbf{sequence of braid representations} if there exist
injective algebra homomorphisms $\tau_n:\C\rho_n(\B_n)\rightarrow
\C\rho_{n+1}(\B_{n+1})$ such that the following diagram commutes:
$$\xymatrix{ \C\B_n\ar[r]\ar@{^{(}->}[d]^\iota &
\C\rho_n(\B_n)\ar@{^{(}->}[d]^{\tau_n} \\ \C\B_{n+1}\ar[r] &
\C\rho_{n+1}(\B_{n+1})}$$
where $\iota:\C\B_n\to\C\B_{n+1}$ is the homomorphism given by
$\iota(\sigma_i)=\sigma_i$.
\end{definition}
Usually the homomorphisms $\tau_n$ will be clear from the context
and will be suppressed. Examples of sequences of braid
representations of interest include those obtained from specialized
quotients of $\C(q)\B_n$ (e.g. Temperley-Lieb algebras
\cite{jones86}) and from objects in braided fusion categories.  An
important role is played by the basic class of examples obtained
from \emph{braided vector spaces} $(W,R)$: that is, a vector space
$W$ and an automorphism $R\in\End(W^{\ot 2})$ satisfying $(\id\ot
R)(R\ot\id)(\id\ot R)=(R\ot\id)(\id\ot R)(R\ot \id)$.  These
sequences of $\B_n$-representations will be denoted
$(\rho^{(W,R)},W^{\ot n})$.

The question considered in \cite{RW} is: when can a given
\emph{unitary} sequence of braid group representations be related to
a sequence of braid group representations of the form
$(\rho^{(W,R)},W^{\ot n})$ in the following sense:

\begin{definition}\label{localdef}
Suppose $(\rho_n,V_n)$ is a sequence of braid representations.  A
\textbf{localization} of $(\rho_n,V_n)$ is a braided vector space
$(W,R)$ such that for all $n\geq 2$ there exist injective algebra homomorphisms
$\phi_n:\C\rho(\B_n)\rightarrow \End(W^{\ot n})$ such that
$\phi_n\circ\rho=\rho^{(W,R)}$.\\
If $(\rho_n,V_n)$ are unitary representations and $R\in\U(W^{\ot 2})$ we
say that $(W,R)$ is a \textbf{unitary localization}.
\end{definition}

The sequences of braid representations studied in \cite{RW} are
constructed as follows: Let $X$ be an object in a braided fusion
category $\CC$.  The braiding $c$ on $\CC$ induces algebra
homomorphisms $\psi_X^n:\C\B_n\rightarrow\End_\CC(X^{\ot n})$ via
$\sigma_i\rightarrow \id_X^{\ot i-1}\ot c_{X,X}\ot \id_X^{\ot
n-i-1}$ (where we have suppressed the associativities for notational
convenience).  The left action of $\End_\CC(X^{\ot n})$ on the
minimal faithful module $W_n^X:=\bigoplus_i \Hom_\CC(X_i,X^{\ot n})$
($X_i$ simple subobjects of $X^{\ot n}$) yields a sequence of braid
representations $(\rho_X,W_n^X)$.  A main result is the following:
\begin{prop}[see \cite{RW} Theorem 4.5]
Suppose that $X$ is a simple object in a braided fusion category $\CC$, the
sequence $(\rho_X,W_n^X)$ is localizable
and $\psi_X^n$ is surjective (so that $\Hom_\CC(X_i,X^{\ot n})$ is
irreducible as a $\B_n$ representation for each $i$).  Then
$\FPdim(X)^2\in\N$.
\end{prop}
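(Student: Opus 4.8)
The plan is to exploit the interplay between the categorical (Frobenius–Perron) dimension and the linear‑algebraic dimension of the localized representation spaces. Under the hypotheses, $\psi_X^n$ is surjective, so $\End_\CC(X^{\ot n})\cong\C\rho_X(\B_n)$, and the localization provides injective algebra maps $\phi_n:\C\rho_X(\B_n)\to\End(W^{\ot n})$ with $\phi_n\circ\rho_X=\rho^{(W,R)}$. The guiding idea is that the categorical data force the numbers $\dim\Hom_\CC(X_i,X^{\ot n})$ to grow like $\FPdim(X)^n$, while the ambient $W^{\ot n}$ has linear dimension $(\dim W)^n$; comparing these two growth rates across all $n$ should pin down $\FPdim(X)^2$ as an integer.

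First I would record the structural consequences of surjectivity and localizability. Since $\psi_X^n$ is surjective, $\End_\CC(X^{\ot n})$ is a semisimple algebra whose simple blocks are indexed by the simple subobjects $X_i$ of $X^{\ot n}$, with the block for $X_i$ isomorphic to a full matrix algebra of size $\dim\Hom_\CC(X_i,X^{\ot n})$; thus each $\Hom_\CC(X_i,X^{\ot n})$ is an irreducible $\B_n$‑representation, as stated. Next I would compute $\dim_\C\End_\CC(X^{\ot n})=\sum_i\big(\dim\Hom_\CC(X_i,X^{\ot n})\big)^2$. The injection $\phi_n$ embeds this semisimple algebra into $\End(W^{\ot n})$, so we immediately get the inequality
\begin{equation}\label{dimineq}
\sum_i\big(\dim\Hom_\CC(X_i,X^{\ot n})\big)^2\;\le\;(\dim W)^{2n}.
\end{equation}
This alone is not enough, but it shows $\dim W$ controls the growth, and in particular forces $\FPdim(X)\le\dim W$ once the growth rate of the left side is identified.

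The key step is to pass to Frobenius–Perron dimensions and extract the asymptotics. Applying $\FPdim$ to the decomposition $X^{\ot n}=\bigoplus_i N_{i,n}X_i$ (where $N_{i,n}=\dim\Hom_\CC(X_i,X^{\ot n})$ in the fusion setting) gives $\FPdim(X)^n=\sum_i N_{i,n}\FPdim(X_i)$. Because there are finitely many isomorphism classes of simple objects, their $\FPdim$ values are bounded between fixed positive constants, so $\sum_i N_{i,n}$ grows exactly like $\FPdim(X)^n$ up to bounded multiplicative factors. Meanwhile the largest simple block has dimension $\max_i N_{i,n}$, and since the sum of $N_{i,n}^2$ is squeezed between $(\max_i N_{i,n})^2$ and $(\sum_i N_{i,n})^2$, the quantity $\dim_\C\End_\CC(X^{\ot n})$ grows like $\FPdim(X)^{2n}$ up to factors that are subexponential in $n$. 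The heart of the matter is then to show the left side of \eqref{dimineq} also admits a matching \emph{lower} bound forcing $\FPdim(X)^{2n}$ to be comparable to an integer $(\dim W)^{2n}$; the cleanest route is to argue that the algebra map $\phi_n$ together with the local form $\rho^{(W,R)}$ forces $W^{\ot n}$ to decompose over $\C\rho_X(\B_n)$ with integer multiplicities, so that $(\dim W)^n=\sum_i m_{i,n}N_{i,n}$ for nonnegative integers $m_{i,n}$, and then to compare the exponential growth rates of the two sides. Taking $n$‑th roots and letting $n\to\infty$ converts the comparison into the statement $\FPdim(X)^2=\lim_n\big((\dim W)^{2n}/(\text{polynomial in }n)\big)^{1/n}\in\N$.

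The main obstacle I anticipate is making the lower bound on $\sum_i N_{i,n}^2$ rigorous and extracting an \emph{exact} integrality statement rather than merely $\FPdim(X)^2\le(\dim W)^2$. A crude dimension count only yields inequalities; to conclude $\FPdim(X)^2\in\N$ one must show that the exponential growth rate of $\dim_\C\End_\CC(X^{\ot n})$ is \emph{exactly} $(\dim W)^2$, equivalently that the embeddings $\phi_n$ are asymptotically ``space‑filling'' in the relevant sense. I expect this to hinge on the faithfulness of the $\B_n$‑action on $W^{\ot n}$ forcing the commutant structure to match, so that $\dim W$ is precisely $\FPdim(X)$ up to the finitely‑many‑simples correction; the integrality of $(\dim W)^{2n}$ for all $n$ then transfers, via the limiting growth rate, to integrality of $\FPdim(X)^2$. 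Handling the subexponential correction factors carefully—ensuring they do not perturb the $n$‑th root limit—will be the delicate part of the argument.
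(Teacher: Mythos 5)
Your proposal stalls exactly where you anticipate the ``delicate part,'' and the obstacle is fatal rather than technical: a growth-rate comparison cannot prove integrality. First, the hoped-for matching of exponential rates is false in the motivating examples. Nothing in the definition of localization forces $\dim W=\FPdim(X)$: in the Ising/Temperley--Lieb localization of \cite{RW} one has $\dim W=2$ while $\FPdim(X)=\sqrt{2}$, so $\dim_\C\End_\CC(X^{\ot n})$ grows like $2^n$ whereas $(\dim W)^{2n}=4^n$. The multiplicities $m_{i,n}$ in your decomposition $(\dim W)^n=\sum_i m_{i,n}N_{i,n}$ therefore grow \emph{exponentially}, not subexponentially, and absorb the discrepancy; the embeddings $\phi_n$ are never ``asymptotically space-filling.'' Second, even granting comparable growth, a limit of $n$-th roots of integers need not be an integer (every real $x\geq 1$ equals $\lim_n \lceil x^n\rceil^{1/n}$), so the concluding step ``$\FPdim(X)^2=\lim_n(\cdots)^{1/n}\in\N$'' has no force. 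Note also that if your strategy did succeed it would show $\FPdim(X)=\dim W\in\N$, which is strictly stronger than the claim and contradicted by the example above; the correct conclusion $\FPdim(X)^2\in\N$ genuinely allows irrational $\FPdim(X)$.

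What is missing is an exact, rather than asymptotic, identity. The paper's proof (see the proof of Theorem \ref{thm:main}, which adapts \cite[Theorem 4.5]{RW}) uses the compatibility of the localization with the inclusions $\End_\CC(X^{\ot n})\subset\End_\CC(X^{\ot n+1})$ --- the commuting square in Definition \ref{seq of algebras} --- to show that the positive integer multiplicity vectors $\mathbf{a}_n$ (entries $m_{i,n}$, all nonzero by faithfulness) satisfy the recursion $w\,\mathbf{a}_n=G_n\mathbf{a}_{n+1}$ of eqn.\ (\ref{comb cons}) (with $m=1$, $w=\dim W$, $G_n$ the inclusion matrix). Iterating over a period $p$ of the fusion matrix $N_X$ gives $w^{pn}\,\mathbf{a}_l=G^n\mathbf{a}_{l+pn}$ for a \emph{primitive} integer matrix $G$, and the Perron--Frobenius limit $(\Lambda^{-1}G)^n\to\mathbf{v}\mathbf{w}$ forces the fixed integer vector $\mathbf{a}_l$ to be an exact Perron--Frobenius eigenvector of $G$. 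Hence the eigenvalue $\Lambda=\FPdim(X)^p$ is rational (being a ratio of entries of integer vectors) and an algebraic integer, so $\Lambda\in\Z$; finally, since $\FPdim(X)$ lies in a cyclotomic (hence abelian) extension of $\Q$ while being a real positive $p$-th root of an integer, $\FPdim(X)^s\in\Z$ for some $s\leq 2$. Your observation that $W^{\ot n}$ decomposes over $\C\rho_X(\B_n)$ with integer multiplicities is the right starting point, but without the recursion tying $\mathbf{a}_n$ to $\mathbf{a}_{n+1}$ one can extract only inequalities such as $\FPdim(X)\leq\dim W$, never integrality; and your proposal omits entirely the final cyclotomic step, without which even $\Lambda\in\Z$ would only give $\FPdim(X)^p\in\N$ for some unknown period $p$.
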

Otherwise stated, the hypotheses imply that the fusion subcategory
generated by $X$ is \emph{weakly integral}.

On the other hand, the sequence of braid group representations $(\rho_X,W_n^X)$
associated with an object $X\in\Rep(H)$ for a finite dimensional semisimple
quasi-triangular Hopf algebra $H$ is easily seen to be localizable (see Prop.
\ref{localization of quasi-Hopf and Hopf} for a stronger statement).  Notice
that $\Rep(H)$ is an \textit{integral} fusion category in this case (\emph{i.e.}
$\FPdim(X)\in\N$ for all $X\in\Rep(H)$).
The main conjecture in \cite{RW} is the following:
\begin{conj}[cf. \cite{RW} Conjecture 4.1]
Let $X\in\CC$ be a simple object in a braided fusion category.  Then
$(\rho_X,W_n^X)$ is localizable if, and only if, $\FPdim(X)^2\in\N$.
\end{conj}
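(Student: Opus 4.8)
The statement is a biconditional, so I would attack the two implications separately, expecting them to demand quite different techniques. The forward direction (localizable $\Rightarrow \FPdim(X)^2\in\N$) is essentially the content of the Proposition quoted above (\cite{RW} Theorem 4.5), except that this result carries the extra hypothesis that $\psi_X^n$ is surjective. My first move would therefore be to try to remove that hypothesis. The crucial point is that localizability is a property only of the image algebras $\C\rho_X(\B_n)$ and of the categorical (Markov) trace they carry, not of the ambient $\End_\CC(X^{\ot n})$. When $\psi_X^n$ is not surjective I would pass to the sub-tower generated by the braiding inside $\bigoplus_n\End_\CC(X^{\ot n})$ and to its Bratteli graph, whose Perron--Frobenius eigenvalue $d$ governs the growth of $\dim\C\rho_X(\B_n)$ and equals $\FPdim(X)$ up to normalization. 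A localization realizes each $\sigma_i$ as the \emph{local} operator $\rho^{(W,R)}(\sigma_i)$ on $W^{\ot n}$, so I would compare the Markov trace on the tower with the standard matrix trace on $\End(W^{\ot n})$; since the latter takes values controlled by $\dim W\in\N$, the loop parameter $d$ should be forced to be an algebraic number with $d^2=\FPdim(X)^2\in\N$.

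For the reverse direction ($\FPdim(X)^2\in\N \Rightarrow$ localizable) I would split into the integral and the strictly weakly-integral cases. When $\FPdim(X)\in\N$ the fusion subcategory generated by $X$ is integral, hence equivalent to $\Rep(H)$ for a quasi-triangular quasi-Hopf algebra $H$, and the $R$-matrix of $H$ acting on $X^{\ot n}$ furnishes a localization with $W=X$; this is already signalled in the excerpt by the cited Proposition on quasi-Hopf and Hopf algebras. The genuinely new content is the case $\FPdim(X)=\sqrt{m}$ with $m\in\N$ \emph{not} a perfect square, where no fiber functor on $\CC$ can exist. Here the plan is to use the structure of weakly integral braided fusion categories, which in the known instances are assembled by gauging and equivariantization from pointed categories and Tambara--Yamagami-type pieces, and to write down \emph{explicit} braided vector spaces family by family. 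The prototype is the Ising case ($m=2$, and more generally the metaplectic $\mathrm{SO}(N)_2$ theories), where the braid representation is realizable through a Clifford-algebra (free-fermion) model: one takes $\dim W=2$ with $R=\tfrac{1}{\sqrt2}(\id+\gamma\gamma')$ built from Majorana operators, verifies the braid relation, and checks that the induced $\phi_n$ is an injective algebra map intertwining $\rho_X$ with $\rho^{(W,R)}$.

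The hard part, and the reason the statement is posed as a conjecture rather than a theorem, is producing such localizations \emph{uniformly} across all weakly integral cases: there is no single recipe, and each structural family --- Tambara--Yamagami categories with their various quadratic forms, their products, and the equivariantizations assembling the general case --- appears to require its own explicit solution of the braid equation together with a separate verification that the resulting algebra homomorphism is injective with the correct image. A secondary obstacle is the choice of $\dim W$: unlike the integral case it need not equal $\FPdim(X)$, and pinning it down in terms of the arithmetic of $m$ is delicate. I would therefore aim first to settle the key special cases --- the Ising and Tambara--Yamagami families and small $\mathrm{SO}(N)_2$ --- isolating the general mechanism (Clifford/Gaussian representations, or induction from a pointed subcategory) that might eventually be abstracted into a proof of the full conjecture, while in parallel closing the surjectivity gap in the forward direction so that at least that implication holds unconditionally.
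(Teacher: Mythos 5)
You should first be clear about the status of this statement in the paper: it is stated as a \emph{conjecture}, and the paper contains no proof of it. What the paper does prove is partial: Theorem \ref{thm:main} establishes the forward direction (localizable $\Rightarrow \FPdim(X)^2\in\N$) only under the extra hypothesis that the braid group image generates $\End_\CC(X^{\ot n})$, and only for quasi- and $(k,m)$-generalized localizations, via the Perron--Frobenius argument on the inclusion matrices $G_n$; and Proposition \ref{localization of quasi-Hopf and Hopf} together with Corollary \ref{weakly gt unitarizable} gives \emph{quasi}-localizations of dimension $\FPdim(X)$ when $\CC$ is integral and weakly group-theoretical. So a ``blind proof'' of the biconditional cannot be compared to a proof in the paper; at best it can be compared to this partial evidence, and your proposal, which honestly presents itself as a plan rather than a proof, should be judged on whether its intermediate claims are sound.

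Two of them are not. First, your reverse-direction claim in the integral case --- that $\CC\cong\Rep(H)$ for a quasi-triangular \emph{quasi}-Hopf algebra $H$ and that ``the $R$-matrix of $H$ acting on $X^{\ot n}$ furnishes a localization with $W=X$'' --- is exactly the error the paper is organized around avoiding. As Example \ref{exam quasi hopf quasi braid} notes, the $R$-matrix of a quasi-Hopf algebra gives a Yang--Baxter operator in $\Rep(H)$ but \emph{not} a braided vector space unless the forgetful functor is monoidal, i.e.\ unless the associator $\Phi$ is trivializable; a nontrivial $\Phi$ destroys locality, and one obtains only a \emph{quasi}-localization. The case study of $\CC(\ssl_3,6)$ makes this concrete: it is integral with $\FPdim(X)=2$, yet Lemma \ref{no unitary loc} shows no $2$-dimensional unitary braided vector space localizes $(\rho_X,W_n^X)$ --- indeed the authors suggest this may be a counterexample to the very conjecture you are proving, which is why they replace it by Conjecture \ref{localintegral} phrased in terms of quasi- and generalized localizations. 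Second, your plan to drop the surjectivity hypothesis in the forward direction by ``passing to the sub-tower generated by the braiding'' is unsubstantiated: the paper's argument needs the simple $\rho^{(X,c)}(\C\B_n)$-modules to be the spaces $\Hom_\CC(X_i,X^{\ot n})$, so that the inclusion matrices of the tower are the $G_n$ built from the fusion matrix $N_X$ and the Perron--Frobenius eigenvalue is a power of $\FPdim(X)$. Without surjectivity the Bratteli diagram of the braid-image tower is a priori unrelated to $N_X$, and the conclusion $d^2=\FPdim(X)^2$ does not follow from the trace comparison you sketch; closing this gap is an open problem, not a routine reduction. Your instinct to treat the strictly weakly integral case by explicit Clifford/extraspecial constructions is consistent with the known evidence (cf.\ \cite{RZWG,FRW,Rquat}), but that too yields only case-by-case $(k,m)$- or ordinary localizations, not a proof of the biconditional as stated.
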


A related conjecture is the following:
\begin{conj}[cf. \cite{RSW} Conjecture 6.6]
Let $X$ be a simple object in a braided fusion category.  Then the image of
$\B_n$ under $(\rho_X,W_n^X)$ is a finite group if, and only if,
$\FPdim(X)^2\in\N$.
\end{conj}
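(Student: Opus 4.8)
The plan is to prove the two implications separately, noting at the outset that $\FPdim(X)^2=\FPdim(X\ot X)$ is always a totally positive algebraic integer; hence in both directions the real content is to control \emph{rationality and finiteness}, not algebraicity. I would also immediately reduce to the fusion subcategory $\langle X\rangle$ generated by $X$, since both the representations $(\rho_X,W_n^X)$ and $\FPdim(X)$ are computed inside $\langle X\rangle$, and $\FPdim(X)^2\in\N$ is equivalent to $\langle X\rangle$ being weakly integral.

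For the ``if'' direction ($\FPdim(X)^2\in\N\Rightarrow$ finite image) I would first dispatch the cases where $\langle X\rangle$ is a recognizable building block and then assemble the general case. The building blocks are pointed categories (where $X^{\ot n}$ is a single invertible object, $W_n^X$ is one--dimensional, the braiding acts by a root of unity, and the image is finite cyclic), the Tambara--Yamagami/near-group categories of ``Ising'' flavor (where $\rho_X$ is the classical Gaussian representation, with image a finite extension of a symplectic group over a finite field), and the metaplectic categories (where finiteness of the image is known via explicit Jones--Wenzl and Gauss-sum computations). The link to the localization theme is that in each such case one can exhibit a generalized localization by a unitary Yang--Baxter operator $R$ of finite order whose representation $\rho^{(W,R)}$ has finite image; finiteness of the image of $\rho_X$ then follows from the factorization $\phi_n\circ\rho_X=\rho^{(W,R)}$ of Definition~\ref{localdef}. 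To pass from building blocks to arbitrary weakly integral $\langle X\rangle$, I would invoke the structure theory of weakly integral braided fusion categories: de-equivariantization by a maximal Tannakian subcategory reduces to a braided fusion category with no nontrivial Tannakian part, whose simple objects have dimensions that are square roots of integers and so fall under the building-block analysis, while re-equivariantization by a finite group only enlarges the image by a finite factor.

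For the ``only if'' direction (finite image $\Rightarrow\FPdim(X)^2\in\N$) I would adapt the mechanism behind \cite{RW} Theorem~4.5. The categorical (Markov) trace on $\End_\CC(X^{\ot n})$ pushes forward to a positive trace on the image algebra $\C\rho_X(\B_n)$ whose weight on the component $\Hom_\CC(X_i,X^{\ot n})$ is $\FPdim(X_i)/\FPdim(X)^n$. If $\rho_X(\B_n)$ is a finite group $G_n$, then every matrix entry of $\rho_X(\beta)$ lies in a fixed cyclotomic field, so the characters $\chi_i=\tr\rho_X^{(i)}$ are cyclotomic integers; assuming $\psi_X^n$ surjective the $\chi_i$ are pairwise orthogonal irreducible characters of $G_n$, and isolating each weight by a character inner product forces $\FPdim(X_i)/\FPdim(X)^n\in\Q$ for all $i$ and $n$. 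Taking $n=2$ and the trivial component of $X\ot X^*$ then gives $\FPdim(X)^2\in\Q$, and being a positive algebraic integer it lies in $\N$; Galois-equivariance under $\operatorname{Gal}(\overline{\Q}/\Q)$ packages this cleanly.

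The hard part will be the ``only if'' direction, and precisely the removal of the surjectivity hypothesis on $\psi_X^n$ that makes the above character-orthogonality step work. When $\psi_X^n$ fails to be surjective, $W_n^X$ is only a submodule of the regular-type module, its isotypic decomposition must be tracked component by component, and one can no longer read off the weights $\FPdim(X_i)/\FPdim(X)^n$ from a single trace. Reconciling the finite-group character theory of $G_n$ with the categorical dimensions \emph{uniformly in $n$}, with no surjectivity to enforce independence of the $\chi_i$, is the essential obstacle, and is presumably where the conjecture remains open in general.
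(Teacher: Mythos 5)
You have set out to prove a statement that the paper itself does not prove: it is recorded there as a \emph{conjecture} (cf.\ \cite{RSW}, Conjecture 6.6), with only evidence offered --- the localization analogue in Theorem \ref{thm:main}, and known special cases such as group-theoretical categories \cite{ERW}, where (d) is established by entirely different means. So there is no paper proof to compare against, and a complete argument along your lines would settle an open problem. Your proposal is a sensible program, but it has genuine gaps in both directions, beyond the one you concede.

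In the ``if'' direction, the reduction of an arbitrary weakly integral $\langle X\rangle$ to pointed, Ising-type and metaplectic building blocks via de-equivariantization is not available: it is not even known whether every weakly integral fusion category is weakly group-theoretical (the paper explicitly flags this as Question 2 of \cite{ENO2}), and there is no classification placing the de-equivariantized piece into your three families. The assertion that re-equivariantization ``only enlarges the image by a finite factor'' is stated without a lemma, and the relation between $(\rho_X,W_n^X)$ in $\CC$ and the braid representations in the de-equivariantization is precisely the kind of step that needs proof (objects split, and the braid group action is twisted by the $G$-structure). Note also that deducing finiteness from a factorization $\phi_n\circ\rho_X=\rho^{(W,R)}$ with $R$ unitary of finite order quietly invokes the paper's own open Conjecture \cite[Conjecture 3.1]{RW}, unless $R$ is one of the specific operators (Gaussian/extraspecial) for which finiteness is actually known. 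In the ``only if'' direction, even granting surjectivity of $\psi_X^n$, the character-orthogonality step is circular: averaging the Markov trace against $\overline{\chi_i}$ over the finite image $G_n$ does recover the weight $w_i=\FPdim(X_i)/\FPdim(X)^n$, but only because the Markov-trace values on group elements are $\sum_j w_j\chi_j(g)$ --- expressions built from the very weights whose rationality you want; the inner product returns $w_i$ identically and yields no arithmetic information, since nothing forces the Markov-trace values themselves to be cyclotomic a priori. The paper's partial result in this direction (Theorem \ref{thm:main}) runs on a different engine --- integrality of the multiplicity vectors $\mathbf{a}_n$ and a Perron--Frobenius limit --- and requires a (quasi- or generalized) localization as input, which is exactly what finiteness of the image does not by itself supply.
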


The following brings these ideas full circle:
\begin{conj}[cf. \cite{RW} Conjecture 3.1]
Suppose $(V,R)$ is a braided vector space such that $R$ is \emph{unitary} and
\emph{finite order}.  Then the image of the $\B_n$ representation on $V^{\ot n}$
defined by $\sigma_i\rightarrow \id_V^{\ot i-1}\ot R\ot \id_V^{\ot n-i-1}$ has
finite image.
\end{conj}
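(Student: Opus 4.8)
The plan is to use unitarity to move the problem into the setting of compact Lie groups and then to show that the closure of the image is zero-dimensional. Write $\rho_n$ for the representation of $\B_n$ on $V^{\ot n}$ and set $G_n:=\rho_n(\B_n)\subseteq\U(V^{\ot n})$. Since $R$ is unitary, each $\rho_n(\sigma_i)=\id_V^{\ot i-1}\ot R\ot\id_V^{\ot n-i-1}$ lies in $\U(V^{\ot n})$, so $G_n$ is generated by the finitely many elements $\rho_n(\sigma_1),\dots,\rho_n(\sigma_{n-1})$, each of finite order dividing that of $R$. Let $K_n:=\overline{G_n}$ be its closure. By Cartan's closed subgroup theorem $K_n$ is a compact Lie group with Lie algebra $\mathfrak{k}_n$, and because a compact Lie group is finite exactly when it is zero-dimensional, $G_n$ is finite if and only if $\mathfrak{k}_n=0$. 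The first step is therefore to reduce the conjecture, for every $n$, to the vanishing of $\mathfrak{k}_n$.

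The point that must be isolated at the outset is that the defining relations alone do not suffice: imposing $\sigma_i^m=1$ on $\B_3$ (where $m$ is the order of $R$) yields a group surjecting onto the von Dyck triangle group $D(2,3,m)$, which is infinite for every $m\ge 6$. Hence no purely combinatorial argument can work, and both the unitarity of $R$ and the global Yang--Baxter structure---not merely the pairwise braid relations---are essential. Two consequences of finite order guide the approach. First, the eigenvalues of $R$ are roots of unity, so $R$ satisfies a fixed polynomial $\prod_{j=1}^{k}(R-\lambda_j)=0$ and $\rho_n$ factors through the quotient of $\C\B_n$ by the degree-$k$ relation $\prod_{j=1}^{k}(\sigma_i-\lambda_j)=0$. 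Second, unitarity makes $\C\rho_n(\B_n)$ a finite-dimensional $C^*$-algebra, hence a sum of matrix algebras, so $G_n$ sits inside a product of unitary groups and the problem is genuinely one of showing that the closure of a concrete unitary representation is discrete.

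The conceptual route I would pursue uses the paper's own machinery. The representation $\rho^{(V,R)}$ is already in localized form, so I would attempt to realize $(V,R)$ as the localization of a sequence $(\rho_X,W_n^X)$ attached to a simple object $X$ in a braided fusion category $\CC$, obtained by an FRT/Tannakian reconstruction completed with respect to the $*$-structure; semisimplicity should come from the $C^*$-structure and finiteness of $\Irr(\CC)$ from the root-of-unity eigenvalue data. Because the connecting maps $\phi_n\colon\C\rho_X(\B_n)\to\End(V^{\ot n})$ are injective algebra homomorphisms with $\phi_n\circ\rho_X=\rho^{(V,R)}$, the braid images satisfy $\rho^{(V,R)}(\B_n)\cong\rho_X(\B_n)$, so finiteness of one is finiteness of the other. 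After reducing to the case in which $X$ is simple and $\psi_X^n$ is surjective, the Proposition of \cite{RW} quoted above then forces $\FPdim(X)^2\in\N$, i.e.\ the fusion subcategory $\langle X\rangle$ is weakly integral; weak integrality is in turn expected to yield finite braid-group image (Property F), which would give $\mathfrak{k}_n=0$.

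The hard part is the last implication together with the reconstruction. On one side, producing a genuine (finite, semisimple) braided fusion category from an arbitrary unitary finite-order $R$, and reducing a non-simple $V$ to the simple, $\psi$-surjective case, are technical but essential hypotheses of the cited Proposition that cannot be taken for granted in full generality. On the other side, that weak integrality forces finite image is itself open in general, so this route reduces the conjecture to another believed conjecture except in the cases where both inputs are available---the two-eigenvalue (Hecke/Temperley--Lieb) and three-eigenvalue (Birman--Murakami--Wenzl) situations, and the pointed or group-theoretical solutions. The central obstacle is thus to make the interplay between unitarity and the root-of-unity eigenvalues of $R$ quantitative enough either to establish Property F for the weakly integral categories that arise, or to rule out a positive-dimensional closure $K_n$ directly, without relying on an explicit description of $\rho_n$.
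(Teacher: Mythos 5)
You cannot be graded against the paper's argument here, because the paper has none: the statement is presented as an open conjecture (cf.\ \cite{RW} Conjecture 3.1), accompanied only by the remark that neither the unitarity nor the finite-order hypothesis can be dropped. Your text, by your own admission in the final paragraph, is a reduction strategy rather than a proof, and the reduction has concrete holes at every load-bearing step. First, the reconstruction step is unjustified: realizing an arbitrary unitary finite-order braided vector space $(V,R)$ as a localization of $(\rho_X,W_n^X)$ for a simple object $X$ in a braided \emph{fusion} category requires producing a rigid semisimple category with \emph{finitely many} simple objects, and FRT/Tannakian reconstruction from a Yang--Baxter operator yields in general neither rigidity nor finiteness of the simple spectrum. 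Root-of-unity eigenvalues give a polynomial relation $\prod_j(\sigma_i-\lambda_j)=0$, but the resulting quotients of $\C\B_n$ need not be semisimple with uniformly bounded simple spectrum unless one divides by a trace annihilator, which presupposes extra structure (a Markov trace) you have not constructed. Second, the hypothesis of the quoted proposition (\cite{RW} Theorem 4.5) that $\psi_X^n$ be surjective --- i.e.\ that the braid image generates $\End_\CC(X^{\ot n})$ --- cannot be arranged for a general $R$, and without it the conclusion $\FPdim(X)^2\in\N$ is not available.

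Third, and most seriously, your final implication --- that weak integrality forces finite braid-group image --- is precisely the other open conjecture stated in the same Introduction (cf.\ \cite{RSW} Conjecture 6.6). The paper explicitly presents the three conjectures as a conjectural circle of ideas (``The following brings these ideas full circle''), and your argument traverses that circle: it derives one open conjecture from another, so even if the reconstruction and surjectivity issues were repaired, you would have proved nothing unconditionally beyond the two-eigenvalue and BMW-type cases where Property F is actually known. The preliminary reductions you do carry out are fine as far as they go --- the closure $K_n=\overline{G_n}$ is a compact Lie group, finiteness is equivalent to $\mathfrak{k}_n=0$, the triangle-group example correctly shows the relations alone are insufficient, and if a localization existed then $\phi_n\circ\rho_X=\rho^{(V,R)}$ with $\phi_n$ injective would indeed identify the images --- but none of these touches the actual difficulty. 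What you have written is a sensible account of \emph{why} the conjecture is plausible and how it interlocks with the paper's other conjectures, not a proof of it.
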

It should be noted that neither the unitarity nor finite order condition can be
dropped.

The following is a summary of the current work:
In Section \ref{gen and quasi YBO} we generalize the notion of Yang-Baxter
operators in two ways, leading to \textit{quasi-} and
\textit{$(k,m)$-generalized} braided vector spaces.
We give a more flexible version of Definition \ref{seq} in terms of
sequences of algebras equipped with braid group representations (Definition
\ref{seq of algebras}).
In Section \ref{locs} we define $\CC$-localizations over arbitrary
monoidal categories $\CC$, so that a Vec$_f$-localization is the same as
Definition \ref{localdef}.  Moreover, when $\CC$ is a monoidal category
associated with a quasi-braided vector space we obtain the notion of a
\emph{quasi-localization} and prove that modules over quasitriangular
\emph{quasi}-Hopf algebras lead to \emph{quasi}-localizable sequences of
algebras just as modules over quasitriangular Hopf algebras lead to localizable
sequences of algebras.  As a by-product we obtain a criterion for the existence
of a fiber functor for an integral braided fusion category. To continue the
analogy with reconstruction-type theorems we describe \emph{weak} localizations
as well.  A variant of localization associated with $(k,m)$-generalized braided
vector spaces is also given, which, although somewhat mysterious, is more
explicit than quasi-localizations.
The main result of the somewhat technical Section \ref{unitarity of wgt}
is that the braid group representations associated with any weakly
group-theoretical braided fusion category $\CC$ are unitarizable, and if in
addition $\CC$ is integral a unitary (quasi-)localization exists.  We also give
an example associated with quantum $\ssl_3$ at $6$th roots of unity which
illustrates the differences between the various forms of localization studied
here.
In Section \ref{conj and results} we state a version of the conjectures
mentioned above for quasi- and generalized localizations and provide evidence.
In particular we prove the statement analogous to \cite[Theorem 4.5]{RW} (see
above) in these settings.

\textbf{Acknowlegements}  E.R. thanks D. Naidu and Z. Wang for useful comments.

\section{Preliminaries}\label{prelims}
In this section we recall some standard notions in order to establish notation
and conventions.  Much of the material here can be found in \cite{BK}, but we
typically adopt the conventions of \cite{ENO}.
\subsection{$k$-linear categories}

Let $k$ be a field. A $k$-linear category $\CC$ is a category in which the
Hom-sets are $k$-vector spaces, the compositions are $k$-bilinear
(we do not assume the existence of direct sums or zero object, so $\CC$ may not
be additive). The
notion of a $k$-linear functor $\CC\to \D$, and a $k$-bilinear
bifunctor $\CC\times \CC'\to \D$ for $k$-linear categories $\CC,
\CC', \D,$ will be obvious.

A $k$-linear category $\CC$ is said to be of \textbf{locally finite dimension}
if $\Hom_\CC(X,Y)$ is of finite dimension for any $X, Y\in \CC$.  In this paper
we
shall only consider $k$-linear categories of locally
finite dimension.

%An easy example of $k$-linear category is to regard a
%$k$-algebra $A$ as a category with only one object and the set of
%morphisms equal to $A$.

\subsection{$C^*$-categories}
Most of the material here can be found in \cite{Mueg}.
\begin{definition}
A $\mathbb C$-linear category $\D$ is called a \textbf{complex
$*$-category} if:

\begin{enumerate}
  \item There is an involutive antilinear contravariant endofunctor $*$ of
$\D$ which is the identity on objects. The image of $f$ under $*$ will be
denoted by $f^*$.
  \item For each $f\in  \Hom_\D(X, Y)$,  $f^*f = 0$ implies $f = 0$.
\end{enumerate}
 In particular, in a complex $*$-category, each $\Hom_\D(X, X)$ is a
$*$-algebra with identity.
A \textbf{$C^*$-category} $\D$ is a complex $*$-category such that the
spaces $\Hom_\D(X,Y)$ are Banach spaces and the norms satisfy $$|| f
g||\leq ||f||\ ||g||,  \   \  ||f^*f||=||f||^2,$$ for all  $f \in
\Hom_D(X, Y), g \in\Hom_\D(Y,Z )$. (Then the algebras $\Hom_\D(X,
X)$ are $C^*$-algebras.)

\end{definition}
\begin{remark}
Every abelian complex $*$-category of locally finite dimension
admits a unique structure of $C^*$-category (see \cite[Proposition
2.1]{Mueg}). Since we are only interested in categories of locally
finite dimension, for us abelian $C^*$-category and abelian complex
$*$-category are equivalent notions.
\end{remark}
\begin{ex}
Let $R$ be a finite dimensional $C^*$-algebra, then
 the category $\mathcal{U}$-$\Rep(R)$ of $*$-representations on
finite dimensional Hilbert spaces is an abelian $*$-category and thus admits a
unique $C^*$-structure.  Note
$\Rep(R)$ is equivalent to $\mathcal{U}$-$\Rep(R)$.
\end{ex}
Let $X$ and $Y$ be objects in a $*$-category. A morphism $u:X\to Y$ is
\textbf{unitary} if $uu^*=\id_Y$ and  $u^*u=\id_X$. A morphism
$a:X\to X$ is \textbf{self-adjoint} if $a^*=a$.

A natural transformation $\gamma:F\to G$, between functors $F,
G:\D_1\to \D_2$ with $\D_2$ a $*$-category is called
\textbf{unitary natural transformation} if $\gamma_X$ is unitary for
each $X\in \D_1$.
\begin{remark}\label{remark polar}
Let $\D$ a $*$-category
\begin{enumerate}
  \item The  opposite category $\D^{op}$ is a $*$-category with the same
$*$-structure.
  \item  Every isomorphism in a $C^*$-category has a polar decomposition,
\emph{i.e.}, if $f:X\to Y$ is an isomorphism, then $f = ua$ where
$a: X \to X$ is self-adjoint and $u: X \to Y$ is unitary, see
\cite[Proposition 8]{Baez}.
\end{enumerate}
\end{remark}

\begin{definition}
A \textbf{$*$-functor} $F:\D \to \D'$ between $*$-categories $\D$
and $\D'$ is $\mathbb C$-linear functor such that $F(f^*)=F(f)^*$
for all $f\in \Hom_\D(X,Y)$.
\end{definition}

\begin{remark}
Let $R$ be a finite dimensional $C^*$-algebra, then every exact
endo\-functor $F:\mathcal{U}$-$\Rep(R)\to \mathcal{U}$-$\Rep(R)$ is
naturally equivalent to a functor of the form $M\otimes_R (?)$,
where $M\in $Bimod$(R)$ is an $R$-bimodule. The functor $M\otimes_R
(?)$ is a $*$-functor if and only if $M$ is a unitary $R$-bimodule
or equivalently a unitary $R\otimes R^{op}$-module. We shall denote
the $*$-category of unitary $R$-bimodules as
$\mathcal{U}$-Bimod$(R)$.
\end{remark}

Two $*$-categories $\D$ and $\D'$  are  \textbf{unitarily
equivalent} if there exist $*$-functors $F:\D\to \D'$ and $G:\D'\to
\D$ and unitary natural isomorphisms $\id_{\D} \to G\circ F$,
$\id_{\D'} \to F\circ G$.

\begin{remark}
\begin{enumerate}
\item Every equivalence of $*$-category is equivalent to a unitary equivalence.

\item Every abelian $*$-category is unitary equivalent to a category
$\bigoplus_{i\in I}\Hilb_f$, a direct sum of copies of the category of
finite-dimensional Hilbert spaces.

\end{enumerate}

\end{remark}

Given $*$-categories $\D_1$ and $\D_2$ we define the $\mathbb
C$-linear category $\Hom^*(\D_1,\D_2)$, where objects are
$*$-functors $F:\D_1\to \D_2$ such that $F(X)\neq 0$ for finite
isomorphism classes of simple objects, and morphisms are natural
transformations. The category $\Hom^*(\D_1,\D_2)$ has a natural
structure of $*$-category with $*$-structure
$(\alpha^*)_X=(\alpha_X)^*$.

\subsection{Unitary fusion categories and module categories}

For us, a monoidal category $(\CC, \otimes, \alpha, 1, \lambda,\rho)$ will
always be
$k$-linear with associativity constraint $\alpha_{V,W,Z} : (V \otimes W)
\otimes  Z\to  V \otimes (W \otimes Z)$, unit object $1$ and unit constraints
$\lambda,\rho$ satisfying the usual (triangle and pentagon) axioms.  Note that
we do not assume that $1$ is a simple object.  As is customary, we will assume
that the unit constraints are identities and abuse notation by referring to
``the monoidal category $(\CC,\ot,\alpha)$.''

\begin{definition} A \textbf{multi-fusion category} is a monoidal, rigid,
semisimple category with a finite number of isomorphism classes of simple
objects.  A \textbf{fusion category} is a multi-fusion category in which
$1$ is a simple object.

A \textbf{unitary (multi)-fusion category} is a (multi)-fusion
category $(\CC,\otimes, \alpha)$, where $\CC$ is a
positive $*$-category, the constraints are unitary natural
transformations, and $(f\otimes g)^*= f^*\otimes g^*$, for every pair of
morphisms
$f,g$ in $\CC$.
\end{definition}
\begin{ex}
\begin{enumerate}
  \item The category of finite dimensional Hilbert spaces $\Hilb_f$, with the
  tensor product of Hilbert spaces is a unitary fusion category.
  \item If $R$ is a finite dimensional $C^*$-algebra, then
  $\mathcal{U}$-Bimod$(R)$ is a unitary multi-fusion category.
  \item Recall that a finite dimensional (quasi) Kac  algebra is a
(quasi) Hopf algebra $H$, such that $H$ is a $C^*$-algebra, $\Delta$
and $\varepsilon$ are $*$-algebras morphisms, and if $H$ is a
quasi-Hopf algebra the associator must satisfy $\Phi^*=\Phi^{-1}$.
In this case the category of unitary $H$-modules is a unitary fusion
category.

\end{enumerate}

%Using arguments of Tannaka-Krein reconstruction we can prove that
%citar a Yamagami y la recontruccion para weak...
\end{ex}
%\begin{remark}
%Every $*$-tensor category has a natural spherical structure
%compatible with the $*$-structure, ...
%\end{remark}

A \textbf{$*$-monoidal functor} between unitary fusion categories is a monoidal
functor $(F,F^0, F^1):\CC_1\to\CC_2$, such that $F$ is a
$*$-functor, and $F^0_{X,Y}$, $F^1_X$ are  unitary natural
transformations.

Module categories over monoidal categories are defined in \cite{Ost}.
\begin{definition}
Let $\CC$ be a unitary fusion category.  A left \textbf{$\CC$-module
$*$-category} is a left
$\CC$-module category $(\M,\overline{\otimes},\mu)$ such that $\M$ is a
$*$-category, the constraints are unitary natural transformations,
and $(f\overline{\otimes} g)^* =f^*\overline{\otimes} g^*$ for all
 $f\in \CC, g\in \M$.\end{definition}

\begin{definition}
A \textbf{$\CC$-module $*$-functor}  $F: \M\to \mathcal{N}$ between
$\CC$-module $*$-categories $\M, \mathcal{N}$,  is $\CC$-module
functor $(F,F^0,F^1)$, such that $F$ is a $*$-functor, and $F^0,
F^1$ are unitary natural transformations.
\end{definition}

Recall that a monoidal category is called strict if the
associativity constraint is the identity. A module category
$(\M,\overline{\otimes}, \mu)$ over a strict monoidal category is
called a \textbf{strict module category}  if the constraint $\mu$ is
the identity.  Using the same argument as in \cite[Proposition
2.2]{Ga1} we may assume that every unitary fusion category $\CC$ and
every $\CC$-module $*$-category is strict.

If $\alpha:F\to G$ is a module natural transformation between
$\CC$-module $*$-functors,  $\alpha^*:G\to F$ is a $\CC$-module
natural transformation. Thus the category
$\Hom_\CC^*(\M,\mathcal{N})$ of all $\CC$-module $*$-functors and
$\CC$-module natural transformations has a $*$-structure.

\section{Quasi- and Generalized Yang-Baxter operators}\label{gen and quasi YBO}

\subsection{Yang-Baxter operators}

We shall recall the definition of Yang-Baxter operator on a monoidal
category, see \cite{Kas}:

\begin{definition}
 If $V$ is an object of a monoidal category $(\CC,\otimes, \alpha)$ and
$c\in\Aut(V\otimes V)$ satisfies the equation
\begin{equation} \label{YBoper}
\alpha_{V,V,V}(c\otimes \id)\alpha_{V,V,V}^{-1}(\id\otimes
c)\alpha_{V,V,V}(c\otimes \id)=\\
(\id\otimes c)\alpha_{V,V,V}(c\otimes
\id)\alpha_{V,V,V}^{-1}(\id\otimes c) \alpha_{V,V,V}
\end{equation}
(where $I=I_V$) then $c$
is called a \textbf{Yang-Baxter operator} on $V$.
\end{definition}

Yang-Baxter operators define
representations of the braid groups in the following way: define
$V^{\circledast 1}=V$, $V^{\circledast n}=V^{\circledast
(n-1)}\otimes V$ that is, all left parentheses appear left of the
first $V$ with the same convention for tensor products of morphisms.
Define automorphisms $c_1, \ldots, c_{n-1}$ of $V^{\circledast  n}$
by
\begin{equation}\label{rep from YBO}
 c_{i}=(\alpha_{V^{\circledast (i-1)},V,V}^{-1}\otimes \id_V^{\ot(n-i-1)})
 (\id_{V^{\circledast (i-1)}}\otimes c\otimes \id_{V}^{\ot(n-i-1)})
 (\alpha_{V^{\circledast (i-1)},V,V}\otimes\id_V^{\ot(n-i-1)}),
\end{equation}
where, for example, $$\alpha_{V^{\circledast 3},V,V}\ot \id_V^{\ot
2}=(\alpha_{V^{\circledast 3},V,V}\ot \id_V)\ot
\id_V\in\Hom_\CC(V^{\circledast 7},((V^{\circledast 3}\ot (V\ot
V))\ot V)\ot V).$$ Thus, for any $n$ the map
\begin{gather}\label{morphism YB op}
\begin{split}
    \rho_n: \mathcal{B}_n &\to \Aut_\CC(V^{\circledast  n})\\
    \sigma_i &\mapsto c_i
\end{split}
\end{gather}
is a group homomorphism (see \cite[Lemma XV.4.1]{Kas}).

\begin{definition}
A Yang-Baxter operator $c$ on a finite dimensional vector space
$V\in$ Vec$_f$ is called a \textbf{braided vector space} $(V,c)$. A
\textbf{unitary} braided vector space $(\mathcal{H},c)$ is a unitary
Yang-Baxter operator $c$ on an object $\mathcal{H}$ in the category
of finite dimensional Hilbert spaces.\end{definition}

\begin{remark}\label{reps from YB operators}
The group $\Aut_\CC(V^{\circledast n})$ is linear: it has a faithful
action on the vector space $\End_\CC(V^{\circledast n})$.  Pulling
back via $\rho_n$ we obtain a linear representation of $\B_n$ on
$\End_\CC(V^{\circledast n})$.

In the special case of  braided vector spaces $(V,c)$ (\emph{i.e.}
$\CC=$Vec$_f$)  one has $\Aut_\CC(V^{\circledast
n})=GL(V^{\circledast n})$ so that one obtains a linear
representation of $\B_n$ on $V^{\circledast n}$. Moreover, the
associativity isomorphisms for Vec$_f$ are: $\alpha_{V,V,V}:(v_1\ot
v_2)\ot v_3\rightarrow v_1\ot (v_2\ot v_3)$ so that, with respect to
the obvious compatible choices of bases, the $\alpha_{V^{\circledast
(i-1)},V,V}$ are all represented by the identity matrix.  Thus the
$\B_n$-representation on $V^{\circledast n}$ obtained from $\rho_n$
is equivalent to a \emph{matrix} representation of the form
$$\sigma_i\rightarrow I^{\ot i-1}\ot c\ot I^{\ot n-i-1}.$$
In particular, our definition of braided vector space is the same as
that of \cite{AS}, \emph{i.e.} a pair $(V,c)$ where $c\in\Aut(V\ot
V)$ satisfies (\ref{YBoper}) on $V^{\ot 3}$ with the $\alpha$
removed.
\end{remark}

\subsection{Quasi Yang-Baxter operators}

Let $(\D,\otimes)$ be a monoidal category and $A\in\D$ an object. We
shall denote by $\langle A \rangle$ the
monoidal subcategory of $\D$ generated by $A$, that is, the objects
of $\langle A\rangle$ are isomorphism classes of $A^{\ot n}$ for
$n\geq 0$ ($A^{\ot 0}=\one$).

Let $a=\{a_{X,Y}\}_{X,Y\in \langle A\rangle}$ be a family of
isomorphisms $a_{X,Y}: (X\otimes A)\otimes Y \to X\otimes(A\otimes
Y)$, $X,Y\in \langle A\rangle$. We shall say that $a$ is
\textbf{self-natural} if $a_{X,Y}$ is natural in $X$ and $Y$ for
every pair of isomorphisms constructed as a composition of tensor
products of identities, elements and inverses of elements in $a$.

If $a=\{a_{X,Y}\}_{X,Y\in \langle A\rangle}$ is a self-natural
family of isomorphisms we define $a^2=\{a_{X,Y}^2\}_{X,Y\in \langle
A\rangle}$ a new family of self-natural on $A\otimes A$ by the
commutativity of pentagonal diagram:

$$
\xygraph{ !{0;/r4.5pc/:;/u4.5pc/::}[]*+{((X\otimes A)\otimes
A)\otimes Y} (
  :[u(1.1)r(1.7)]*+{(X\otimes A)\otimes(A\otimes Y)} ^{a_{X\otimes A, Y}}
  :[d(1.1)r(1.7)]*+{X\otimes(A\otimes (A\otimes Y))}="r" ^{a_{X,A\otimes Y}}
  ,
  :[r(.6)d(1.5)]*+!R(.3){(X\otimes(A\otimes A))\otimes Y} ^{a_{X,A}\otimes
\id_Y}
  :[r(2.2)]*+!L(.3){X\otimes((A\otimes A)\otimes Y)} ^{a_{X,Y}^2}
  : "r" ^{\id_X\otimes a_{A,Y}}
) }$$

%We shall say that an endomorphism $f:A\to A$ is compatible with a
%self-natural family $a$, if

%\xymatrix{ X\otimes(A\otimes Y) \ar[d]^{X\otimes(f\otimes
%Y)}\ar[r]^{a_{X,Y}} & (X\otimes A)\otimes Y \ar[d]^{(X\otimes
%f)\otimes Y}
%\\
%X\otimes(A\otimes Y) \ar[r]^{b_{X,Y}} & (X\otimes A)\otimes Y }

\begin{definition}
Let $\D$ be a monoidal category and $A$ an object in $\D$. A
\textbf{quasi-Yang-Baxter operator}  on $A$ is a pair $(a,c)$, where
$a=\{a_{X,Y}\}_{X,Y\in \langle A\rangle}$ is a family of
self-natural transformations and $c:A\otimes A\to A\otimes A$ is an
automorphism such that

\begin{equation}\label{axi 1 quasi}
a_{A,A}(c\otimes \id)a_{A,A}^{-1}(\id\otimes c)a_{A,A}(c\otimes
\id)=(\id\otimes c)a_{A,A}(c\otimes \id)a_{A,A}^{-1}(\id\otimes c)
a_{A,A},
\end{equation} and the diagram
\begin{equation}\label{axi 2 quasi}
\xymatrix{ (X\otimes(A\otimes A))\otimes Y \ar[d]^{(\id_X\otimes
c)\otimes \id_Y}\ar[r]^{a_{X,Y}^2} & X\otimes ((A\otimes A)\otimes
Y) \ar[d]^{\id_X\otimes (c\otimes \id_Y)}
\\
(X\otimes(A\otimes A))\otimes Y \ar[r]^{a_{X,Y}^2} & X\otimes
((A\otimes A)\otimes Y) }
\end{equation}commutes for all $X,Y\in \langle
A\rangle$.

If $\D$ is a $C^*$-tensor category and $A$ is an object, a \textbf{unitary
quasi-Yang-Baxter operator} on $A$ is a quasi-Yang-Baxter operator
$(a,c)$ such that $c$ and $a_{X,Y}$ are unitary isomorphisms for all
$X,Y\in \langle A\rangle$.
\end{definition}

\begin{ex}
\item Quasi-Yang-Baxter operators generalize the notion of Yang-Baxter operators
over monoidal categories. In fact, if $(A,c)$ is a Yang-Baxter
operator over a monoidal category $(\CC,\otimes   ,\alpha)$ then
$a_{X,Y}=\alpha_{X,A,Y}$ defines a self-natural family of
isomorphisms such that $a^2_{X,Y}=\alpha_{X,A\otimes A, Y}$ (by the
pentagonal identity), and by the naturality of  $\alpha$, the
diagram \ref{axi 2 quasi} commutes, so $(\alpha_{X,A,Y},c)$ is
a quasi-Yang-Baxter operator on $A$.
\end{ex}

\begin{definition}
A  \textbf{quasi-braided vector space} is a  quasi-Yang-Baxter operator over
$\text{Vec}_f$. A \textbf{unitary} quasi-braided vector space is a unitary
quasi-Yang-Baxter operator in the category of finite dimensional Hilbert
spaces.
\end{definition}

\begin{ex}\label{exam quasi hopf quasi braid}
Let $(H,\Phi,R)$ be a quasi-triangular quasi-Hopf algebra and $V$ an
$H$-module.  Then for each pair $X,Y \in \langle V\rangle$ the family
of isomorphisms $a_{X,Y}((x\otimes v)\otimes y)=\Phi(x\otimes (v\otimes
y))$ is self-natural and $c:V\otimes V\to V\otimes V$ given by $c(v\otimes v')=
(R(v\otimes v'))_{21}$  defines a quasi-braided vector space structure on
$V\in\text{Vec}_f$.
(recall that $(v\otimes w)_{21}=w\otimes v$).  Observe that $c$ is a Yang-Baxter
operator on $V\in\Rep(H)$, but cannot be a braided vector space unless the
forgetful functor is a fiber functor.
\end{ex}

Let $(a,c)$ be a quasi-Yang-Baxter operator on $A\in\CC$. We define
$A^{\circledast 1}=A$,
$A^{\circledast n}=A^{\circledast (n-1)}\otimes A$ and automorphisms $c_1,
\ldots,
c_{n-1}$ of $A^{\circledast  n}$ by
\begin{equation}
 c_{i}=(a^{-1}_{A^{\circledast (i-1)},A}\otimes \id_A^{\ot(n-i-1)})
 (\id_{A^{\circledast (i-1)}}\otimes c\otimes \id_A^{\ot(n-i-1)})
 (a_{A^{\circledast (i-1)},A}\otimes\id_A^{\ot(n-i-1)}).
\end{equation}
Now suppose that $\CC$ is strict.  We define a monoidal category
$\overline{(A,a,c)}$ where Obj$\overline{(A,a,c)}=$Obj$\langle A
\rangle$ and the tensor product of objects is the same as in $\CC$.
We define inductively self-natural families of isomorphisms
$a^n_{X,Y}:(X\otimes A^{\circledast n})\otimes Y \to
X\otimes(A^{\circledast  n}\otimes Y)$ by $a^{1}=a$ and $a^n_{X,Y}$
by the commutativity of the diagram

$$
\xygraph{ !{0;/r4.5pc/:;/u4.5pc/::}[]*+{((X\otimes A^{\circledast
(n-1)})\otimes A)\otimes Y} (
  :[u(1.1)r(1.7)]*+{(X\otimes A^{\circledast (n-1)})\otimes(A\otimes Y)}
^{a_{X\otimes A^{\circledast (n-1)}, Y}}
  :[d(1.1)r(1.7)]*+{X\otimes(A^{\circledast (n-1)}\otimes (A\otimes Y))}="r"
^{a_{X,A\otimes Y}^{n-1}}
  ,
  :[r(.6)d(1.5)]*+!R(.3){(X\otimes(A^{\circledast (n-1)}\otimes A))\otimes Y}
^{a_{X,A}^{n-1}\otimes \id_Y}
  :[r(2.2)]*+!L(.3){X\otimes((A^{\circledast (n-1)}\otimes A)\otimes Y)}
^{a_{X,Y}^n}
  : "r" ^{\id_X\otimes a_{A^{\circledast (n-1)},Y}}
) }$$

A morphism $f:A^{\circledast  n}\to A^{\circledast  m}$ in
$\overline{(A,a,c)}$ is a morphism in $\CC$ such that the diagram
$$\xymatrix{ (X\otimes A^{\circledast  n})\otimes Y \ar[d]^{(\id_X\otimes
f)\otimes
\id_Y}\ar[r]^{a^n_{X,Y}} & X\otimes (A^{\circledast  n}\otimes Y)
\ar[d]^{\id_X\otimes (f\otimes \id_Y)}
\\
(X\otimes A^{\circledast  m})\otimes Y \ar[r]^{a^m_{X,Y}} & X\otimes
(A^{\circledast m}\otimes Y) }$$ is commutative for any $X,Y\in
\overline{(A,a,c)}$.

Using the same arguments of \cite[Section 2.1]{Dav}, we can prove
that $\overline{(A,a,c)}$ is a monoidal category with tensor product
$\otimes$ and associativity constraint $\alpha_{A^{\circledast
m},A^{\circledast n},A^{\circledast s}}=a_{A^{\circledast
m},A^{\circledast s}}^n$.

\begin{prop}
Let $(a,c)$ be a quasi-Yang-Baxter operator on $A\in\CC$. There
exists a unique homomorphism of groups $\rho_n: \mathcal{B}_n\to
\Aut_\CC(A^{\circledast  n})$ sending the generator $\sigma_i$ of
$\mathcal{B}_n$ to $c_i$ for each $i=1,\ldots, n-1$.
\end{prop}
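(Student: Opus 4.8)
The plan is to recognize the operators $c_i$ as the braid operators associated with a genuine Yang-Baxter operator in the auxiliary monoidal category $\overline{(A,a,c)}$ constructed immediately above, and then to quote the standard fact \cite[Lemma XV.4.1]{Kas} that a Yang-Baxter operator always yields a braid group representation. Uniqueness is immediate, since $\mathcal{B}_n$ is generated by $\sigma_1,\dots,\sigma_{n-1}$. For existence it suffices, by the presentation \eqref{farcommutivity}--\eqref{braidrelation}, to verify that the automorphisms $c_1,\dots,c_{n-1}$ --- each a composite of $c$ and the isomorphisms $a_{A^{\circledast(i-1)},A}^{\pm 1}$ tensored with identities, hence genuine elements of $\Aut_\CC(A^{\circledast n})$ --- satisfy far commutativity and the braid relation.

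First I would check that $c$ is a Yang-Baxter operator on the object $A$ in $\overline{(A,a,c)}$, whose associativity constraint is $\alpha_{A^{\circledast m},A^{\circledast n},A^{\circledast s}}=a^{n}_{A^{\circledast m},A^{\circledast s}}$. Two verifications are needed. That $c$ is a morphism $A^{\circledast 2}\to A^{\circledast 2}$ in $\overline{(A,a,c)}$ is exactly the commutativity of \eqref{axi 2 quasi}, the defining condition for morphisms in the case $n=m=2$; applying the same to $c^{-1}$ shows $c\in\Aut_{\overline{(A,a,c)}}(A^{\circledast 2})$. That $c$ satisfies the Yang-Baxter equation \eqref{YBoper} in $\overline{(A,a,c)}$ reduces, after substituting $\alpha_{A,A,A}=a^1_{A,A}=a_{A,A}$, to precisely the hypothesis \eqref{axi 1 quasi}.

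The key remaining observation is that the defining formula for $c_i$ coincides term by term with the braid operator \eqref{rep from YBO} attached to $(A,c)$ computed inside $\overline{(A,a,c)}$, since there $\alpha_{A^{\circledast(i-1)},A,A}=a_{A^{\circledast(i-1)},A}$ and the trailing factors $\id_A^{\ot(n-i-1)}$ agree. Thus \cite[Lemma XV.4.1]{Kas} applies verbatim in $\overline{(A,a,c)}$ and produces the far-commutativity and braid relations among the $c_i$ as equalities of morphisms in $\overline{(A,a,c)}$; because composition and tensor product in $\overline{(A,a,c)}$ are those of $\CC$, these are equalities in $\Aut_\CC(A^{\circledast n})$, and the homomorphism $\rho_n$ exists.

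The step I expect to demand the most care is the passage justifying that $\overline{(A,a,c)}$ is a bona fide monoidal category with the stated associator and that $c$ genuinely lives in it --- i.e. that the self-naturality of $a$ together with \eqref{axi 2 quasi} furnishes exactly the coherence and naturality that \cite[Lemma XV.4.1]{Kas} invokes, in particular to slide $c$ past the rebracketing isomorphisms when deducing $c_ic_j=c_jc_i$ for $|i-j|\ge 2$. This is supplied by the construction preceding the statement (following \cite[Section 2.1]{Dav}); granting it, the proposition is a direct translation of the classical result. One could instead verify the two relations by hand from \eqref{axi 1 quasi}, \eqref{axi 2 quasi} and self-naturality, but this only reproves Kassel's lemma.
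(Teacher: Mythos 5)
Your proposal is correct and is essentially the paper's own proof: the paper likewise observes that $c$ is a Yang-Baxter operator on $A$ in the auxiliary category $\overline{(A,a,c)}$ and then invokes the homomorphism (\ref{morphism YB op}) (i.e.\ \cite[Lemma XV.4.1]{Kas}), and you in fact supply more of the verifications than the paper does (that $c$ is a morphism of $\overline{(A,a,c)}$ via (\ref{axi 2 quasi}), the Yang--Baxter equation via (\ref{axi 1 quasi}), and the term-by-term match of the $c_i$ with (\ref{rep from YBO})). The only step the paper includes that you pass over is the initial reduction to strict $\CC$ by coherence---needed because $\overline{(A,a,c)}$ was constructed only under the strictness assumption---but this is a routine one-line remark and does not affect the substance of your argument.
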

\begin{proof} We may assume without loss of generality that
$\CC$ is strict, since in the non-strict case by the coherence of $\CC$ every
object in
$\langle A\rangle$ is isomorphic to $A^{\circledast  n}$ by a unique
isomorphism constructed as composition and tensor products of the
constraint isomorphisms and their inverses.  By
hypothesis $c:A\otimes A\to A\otimes A$ is a Yang-Baxter operator in
$\overline{(A,a,c)}$, so the proposition follows from the group
morphisms (\ref{morphism YB op}).
\end{proof}

\subsection{Generalized Yang-Baxter Operators}
In this subsection we will describe a second approach to generalizing
Yang-Baxter operators based upon the generalized Yang-Baxter equation introduced
in \cite{RZWG}.

\begin{definition} Let $V$ be an object in a monoidal category
$(\CC,\ot,\alpha)$ and $k,m\in\N$ with $k>m$.  Set $I_m:=I_{V^{\circledast m}}$
and denote by $\alpha_{k,m}$ the natural isomorphism $V^{\circledast k}\ot
V^{\circledast m}\rightarrow V^{\circledast m}\ot V^{\circledast k}$ coming from
$\alpha$. For an automorphism $c\in\Aut(V^{\circledast  k})$ the
\textbf{$(k,m)$-generalized Yang-Baxter equation ($(k,m)$-gYBE}) on $V$ is
 \begin{equation}\label{gYBE}
  \alpha_{k,m}(c\ot I_m)\alpha^{-1}_{k,m}(I_m\ot c)\alpha_{k,m}(c\ot
I_m)=(I_m\ot c)\alpha_{k,m}(c\ot I_m)\alpha^{-1}_{k,m}(I_m\ot c)\alpha_{k,m}.
 \end{equation}

A solution $c$ to the $(k,m)$-gYBE is called a \textbf{$(k,m)$-gYB operator} on
$V$ if in addition $c$ satisfies the following equations for all $4\leq j$:
\begin{equation}
\label{gYBfarcomm}
 \alpha_{j}(c\ot I_{m(j-2)})\alpha^{-1}_{j}(I_{m(j-2)}\ot c)=(I_{m(j-2)}\ot
c)\alpha_{j}(c\ot I_{m(j-2)})\alpha_{j}^{-1}.
\end{equation} where $\alpha_j=\alpha_{k,m(j-2)}$.
\end{definition}

\begin{remark}
\begin{enumerate}
\item To verify that a solution $c$ to the $(k,m)$-gYBE is a $(k,m)$-gYB operator it is sufficient
to check eqn. (\ref{gYBfarcomm}) for $j<\frac{k}{m}+2$ since for $j\geq \frac{k}{m}+2$ eqn. the
operators $c_1$ and $c_j$ act nontrivially on disjoint tensor factors so (\ref{gYBfarcomm}) is automatic.
\item The requirement $k>m$ is to avoid trivialities in what follows: see
Prop. \ref{km-rep} below.
 \item A Yang-Baxter operator on $V$ in a category $\CC$ is a $(2,1)$-gYB
operator.
\item One may also define generalized \emph{quasi}-Yang-Baxter operators but we
shall have no need to do so.
\end{enumerate}
\end{remark}

\begin{definition}\label{generalized BVS}
A (\textbf{unitary}) \textbf{$(k,m)$-generalized braided vector
space} is a (unitary) $(k,m)$-gYB operator $c$ on a finite dimensional
(Hilbert) vector space $V\in$ Vec$_f$.
\end{definition}

As in Remark \ref{reps from YB operators}, in the case of a $(k,m)$-generalized
braided vector space $(V,c)$ we may suppress the $\alpha$ in eqns. (\ref{gYBE})
and (\ref{gYBfarcomm}) by choosing bases appropriately.
Define operators
\begin{equation}\label{rep from gYB oper}
 c^{k,m}_i=I_m^{\ot i-1}\ot c\ot I_m^{\ot n-i-1}
\end{equation}
for each $1\leq i\leq n-1$.
We have the following:
\begin{prop}\label{km-rep}
Suppose $(V,c)$ is a $(k,m)$-generalized braided vector space. The assignment
$\sigma_i\rightarrow c^{k,m}_i$ defines a group homomorphism for $n\geq 1$:
$$\rho^c:\B_n\rightarrow \Aut_\CC(V^{\ot k+m(n-2)}).$$
\end{prop}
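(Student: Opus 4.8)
The plan is to exploit the standard presentation of $\B_n$ by the generators $\sigma_1,\ldots,\sigma_{n-1}$ subject only to the far-commutativity relations (\ref{farcommutivity}) and the braid relations (\ref{braidrelation}). Each $c$ is an automorphism of $V^{\circledast k}$, and tensoring with identity maps preserves invertibility, so every $c^{k,m}_i$ already lies in $\Aut_\CC(V^{\ot k+m(n-2)})$; it therefore suffices to check that $\sigma_i\mapsto c^{k,m}_i$ respects these two families of relations. As in the remark following Definition \ref{generalized BVS}, I would first choose compatible bases so that every associativity constraint is the identity matrix, reducing (\ref{gYBE}) and (\ref{gYBfarcomm}) to their $\alpha$-free forms. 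The argument then becomes bookkeeping of which tensor slots each operator occupies.

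For the braid relation I would note that $c^{k,m}_i$ and $c^{k,m}_{i+1}$ share the left factor $I_m^{\ot i-1}$ and the right factor $I_m^{\ot n-i-2}$, so that
$$c^{k,m}_i=I_m^{\ot i-1}\ot(c\ot I_m)\ot I_m^{\ot n-i-2},\qquad c^{k,m}_{i+1}=I_m^{\ot i-1}\ot(I_m\ot c)\ot I_m^{\ot n-i-2},$$
with $c\ot I_m$ and $I_m\ot c$ acting on the middle block $V^{\ot k+m}$. Products of such operators multiply blockwise, so the two words $c^{k,m}_ic^{k,m}_{i+1}c^{k,m}_i$ and $c^{k,m}_{i+1}c^{k,m}_ic^{k,m}_{i+1}$ reduce to the middle-block identity $(c\ot I_m)(I_m\ot c)(c\ot I_m)=(I_m\ot c)(c\ot I_m)(I_m\ot c)$, which is exactly the $\alpha$-free form of the $(k,m)$-gYBE (\ref{gYBE}), i.e.\ the case $n=3$.

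Far commutativity is handled identically but invokes (\ref{gYBfarcomm}). For $i<i'$ with $i'-i\geq 2$ I would factor out the common prefix $I_m^{\ot i-1}$ and suffix $I_m^{\ot n-i'-1}$, leaving the middle operators $c\ot I_{m(i'-i)}$ and $I_{m(i'-i)}\ot c$ on $V^{\ot k+m(i'-i)}$. Putting $j=i'-i+2\geq 4$, these are precisely the two sides paired in (\ref{gYBfarcomm}), hence commute, and reattaching the common identities yields $c^{k,m}_ic^{k,m}_{i'}=c^{k,m}_{i'}c^{k,m}_i$. The boundary cases $n=1,2$ are immediate, $\B_1$ being trivial and $\B_2$ free of rank one.

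I expect the only step with genuine content — as opposed to indexing bookkeeping — to be far commutativity in the \emph{overlapping} regime $2\leq i'-i$ with $m(i'-i)<k$, where the supports of $c^{k,m}_i$ and $c^{k,m}_{i'}$ actually intersect. This is exactly the case that forces the auxiliary hypothesis (\ref{gYBfarcomm}) into the definition of a $(k,m)$-gYB operator; in the complementary regime $m(i'-i)\geq k$ the two operators occupy disjoint slots and commute with no hypothesis at all, as noted in the remark. Once (\ref{gYBfarcomm}) is granted, the blockwise reduction makes both relations routine.
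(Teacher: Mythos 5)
Your proposal is correct and takes essentially the same approach as the paper: after stripping the common identity factors, the adjacent braid relation reduces to the $\alpha$-free form of the $(k,m)$-gYBE (\ref{gYBE}), and far commutativity for $i'-i\geq 2$ reduces to (\ref{gYBfarcomm}) with $j=i'-i+2$. The paper's proof states this same blockwise reduction more tersely, and your extra bookkeeping (invertibility of the $c^{k,m}_i$, the disjoint-support regime $m(i'-i)\geq k$, the cases $n=1,2$) merely fills in details rather than changing the argument.
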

\begin{proof}
Eqn. (\ref{gYBE}) implies that
$c^{k,m}_1c^{k,m}_2c^{k,m}_1=c^{k,m}_2c^{k,m}_1c^{k,m}_2$ and it
follows that $c^{k,m}_i$ and $c^{k,m}_{i+1}$ satisfy relation
(\ref{braidrelation}). For relation (\ref{farcommutivity}) it is
enough to check that $c^{k,m}_1$ commutes with $c^{k,m}_3,\ldots,
c^{k,m}_{n-1}$, which is  eqn. (\ref{gYBfarcomm}).
\end{proof}

\begin{remark}
 A result analogous to Prop. \ref{km-rep} for $(k,m)$-gYB operators on objects
in arbitrary monoidal categories $\CC$ can be proved with essentially no change
except to eqn. (\ref{rep from gYB oper}).
\end{remark}

\begin{ex} Set $J=\begin{pmatrix}
0&0&0&1\\0&0&1&0\\0&1&0&0\\1&0&0&0\end{pmatrix}$.  In \cite{RZWG} it is shown
that
the matrix
$R=\frac{1}{\sqrt{2}}\begin{pmatrix} I & J\\-J&I\end{pmatrix}$
 yields a unitary $(3,2)$-generalized braided vector space $(\C^2,R)$ (where $I$
represents the $4\times 4$ identity matrix).
\end{ex}

\section{Localizations}\label{locs}

In this section we give some variations on the notion of localization in
somewhat more flexible settings.  Although the categories we have in mind are
typically highly structured (abelian, semisimple etc.) and confer significant
structure on the associated braid representations, we discard as much of these
restrictions as is possible.

\subsection{Sequences of algebras under $\C\B$}

\begin{definition}
A $\C$-category (resp. a $C^*$-category) $\A$ shall be called
\textbf{diagonal} if:
\begin{enumerate}
  \item Obj$(\A)=\mathbb N$ (we shall denote by $[n]$ the object
  corresponding to the natural $n$),
  \item for all $n, m\in \N, m\neq n$, $\Hom_\A([m],[n])=0$.
\end{enumerate}

\end{definition}

To specify a diagonal category $\A$ it is enough to
describe the algebras $\End_\A([n])$ for each $n$. We shall denote
by $\C\B$ the diagonal $\C$-linear $*$-category where
$\End_{\C\B}([n])=\C\B_n$ for all $n\in \N$.  Observe that this is
just the linearization of the category $Braid$ that appears in
\cite{JS} (as a $*$-category).

\begin{definition}
\begin{enumerate}
  \item Let $\A$ and $\D$ be diagonal categories. A morphism $\F:\A\to \D$
from $\A$ to $\D$ is a $\C$-linear functor $F:\A\to \D$ such that
$\F([n])=[n]$ for all $n\in \N$.

\item A \textbf{diagonal category under $\C\B$} is a pair $(\A,\rho_\A)$ where
$\A$ is a diagonal category and $\rho_\A:\C\B\to \A$ is a morphism of
diagonal categories.

\end{enumerate}
\end{definition}
To specify a morphism $\F:\A\to\D$ of
diagonal categories one need only describe
$\F:\End_\A([n])\to\End_\D([n])$ for each $n$, so that a diagonal
category under $\C\B$ is a family of $\C$-algebra representations
$\rho_\A:\C\B_n\to\End_\A([n])$.

In \cite{TurWen97} tensor representations $F:Tang\rightarrow$ Vec$_f$ of the
tangle category $Tang$ are defined as covariant tensor functors such that
$F([n])=V^{\ot n}$ for some $V\in$Vec$_f$.  Since $\C\B$ is a subcategory of
(the $\C$-linearization of) $Tang$ such a tensor representation gives rise to a
diagonal category under $\C\B$.

\begin{nota} If $(\A,\rho_\A)$ is a diagonal category under $\C\B$, we shall
denote by $(\underline{\A},\rho_\A)$ the diagonal category under
$\C\B$ given by $\End_{\underline{\A}}([n]):= \rho_\A(\C\B_n)\subset
\End_\A([n])$, and we shall denote by $\underline{\A}_{+1}$, the
diagonal category under $\C\B$ where
$\Hom_{\underline{\A}_{+1}}([n],[m])=\Hom_{\underline{\A}}([n+1],[m+1])$,
for all $n\in \N$.
\end{nota}

\begin{definition}
Let $\A$ be a diagonal category.  A \textbf{representation of $\A$} is an
indexed family $(\vartheta_n,V_n)$ ($n\geq 1$) of representations $\vartheta_n:
\End_\A([n])\rightarrow \End_\C(V_n)$.
\end{definition}
If $(\vartheta_n,V_n)$ is a representation of a diagonal category $(\A,\rho_\A)$
under $\C\B$ then we obtain another diagonal category
$(\vartheta(\A),\vartheta\circ \rho_\A)$ under $\C\B$ by setting
$\End_{\vartheta(\A)}([n]):=\End_\C(V_n)$ and
$$\vartheta(\rho_\A(\C\B_n)):=\vartheta_n\circ\rho_\A(\C\B_n)\subset\End_\C(V_n)
=\End_{\vartheta(\A)}([n]).$$

The following is a (realization-independent) replacement of Definition
\ref{seq}:
\begin{definition}\label{seq of algebras}
A \textbf{sequence of algebras under $\C\B$} is a triple
$(\A,\rho_\A,\iota_\A)$, where $(\A,\rho_\A)$ is a diagonal
$\C$-category under $\C\B$ and $\iota_\A:\underline{\A}\to
\underline{\A}_{+1}$ is a faithful morphism such that for all $n\in
\N$ the following diagram commutes:
$$\xymatrix{ \C\B_n\ar[r]\ar@{^{(}->}[d]^\iota &
\C\rho_\A(\B_n)\ar@{^{(}->}[d]^{\iota_\A} \\
 \C\B_{n+1}\ar[r] & \C\rho_{\A}(\B_{n+1})}$$

A sequence of algebras under $\C\B$ is called \textbf{unitary} if $\A$ is a
$C^*$-category and the morphisms $\rho_A$ and $\iota$ are $*$-functors.
\end{definition}
When the maps $\rho_\A$ and $\iota_\A$ are clear from the context we will often
just write $\A$ for a sequence of algebras under $\C\B$.

\begin{remarks}\label{seq of alg rems}
\begin{enumerate}
\item Clearly if $(\A,\rho_\A,\iota_\A)$ is a sequence of algebras under $\C\B$
then so is $(\underline{\A},\rho_\A,\iota_\A)$.
\item Suppose $(\A,\rho_\A,\iota_\A)$ is a sequence of algebras under $\C\B$ and
$(\vartheta_n,V_n)$ is a \textit{faithful}
  representation of $(\underline{\A},\rho_\A)$ so that
$\vartheta_n:\End_{\underline{\A}}([n])\cong
\End_{\underline{\vartheta(\A)}}([n])$ is invertible for all $n$.  Then
$(\vartheta(\A),\vartheta\circ \rho_\A,\vartheta_{n+1}\circ \iota_\A\circ
\vartheta_n^{-1})$ is a sequence of algebras under $\C\B$. Moreover, setting
$\rho_n=\vartheta_n\circ\rho_\A:\C\B_n\to\End_\C(V_n)$ we obtain a sequence of
braid representations $(\rho_n,V_n)$ (in the sense of Definition \ref{seq}) with
injective algebra homomorphisms
$$\tau_n=\vartheta_{n+1}\iota_\A\vartheta_n^{-1}
:\rho_n(\C\B_n)\hookrightarrow\rho_{n+1}(\C\B_{n+1})$$ satisfying
$\tau_n\circ\rho_n=\rho_{n+1}\circ\iota$.
  \item A sequence of braid representations $(\rho_n, V_n)$ as in Definition
\ref{seq} gives rise to a sequence of
algebras under $\C\B$ as follows: Let $\mS=\mS(\rho_n,V_n)$ denote the diagonal
category with $\End_\mS([n]):=\End_\C(V_n)$ and define
$\rho_\mS=\rho_n:\C\B_n\to\End_\mS([n])$.  The injective algebra maps $\tau_n$
in Definition \ref{seq} give us
$$\iota_\mS=\tau_n:\C\rho_n(\B_n)=\End_{\underline{\mS}}([n])\to\End_{\underline
{\mS}}([n+1])=\C\rho_{n+1}(\B_{n+1})$$ so that
$(\mS,\rho_\mS,\iota_\mS)$ is a sequence of algebras under $\C\B$, which is
unitary if and only if the $\B_n$-representation $V_n$ is unitary for all $n$.

\end{enumerate}

\end{remarks}

\subsection{Key examples}

Sequences of algebras under $\C\B$ arise naturally in a number of settings.  We
single out some important examples for later use.

\begin{ex}\label{YB algebras ex}
Let $\CC$ be a monoidal $\C$-linear category. Given a Yang-Baxter
operator $c$ on $V\in\CC$, let
$(YB_{(V,c)},\rho^{(V,c)},\iota)$ be the sequence of
algebras under $\C \B$ defined as follows:
\begin{enumerate}
\item $\End_{YB_{(V,c)}}([n])= \End_\CC(V^{\circledast n})$,
\item $\iota:\End_{YB_{(V,c)}}([n])\rightarrow \End_{YB_{(V,c)}}([n+1])$ is
defined by $\iota(f)= f\otimes\id_{V}$ and
\item $\rho^{(V,c)}_n: \C\B_n \to \End_{YB_{(V,c)}}([n])$ is defined by
$\rho^{(V,c)}_n(\sigma_i)=c_i$ where $c_i$ is as in eqn (\ref{rep from YBO}).
\end{enumerate}
\end{ex}

If $H$ is a quasi-triangular (quasi-)Hopf algebra and $V$ is any $H$-module then
$c$ is a
Yang-Baxter operator on $V$ in the monoidal category $\Rep(H)$.  Thus
$YB_{(V,c)}$ has the structure of a sequence of algebras under
$\C\B$.

\begin{ex}\label{bfc ex}
If $\CC$ is a braided fusion category with simple objects
$\OO(\CC)=\{\one=X_0,\ldots,X_{k-1}\}$ and $X\in\OO(\CC)$ then
the braiding $c$ is a Yang-Baxter operator on $X$.  The semisimplicity of $\CC$
implies that the sequence of algebras
$(YB_{(X,c)},\rho^{(X,c)},\iota)$ under $\C\B$ has a faithful
representation: $(\vartheta_n,W_n^X)$ where
$W_n^X:=\bigoplus_i\Hom_\CC(X_i,X^{\ot n})$ is the minimal faithful
$\End_\CC(X^{\ot n})$-module.  This gives rise to the sequence of
$\B_n$-representations of interest in \cite{RW}.

\end{ex}

\begin{ex}\label{braid quotient ex}
 Naturally $\C\B$ itself has the structure of a sequence of algebra under
$\C\B$.  One may construct more sequences of algebras under $\C\B$ as quotients
of $\C\B$ provided certain compatibility constraints are satisfied.  For example
we can define a sequence of algebras $\TL_0(q,\ell)$ under $\C\B$ from the
Temperley-Lieb algebras by setting
$\End_{\TL_0(q,\ell)}([n])=TL_n(q)/Ann(\tr_\ell)$ where $q=e^{2\pi i/\ell}$ and
$Ann(\tr_\ell)$ is the annihilator of the associated (Markov) trace form on
$TL_n(q)$.  The maps $\rho_{\TL_0(q,\ell)}$ and $\iota_{\TL_0(q,\ell)}$ come
from the compatibility of the surjection $\C\B_n\twoheadrightarrow
TL_n(q)/Ann(\tr_\ell)$ with $\iota:\B_n\to\B_{n+1}$.
  The sequence of algebras $\TL_0(q,\ell)$ is equivalent to the sequence
$(YB_{(X,c)},\rho^{(X,c)},\iota)$ obtained from the generating simple object $X$
(analogous to the vector representation of $\ssl_2$) in the braided fusion
category $\CC(\ssl_2,\ell)$.  Indeed, $\End_{\CC(\ssl_2,\ell)}(X^{\ot n})\cong
TL_n(q)/Ann(\tr_\ell)$.

\end{ex}

\subsection{$\CC$-localization}

\begin{definition}\label{c-loc}
Let $\A$ be a sequence of algebras under $\C\B$ and $\CC$ be a
monoidal $\C$-category. A \textbf{$\CC$-localization} of $\A$ is a
Yang-Baxter operator $c$ on $V\in\CC$ and a faithful morphism
 $\phi: \underline{\A}\to YB_{(V,c)}$ such that $\phi\circ
\rho_\A= \rho^{(V,c)}$.

A \textbf{unitary $\CC$-localization} of a unitary sequence of
algebras $\A$ under $\C\B$ is a $\CC$-localization $c$ on $V$ in a
$C^*$-tensor category $\CC$, such that $c$ is a unitary
isomorphism and $\phi$ is a $*$-functor.
\end{definition}

The sequences of algebras under $\C\B$ constructed from Yang-Baxter
operators in a category $\CC$ obviously are localizable over $\CC$
using the same Yang-Baxter operator. This localization shall be
called the \textbf{trivial localization} and this kind of
localizations are not relevant for us.

The following result gives some non-trivial localizations for
sequences of algebras associated to Yang-Baxter operators.

\begin{prop}\label{induced localization}
Let $\CC, \D$ be monoidal $\C$-linear categories and $F:\CC\to \D$ a
faithful strict monoidal functor. Then for every Yang-Baxter
operator $(V,\sigma)$  the pair $(F(V),F(\sigma))$ is Yang-Baxter
operator and defines a $\D$-localization of $YB_{(V,\sigma)}$.
\end{prop}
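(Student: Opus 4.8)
The plan is to establish the two assertions in turn, transporting everything from $\CC$ to $\D$ by functoriality and strictness of $F$.

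First I would check that $(F(V),F(\sigma))$ is a Yang-Baxter operator. Strictness gives $F(V\otimes V)=F(V)\otimes F(V)$, and since $\sigma$ is invertible, $F(\sigma)F(\sigma^{-1})=F(\id)=\id=F(\sigma^{-1})F(\sigma)$, so $F(\sigma)\in\Aut(F(V)\otimes F(V))$. Applying $F$ to the Yang-Baxter equation (\ref{YBoper}) for $\sigma$ and using that $F$ is a strict monoidal functor---so that $F$ commutes with $\otimes$, fixes identities, and satisfies $F(\alpha_{V,V,V})=\alpha_{F(V),F(V),F(V)}$---sends each factor of the left-hand side to the corresponding factor of (\ref{YBoper}) written for $F(\sigma)$, and likewise on the right. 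Hence $F(\sigma)$ satisfies (\ref{YBoper}), so $(F(V),F(\sigma))$ is a Yang-Baxter operator on $F(V)\in\D$, and by Example \ref{YB algebras ex} the sequence $YB_{(F(V),F(\sigma))}$ under $\C\B$ is defined.

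Next I would construct the localizing morphism. Strictness gives $F(V^{\circledast n})=(F(V))^{\circledast n}$ for all $n$, so restricting $F$ to the subalgebra $\End_{\underline{YB_{(V,\sigma)}}}([n])=\rho^{(V,\sigma)}_n(\C\B_n)\subseteq\End_\CC(V^{\circledast n})$ and composing with this identification yields an algebra homomorphism $\phi_n:\rho^{(V,\sigma)}_n(\C\B_n)\to\End_\D((F(V))^{\circledast n})=\End_{YB_{(F(V),F(\sigma))}}([n])$, $f\mapsto F(f)$. Being a restriction of the $\C$-linear functor $F$, the family $\phi=\{\phi_n\}$ is $\C$-linear, is the identity on objects, and respects composition, hence is a morphism of diagonal categories $\phi:\underline{YB_{(V,\sigma)}}\to YB_{(F(V),F(\sigma))}$.

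It then remains to verify the two defining conditions of a $\D$-localization. For the intertwining relation $\phi\circ\rho^{(V,\sigma)}=\rho^{(F(V),F(\sigma))}$ I would apply $F$ to the formula (\ref{rep from YBO}) for $c_i=\rho^{(V,\sigma)}_n(\sigma_i)$; strictness pushes $F$ through the tensor products, the identity morphisms, and the associators $\alpha_{V^{\circledast(i-1)},V,V}$, and since $F(\sigma)$ occupies the middle slot the result is exactly the operator built from $F(\sigma)$ via (\ref{rep from YBO}), namely $\rho^{(F(V),F(\sigma))}_n(\sigma_i)$. Faithfulness of $\phi$ is immediate: $F$ is faithful, hence injective on $\End_\CC(V^{\circledast n})$ and therefore on the subalgebra $\rho^{(V,\sigma)}_n(\C\B_n)$. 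The argument is essentially functorial bookkeeping, and I do not expect a genuine obstacle. The one point demanding care is the disciplined use of strictness: one must confirm that ``strict monoidal functor'' supplies both the object-level equalities $F(V^{\circledast n})=(F(V))^{\circledast n}$ and the morphism-level equality $F(\alpha_{V,V,V})=\alpha_{F(V),F(V),F(V)}$, so that applying $F$ to (\ref{YBoper}) and to (\ref{rep from YBO}) reproduces precisely the corresponding expressions for $F(\sigma)$, with no leftover coherence isomorphisms.
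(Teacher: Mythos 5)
Your proof is correct, and it is precisely the routine functoriality argument the paper has in mind: the authors state Proposition \ref{induced localization} without proof, treating it as immediate, and your verification (strictness forces $F(V^{\circledast n})=F(V)^{\circledast n}$ and $F(\alpha_{V,V,V})=\alpha_{F(V),F(V),F(V)}$, so applying $F$ to (\ref{YBoper}) and (\ref{rep from YBO}) yields the corresponding data for $F(\sigma)$, with faithfulness of $F$ giving faithfulness of $\phi$) is exactly the omitted bookkeeping. Your attention to the point that a strict monoidal functor carries associators to associators is the one nontrivial detail, and you handle it correctly.
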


That Definition \ref{c-loc} is a generalization of Definition \ref{localdef} is
demonstrated in:
\begin{prop}\label{seq braid rep prop}
Let $(\rho_n,V_n)$ be a sequence of $\B_n$-representations (in the
sense of Definition \ref{seq}) and $(\mS,\rho_\mS,\iota_\mS)$ the associated
sequence of algebras under $\C\B$ as in Remarks \ref{seq of alg rems}. Then
$(\rho_n,V_n)$ is
localizable in sense of Definition \ref{localdef} if and only if
$(\mS,\rho_\mS,\iota_\mS)$ is Vec$_f$-localizable.
\end{prop}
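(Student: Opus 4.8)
The plan is to unwind both notions of localizability and observe that, once the two bookkeeping conventions are matched, they encode literally the same data. First I would recall what a Vec$_f$-localization of $(\mS,\rho_\mS,\iota_\mS)$ amounts to: a braided vector space $(V,c)$ together with a faithful morphism of diagonal categories $\phi:\underline{\mS}\to YB_{(V,c)}$ satisfying $\phi\circ\rho_\mS=\rho^{(V,c)}$. Since a morphism of diagonal categories is completely determined by its components on endomorphism algebras, and since $\End_{\underline{\mS}}([n])=\rho_n(\C\B_n)=\C\rho_n(\B_n)$ while $\End_{YB_{(V,c)}}([n])=\End_{\mathrm{Vec}_f}(V^{\circledast n})$, such a $\phi$ is exactly a family of injective algebra homomorphisms $\phi_n:\C\rho_n(\B_n)\to\End_{\mathrm{Vec}_f}(V^{\circledast n})$ with $\phi_n\circ\rho_n=\rho^{(V,c)}_n$.

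The one point demanding care is the identification of this data with the data of Definition \ref{localdef}, and here I would invoke Remark \ref{reps from YB operators}. In Vec$_f$, choosing compatible bases renders every associativity isomorphism $\alpha_{V^{\circledast(i-1)},V,V}$ the identity matrix, so $V^{\circledast n}$ is identified with $V^{\ot n}$ and the Yang-Baxter representation $\rho^{(V,c)}_n$ becomes precisely the local matrix representation $\sigma_i\mapsto I^{\ot i-1}\ot c\ot I^{\ot n-i-1}$ on $V^{\ot n}$. Under this identification $\End_{\mathrm{Vec}_f}(V^{\circledast n})=\End(V^{\ot n})$, and $\rho^{(V,c)}_n$ coincides with the representation $\rho^{(W,R)}_n$ appearing in Definition \ref{localdef} for $(W,R)=(V,c)$.

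With this in hand both implications become immediate. For the forward direction, given a localization $(W,R)$ with injective algebra homomorphisms $\phi_n:\C\rho_n(\B_n)\to\End(W^{\ot n})$ satisfying $\phi_n\circ\rho_n=\rho^{(W,R)}_n$, I would take $(V,c)=(W,R)$ and assemble the $\phi_n$ into a morphism $\phi:\underline{\mS}\to YB_{(W,R)}$: faithfulness of $\phi$ is the injectivity of each $\phi_n$, while $\phi\circ\rho_\mS=\rho^{(W,R)}$ is exactly the intertwining relation, so this is a Vec$_f$-localization of $\mS$. Conversely, given a Vec$_f$-localization $(V,c,\phi)$, I would read off the components $\phi_n:\C\rho_n(\B_n)\to\End(V^{\ot n})$ and use the identification above to recognize them, with $(W,R)=(V,c)$, as precisely a localization of $(\rho_n,V_n)$ in the sense of Definition \ref{localdef}.

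Since the whole argument is a translation between two realizations of a single piece of data, I do not expect a genuine obstacle; the only step requiring attention is the bookkeeping identification of $V^{\circledast n}$ with $V^{\ot n}$ and of the two forms of the braid representation, supplied by Remark \ref{reps from YB operators}. It is worth noting explicitly that the structure maps $\iota_\mS$ play no role here, as neither notion of localizability imposes any compatibility with them, and that the cases $n=0,1$ are vacuous because $\B_0$ and $\B_1$ are trivial.
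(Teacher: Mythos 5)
Your proposal is correct and follows essentially the same route as the paper's proof: both directions are a direct unwinding of Definition \ref{localdef} against Definition \ref{c-loc}, matching the components $\phi_n:\C\rho_n(\B_n)\to\End(V^{\ot n})$ of a morphism $\underline{\mS}\to YB_{(V,c)}$ with the injective algebra maps of a localization. The only difference is cosmetic: you make explicit, via Remark \ref{reps from YB operators}, the identification $\End_{\mathrm{Vec}_f}(V^{\circledast n})=\End(V^{\ot n})$ and of $\rho^{(V,c)}_n$ with the local matrix representation, which the paper's proof uses silently when it writes $\End_{YB_{(V,c)}}([n])=\End(V^{\ot n})$.
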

\begin{proof} First suppose that the sequence of $\B_n$-representations
$(\rho_n,V_n)$
has localization $(V,c)$ as in Definition \ref{localdef}.  Then there
are injective maps
$$\phi_n:\End_{\underline{\mS}}([n])=\C\rho_n(\B_n) \to \End(V^{\ot
n})=\End_{YB_{(V,c)}}([n])$$ such that
$\phi_n\circ \rho_\mS=\rho^{(V,c)}_n$ for all $n\in \N$.  Thus $\phi_n$ defines
a faithful morphism $\phi:\underline{\mS}\to YB_{(V,c)}$ such that $\phi\circ
\rho_\mS=\rho^{(V,c)}$, \emph{i.e.}, a Vec$_f$-localization of
$(\mS,\rho_\mS,\iota_\mS)$.

Conversely, a Vec$_f$-localization of $(\mS,\rho_\mS,\iota_\mS)$ is a braided
vector space $(V^\prime,c^\prime)$ and a faithful morphism
$\phi:\underline{\mS}\to YB_{(V^\prime,c^\prime)}$
such that $\phi\circ \rho_\mS=\rho^{(V^\prime,c^\prime)}$, so
$\phi_n:\End_{\underline{\mS}}=\C\rho_n(\B_n)
\to \End_{YB_{(V^\prime,c^\prime)}}=\End({V^\prime}^{\ot n})$ define a
localization of $(\rho_n,V_n)$ in the sense of Definition \ref{localdef}.
\end{proof}

This motivates the following short-hand notation:
\begin{definition}
 A Vec$_f$-localization of $\A$ will be called a \textbf{localization} and a
(unitary) $Hilb_f$-localization of $\A$ will be called a \textbf{unitary
localization}.
\end{definition}
If $(V,c)$ is a localization of $\A$ we will refer to $\dim(V)$ as the
\textbf{dimension}
of the localization.
\begin{corollary}\label{cor faithful}
Let $(\A,\rho_\A,\iota_\A)$ be a localizable sequence of algebras under $\C\B$.
Then for any \textit{faithful} representation
$(\vartheta_n,V_n)$ of $\A$  the sequence of $\B_n$-representations
$(\vartheta_n\circ\rho_\A,V_n)$ is localizable in the sense of Definition
\ref{localdef}.
\end{corollary}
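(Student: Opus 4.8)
The plan is to transport the given Vec$_f$-localization of $\A$ across the faithful representation $(\vartheta_n,V_n)$; the whole point is that faithfulness makes $\vartheta_n$ invertible on the braid subalgebra, so that the intertwining relation defining a localization can be carried over verbatim.

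First I would unwind the hypothesis. By the definition of localization following Proposition \ref{seq braid rep prop}, localizability of $\A$ supplies a braided vector space $(V,c)$ and a faithful morphism $\phi:\underline{\A}\to YB_{(V,c)}$ with $\phi\circ\rho_\A=\rho^{(V,c)}$. Concretely this amounts to a family of injective algebra maps $\phi_n:\End_{\underline{\A}}([n])=\rho_\A(\C\B_n)\to\End(V^{\ot n})$ satisfying $\phi_n\circ\rho_\A=\rho^{(V,c)}_n$ for every $n$.

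Next I would invoke Remark \ref{seq of alg rems}(2): because $(\vartheta_n,V_n)$ is faithful, each $\vartheta_n$ restricts to an isomorphism $\vartheta_n:\rho_\A(\C\B_n)\xrightarrow{\sim}\C\rho_n(\B_n)$ onto its image, where $\rho_n:=\vartheta_n\circ\rho_\A$, and moreover $(\rho_n,V_n)$ is a genuine sequence of braid representations in the sense of Definition \ref{seq}. Setting $\psi_n:=\phi_n\circ\vartheta_n^{-1}:\C\rho_n(\B_n)\to\End(V^{\ot n})$ then produces injective algebra maps (a composite of an isomorphism with an injection), and a one-line check gives $\psi_n\circ\rho_n=\phi_n\circ\vartheta_n^{-1}\circ\vartheta_n\circ\rho_\A=\phi_n\circ\rho_\A=\rho^{(V,c)}_n$. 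Thus $(V,c)$ together with the $\psi_n$ is precisely a localization of $(\rho_n,V_n)$ in the sense of Definition \ref{localdef}. Equivalently and more functorially, $\psi:=\phi\circ\vartheta^{-1}$ is a faithful morphism exhibiting the associated sequence of algebras $\mS(\rho_n,V_n)$ as Vec$_f$-localizable, whence Proposition \ref{seq braid rep prop} yields the claim.

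Since the argument is essentially bookkeeping there is no deep obstacle, but one point genuinely requires care and is exactly where the hypothesis enters: the invertibility of $\vartheta_n$ on $\rho_\A(\C\B_n)$. Without faithfulness $\vartheta_n$ might collapse part of the braid subalgebra, leaving no way to pull $\phi_n$ back through it; so the only real task is to restrict to the subalgebra $\End_{\underline{\A}}([n])$ before inverting, rather than attempting to invert $\vartheta_n$ on all of $\End_\A([n])$.
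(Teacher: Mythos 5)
Your proposal is correct and matches the paper's argument: the paper states the corollary with no written proof (marking it \qed as immediate) and then, in the example immediately following, exhibits exactly your construction, defining $\Phi_n=\phi\circ(\vartheta_n\mid_{\rho_\A(\C\B_n)})^{-1}$ using faithfulness to invert $\vartheta_n$ on the braid subalgebra $\End_{\underline{\A}}([n])$. You also correctly flag the one point of care — inverting only on $\rho_\A(\C\B_n)$ rather than on all of $\End_\A([n])$ — which is precisely where the paper's restriction $\vartheta_n\mid_{\rho^{(X,c)}(\C\B_n)}$ enters.
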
\qed

We illustrate this result with the following example.
\begin{ex}
 Consider the sequence $(YB_{(X,c)},\rho^{(X,c)},\iota)$ constructed from an
object in a braided fusion category $\CC$ as in Example \ref{bfc ex}.  The
faithful representation $(\vartheta_n,W_n^X)$ of $YB_{(X,c)}$ yields a sequence
of $\B_n$-representations $(\vartheta_n\circ\rho^{(X,c)},W_n^X)$.  If
$YB_{(X,c)}$ has localization $(V,R)$ with faithful morphism
$\phi:\underline{YB_{(X,c)}}\to\End_\C(V^{\ot n})$ satisfying
$\phi\circ\rho^{(X,c)}=\rho^{(V,R)}$ then defining $\Phi_n=\phi\circ
(\vartheta_n\mid_{\rho^{(X,c)}(\C\B_n)})^{-1}$ gives us the required algebra
injections
$(\vartheta_n\circ\rho^{(X,c)})(\C\B_n)\stackrel{\Phi_n}{\to}\End_\C(V^{\ot n})$
so that $(\vartheta_n\circ\rho^{(X,c)},W_n^X)$ is localizable.
\end{ex}

%\begin{corollary}
%Let $H$ be a  bialgebra and $(V,R)$ a Yang-Baxter operator in the
%category of $H$-comodules, then the sequence of
%$\C\B$-representations $(\End_H(V^{\ot n}))_{n\in\N}$ has a localization in
%Vec$_f$.
%\end{corollary}

\subsection{Quasi-localization}

\begin{definition}

Let $\A$ be a sequence of algebras under $\C\B$. A \textbf{quasi-localization}
of $\A$ is a $\overline{(V,a,c)}$-localization where $(V,a,c)$ is a
quasi-braided vector space.

\end{definition}
Again, $\dim(V)$ will be the \textbf{dimension} of the quasi-localization
$(V,a,c)$.

\begin{prop}\label{localization of quasi-Hopf and Hopf}
Let $H$ be a quasi-triangular (resp. quasi-Kac algebra) quasi-Hopf
algebra and $V$ an (resp. unitary) $H$-module.  Then the sequence of
(resp. unitary) algebras $YB_{(V,c)}$ under $\C\B$ has a (resp.
unitary) quasi-localization of dimension $\dim(V)$.  Moreover, if
$H$ is a (resp. Kac algebra) Hopf algebra, the sequence of (resp.
unitary) algebras $YB_{(V,c)}$ under $\C\B$ has a (resp. unitary)
localization of dimension $\dim(V)$.
\end{prop}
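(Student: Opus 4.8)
The plan is to produce the quasi-localization as a localization induced by the forgetful functor, via Proposition \ref{induced localization}. By Example \ref{exam quasi hopf quasi braid}, the underlying vector space of $V$ carries a quasi-braided vector space structure $(V,a,c)$, where $a_{X,Y}$ is the linear action of the associator $\Phi$ and $c(v\otimes v')=(R(v\otimes v'))_{21}$; this yields the monoidal category $\overline{(V,a,c)}$, whose associativity constraints are the maps $a^n$. The construction of $\overline{(V,a,c)}$ is tailored precisely so that it is the target of the forgetful functor restricted to $\langle V\rangle\subseteq\Rep(H)$, and this is the structure I would exploit.

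First I would show that the forgetful functor $F:\langle V\rangle\to\overline{(V,a,c)}$ is faithful and strict monoidal. On objects $F(V^{\circledast n})=V^{\circledast n}$ and on morphisms $F$ takes an $H$-module map to its underlying linear map, so faithfulness is immediate. For strict monoidality the key computation is that $F$ sends the associativity constraint $\alpha_{X,V^{\circledast n},Y}$ of $\Rep(H)$—the $H$-linear action of $\Phi$ regrouping the three blocks—to the constraint $a^n_{X,Y}$ of $\overline{(V,a,c)}$; an induction on $n$ using the defining pentagon of $a^n$ identifies $a^n_{X,Y}$ with the linear action of $\Phi$ regrouping $X$, $V^{\circledast n}$ and $Y$, so the two constraints literally coincide. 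I must also check that $F$ actually lands in $\overline{(V,a,c)}$: every $H$-module map $f\colon V^{\circledast n}\to V^{\circledast m}$ satisfies the morphism condition defining $\overline{(V,a,c)}$, i.e. it commutes with the $a$-constraints. This holds because $a^n$ is the action of $\Phi\in H^{\otimes 3}$ on the three blocks and $f$ is $H$-linear, so $f$ commutes with the action of any element of $H^{\otimes 3}$. With $F$ in hand, Proposition \ref{induced localization} gives that $(F(V),F(c))=(V,c)$ is a Yang-Baxter operator in $\overline{(V,a,c)}$ defining a $\overline{(V,a,c)}$-localization of $YB_{(V,c)}$, which is by definition a quasi-localization of dimension $\dim(V)$.

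The main obstacle is the bookkeeping of these two verifications—matching the inductively-defined $a^n$ with the iterated $\Phi$-action and confirming the morphism condition—which rests on the pentagon coherence for $\Phi$ together with $H$-linearity; this is routine but must be carried out with care about the parenthesization. Everything else is formal. In the quasi-Kac case with $V$ unitary, the braiding $c$ and the constraints $a_{X,Y}$ act unitarily, $\overline{(V,a,c)}$ is a $C^*$-tensor category and $F$ is a $*$-functor, so the construction yields a \emph{unitary} quasi-localization. Finally, if $H$ is a genuine Hopf algebra, so that $\Phi=1\otimes 1\otimes 1$, then $a=\id$ and $\overline{(V,a,c)}=\text{Vec}_f$ (respectively $\Hilb_f$ in the Kac case), whence the quasi-localization is an ordinary (respectively unitary) localization, again of dimension $\dim(V)$.
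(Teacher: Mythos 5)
Your proposal is correct and takes essentially the same route as the paper: the paper's proof likewise takes $(V,a,c)$ from Example \ref{exam quasi hopf quasi braid} and applies Proposition \ref{induced localization} to the inclusion of $\langle V\rangle\subset\Rep(H)$ into $\overline{(V,a,c)}$ as a faithful tensor subcategory, which is exactly the faithful strict monoidal forgetful functor whose two verifications (identifying $a^n$ with the iterated $\Phi$-action via the pentagon, and checking that $H$-linear maps satisfy the morphism condition) you spell out. The unitary and Hopf/Kac cases are handled identically in the paper, which simply chooses $a$ trivial when $\Phi=1\otimes 1\otimes 1$.
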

\begin{proof}
Let $(V,a,c)$ be the quasi-braided vector space defined in Example
\ref{exam quasi hopf quasi braid}. Then the monoidal category
$\langle V \rangle\subset \Rep(H)$ is a full tensor subcategory of
$\overline{(V,a,c)}$ so by Proposition \ref{induced localization}
$(V,a,c)$ is quasi-localization of $YB_{(V,c)}$. If $H$ is a quasitriangular
quasi-Kac algebra
$(V,a,c)$ is a unitary quasi-braided vector space and
$\overline{(V,a,c)}$ is a $C^*$-tensor category, so the same
argument shows that $(V,a,c)$ is a unitary quasi-localization of
$YB_{(V,c)}$.

Finally, if  $H$ is a Hopf algebra (resp. Kac algebra) we can choose
$a$ trivial, so $(V,c)$ is a localization (resp. unitary localization).
\end{proof}

\begin{remark}\label{crit rem}
\begin{enumerate}
 \item Observe that if $\CC$ is an integral braided fusion category then there
is a
quasi-Hopf algebra $H$ such that $\CC\cong\Rep(H)$ by \cite[Theorem 8.33]{ENO}.
In general it is difficult to determine if $\CC$ actually has a fiber functor,
i.e. $H$ can be chosen to be a (coassociative) Hopf algebra.  Prop.
\ref{localization of quasi-Hopf and Hopf} shows that if the braid group
representation associated with $X\in\CC$ does not have a localization $(V,c)$
with $\dim(V)=\FPdim(X)$ then no fiber functor can exist.
\item Note that Prop. \ref{localization of quasi-Hopf and Hopf} holds for
\emph{topological} quasi-triangular quasi-Hopf algebras (see \cite{Kas}): we may
allow
the universal $R$-matrix to reside in a completion of $H\ot H$ as long as
it gives rise to a quasi-braided vector space $(V,a,c)$.
\end{enumerate}

\end{remark}
In analogy with Prop. \ref{seq braid rep prop} we use the following:
\begin{nota}
 Let $(\rho_n,V_n)$ be a sequence of braid representations and $\mS$ the
associated sequence of algebras under $\C\B$.  A quasi-localization $(V,a,c)$ of
$\mS$ will be also called a quasi-localization of $(\rho_n,V_n)$.
\end{nota}

\subsection{Weak localization}
In analogy with weak Hopf algebras we make the following:
\begin{definition}
Let $\A$ be a sequence of algebras under $\C\B$. A \textbf{weak localization}
of $\A$ is a Bimod$(R)$-localization of $\A$ where $R$
is a finite dimensional semisimple $\C$-algebra.
\end{definition}

Any (multi-)fusion category is equivalent to the representation category of a
weak Hopf algebra (\cite[Corollary 2.22]{ENO}).  We have the related:
\begin{prop}
Let $\CC$ be a fusion category and $c$ a Yang-Baxter operator on
$V\in\CC$. The sequence of algebras $YB_{(V,c)}$ has a weak-localization.
\end{prop}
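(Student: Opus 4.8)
The plan is to mimic the proof of Proposition \ref{localization of quasi-Hopf and Hopf}, replacing the quasi-Hopf algebra by a weak Hopf algebra and the forgetful functor to $\mathrm{Vec}_f$ by the canonical forgetful functor to a category of bimodules. First I would invoke \cite[Corollary 2.22]{ENO} to fix a weak Hopf algebra $W$ together with a monoidal equivalence $\CC\simeq\Rep(W)$; transporting $(V,c)$ along this equivalence, I may assume from the outset that $\CC=\Rep(W)$ and that $c$ is a Yang-Baxter operator on an object $V\in\Rep(W)$. Let $R:=W_t$ be the target counital subalgebra of $W$. The weak Hopf axioms guarantee that $R$ is a separable $\C$-algebra, hence finite dimensional and semisimple, so that $\mathrm{Bimod}(R)$ is an admissible target for a weak localization.

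Next I would use the standard fact that restriction of the $W$-action endows every $W$-module $M$ with the structure of an $R$-bimodule, and that under this assignment the tensor product $M\otimes N$ of $W$-modules (formed, as usual for weak Hopf algebras, as the image of $\Delta(1)$ acting on $M\otimes_\C N$) is naturally identified with the balanced tensor product $M\otimes_R N$. This produces a forgetful functor $F:\Rep(W)\to\mathrm{Bimod}(R)$ which is faithful (since $\Hom_{\Rep(W)}(M,N)\subseteq\Hom_R(FM,FN)$ and $F$ is the inclusion of morphism spaces) and strong monoidal. It is the weak-Hopf-algebra analogue of a fiber functor, and its existence is precisely what distinguishes weak Hopf algebras from genuine Hopf algebras, whose fiber functor lands in $\mathrm{Vec}_f$. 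After strictifying both categories, as is permissible by the strictification argument already employed in the paper following \cite[Proposition 2.2]{Ga1}, I may take $F$ to be a faithful \emph{strict} monoidal functor.

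Finally, Proposition \ref{induced localization} applied to $F$ shows that $(F(V),F(c))$ is a Yang-Baxter operator in $\mathrm{Bimod}(R)$ and that it defines a $\mathrm{Bimod}(R)$-localization of $YB_{(V,c)}$; by definition this is a weak localization. The main obstacle is the second step: one must verify that the restriction functor really is monoidal with respect to $\otimes_R$, i.e.\ that the weak Hopf comultiplication collapses the $\C$-tensor product of $W$-modules to the balanced tensor product over the semisimple base $R$, and that it is faithful. Granting this standard piece of weak Hopf algebra theory, the remaining steps are formal, the only mild technical point being the passage to a strict monoidal functor needed to apply Proposition \ref{induced localization}.
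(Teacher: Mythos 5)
Your proof is correct, and its skeleton is the same as the paper's: produce a faithful monoidal functor $\CC\to$ Bimod$(R)$ with $R$ semisimple and then invoke Proposition \ref{induced localization}. The difference is in how that functor is obtained. The paper's proof is a one-liner: it cites Ostrik's theorem \cite{Ost} that every fusion category admits a faithful exact monoidal functor to Bimod$(R)$ (coming from the internal-Hom construction on any semisimple module category) and stops there. You instead route through weak Hopf reconstruction: apply \cite[Corollary 2.22]{ENO} to get $\CC\simeq\Rep(W)$, then use the forgetful functor $\Rep(W)\to$ Bimod$(W_t)$. Mathematically these are two faces of the same result --- the ENO corollary is itself proved via Ostrik's module-category machinery, and the forgetful functor to Bimod$(W_t)$ is precisely the fiber functor that makes the reconstruction work --- so your detour buys concreteness (an explicit base $R=W_t$ and an explicit functor, namely restriction) at the cost of importing the standard but nontrivial weak Hopf facts you flag yourself: separability of $W_t$ and the identification of $\Delta(1)(M\otimes_\C N)$ with $M\otimes_{W_t}N$ making restriction strong monoidal and faithful. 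One small credit to your version: Proposition \ref{induced localization} is stated for \emph{strict} monoidal functors, and you explicitly handle the passage from strong to strict monoidal by strictification, a point the paper's proof silently glosses over.
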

\begin{proof}
By \cite{Ost} every fusion category admits a faithful exact monoidal
functor from $\CC$ to Bimod$(R)$ for some $R$, so the proposition follows from
Proposition \ref{induced localization}.
\end{proof}
\begin{remarks}
\begin{enumerate}
  \item A weak localization with $R$ a simple algebra, defines a localization
in Vec$_f$. In fact, since $R$ is simple, Bimod$(R)$ is monoidally
equivalent to Vec$_f$.

\item If $\CC$ is a fusion category, by \cite{Ost} there is a bijective
correspondence between structures of $\CC$-module categories and
faithful exact monoidal functors from $\CC$ to Bimod$(R)$ for $R$ a
semisimple algebra.
\end{enumerate}

\end{remarks}

\subsection{Generalized localization}
\begin{ex}\label{gYBseq}
Given a $(k,m)$-generalized braided vector space $(V,c)$, we define
$(gYB_{(V,c)},\rho^{(V,c)},\iota)$ to be the sequence of
algebras under $\C \B$ defined as follows:
\begin{enumerate}
 \item $\End_{gYB_{(V,c)}}([n])=
\End(V^{\otimes k+m(n-2)})$
\item $\iota:\End_{gYB_{(V,c)}}([n])\to
\End_{gYB_{(V,c)}}([n+1])$ is defined by $f\mapsto f\otimes\id_{V}^{\ot m}$ and
\item $\rho^{(V,c)}_n: \C\B_n \to \End_{gYB_{(V,c)}}([n])$ is defined by
$\sigma_i \mapsto c_i^{k,m}$ where $c_i^{k,m}$ is given by eqn. (\ref{rep from
gYB oper}).
\end{enumerate}

\end{ex}

\begin{definition}
Let $\A$ be a sequence of algebras under $\C\B$.  A
\textbf{$(k,m)$-localization of $\A$} is a $(k,m)$-generalized
braided vector space $(V,c)$ and a faithful morphism
 $\phi: \underline{\A}\to gYB_{(V,c)}$ such that $\phi\circ
\rho_\A= \rho^{(V,c)}$.

A \textbf{unitary $(k,m)$-localization} of a unitary sequence of
algebras under $\C\B$ is a $(k,m)$-localization $(V,c)$ over the
$C^*$-tensor category $Hilb_f$, such that $c$ is a unitary isomorphism
and $\phi$ is a $*$-functor.
\end{definition}
\begin{remark}
 Notice that one may define $(k,m)$-localizations over \textit{any} monoidal
category $\CC$ by modifying Example \ref{gYBseq}.  We do not do so as we are
ultimately interested in matrix representations of the braid groups.
\end{remark}
As in the case of quasi-localizations we use the following:
\begin{nota}
 Let $(\rho_n,V_n)$ be a sequence of braid representations and $\mS$ the
associated sequence of algebras under $\C\B$.  A $(k,m)$-localization $(V,c)$ of
$\mS$ will be also called a $(k,m)$-localization of $(\rho_n,V_n)$.
\end{nota}

\section{Unitarity of weakly group-theoretical fusion
categories}\label{unitarity of wgt}

\subsection{The dual of a $\CC$-module $*$-category}

\begin{definition}
A \textbf{weak $C^*$-Hopf algebra} (resp. a \textbf{quasi-$C^*$-Hopf algebra})
is a
weak Hopf algebra $(H,m,\Delta,\varepsilon)$ (resp. a quasi-Hopf
algebra $(H,m,\Delta,\varepsilon,\Phi, S)$), such that $H$ is a
finite dimensional $C^*$-algebra and $\Delta$ is a $*$-homomorphism
(resp. $\Delta$ is a $*$-homomorphism and $\Phi^*=\Phi^{-1}$).
\end{definition}
\begin{remark}
The uniqueness of the unit, counit and the antipode for weak Hopf
algebras (see \cite[Proposition 2.10]{BNSz}) imply that \[1^*=1,\  \
\ \varepsilon(x^*)=\overline{\varepsilon(x)},\  \  \  (S\circ
*)^2=\id_H.\]

The dual $\widehat{H}$ of a weak $C^*$-Hopf algebra is again a
$C^*$-algebra with $*$-operation (see \cite[Theorem 4.5]{BNSz})
\[\langle\phi^*, x\rangle= \langle\phi, \overline{S(x)}\rangle,   \
\  \  \text{for all } \phi\in \widehat{H},  x\in H.\]
\end{remark}

A \textbf{$*$-representation} of a weak $C^*$-Hopf algebra $H$ is a finite
dimensional Hilbert space $(V, \langle,\rangle_V)$ carrying a left
action of $H$, such that  $\langle u, x \cdot v\rangle_V = \langle x^*
 \cdot u, v\rangle_V$ for all $u, v \in V$ and $x \in H$. The morphisms from
$(V, \langle,\rangle_V)$ to $(W, \langle,\rangle_W)$ are defined to
be the $H$-module morphisms from $V$ to
$W$. The category so obtained will be denoted by
$\mathcal{U}$-$\Rep(H)$, and it is a unitary (multi)-fusion
category, see \cite[Section 3]{BSz}.
\begin{theorem}\label{End* equivalent to End}
Let $\CC$ be a unitary fusion category and $\M$ be a $\CC$-module
$*$-category with $\M$ indecomposable. Then the monoidal category
$\End_{\CC}^{*}(\M)$ is a unitary fusion category monoidal
equivalent to $\End_{\CC}(\M)$.
\end{theorem}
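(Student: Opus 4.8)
The plan is to show that $\End_{\CC}^{*}(\M)$ is a unitary fusion category in its own right and then construct a monoidal equivalence with $\End_{\CC}(\M)$. First I would verify that $\End_{\CC}^{*}(\M)$ is a unitary fusion category. The objects are $\CC$-module $*$-functors $\M\to\M$ and the morphisms are module natural transformations; by the discussion preceding the statement, the $*$-structure $(\alpha^*)_X=(\alpha_X)^*$ makes $\Hom^*_{\CC}(\M,\M)$ into a $*$-category, and I would check that composition of module $*$-functors (the monoidal product) together with the identity functor gives a monoidal structure for which the composite of $*$-functors is again a $*$-functor and the associativity/unit constraints are unitary. Rigidity comes from taking adjoint functors, which in the unitary setting are two-sided adjoints realized by the $*$-operation, and semisimplicity together with finiteness of the set of simple objects follows because $\M$ is indecomposable and everything in sight is finite-dimensional and Hilbertian.

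The central point is then to compare $\End_{\CC}^{*}(\M)$ with the (not necessarily unitary) dual category $\End_{\CC}(\M)$. I would exhibit the obvious forgetful-type functor $\Theta:\End_{\CC}^{*}(\M)\to\End_{\CC}(\M)$ that forgets the $*$-structure on objects and morphisms; it is visibly $\C$-linear and strictly monoidal, and faithful since a module $*$-functor is in particular a module functor. The work is to prove that $\Theta$ is essentially surjective and full, i.e.\ that every $\CC$-module endofunctor of $\M$ is isomorphic to one admitting a compatible $*$-structure, and that this $*$-structure can be chosen to make the natural transformations behave.

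The hard part will be promoting an arbitrary module functor to a $*$-module functor up to isomorphism. My strategy here is polar decomposition, exactly as recorded in Remark \ref{remark polar}(2): given a module functor $F$, one first equips the underlying functor with a Hilbert-space structure on each $F(M)$ (possible because $\M$ is a direct sum of copies of $\Hilb_f$), obtaining a not-necessarily-unitary module structure $(F^0,F^1)$, and then replaces the module constraints by their unitary parts. Concretely, each constraint isomorphism $F^0_{X,M}$ in a $C^*$-category factors as $u\,a$ with $u$ unitary and $a$ self-adjoint; I would show that the unitary parts $u$ again satisfy the module-functor coherence (associativity and unit) axioms, so that $(F,\{u\})$ is a genuine $\CC$-module $*$-functor isomorphic to $(F,F^0,F^1)$ in $\End_{\CC}(\M)$. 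Verifying that the self-adjoint parts drop out of the coherence pentagons — i.e.\ that the polar decomposition is compatible with the constraints, using unitarity of the constraints of $\CC$ and of the $\CC$-action on $\M$ — is the technical core, and is where the indecomposability of $\M$ and the positivity of the $*$-structure are genuinely used. Once $\Theta$ is full and essentially surjective it is an equivalence of monoidal categories, and since its source is a unitary fusion category this simultaneously proves both assertions of the theorem.
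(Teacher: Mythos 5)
Your route is genuinely different from the paper's, and the difference matters. The paper proves Theorem \ref{End* equivalent to End} by reconstruction, not by direct unitarization: it realizes $\M$ as $\mathcal{U}$-$\Rep(R)$ for a finite dimensional $C^*$-algebra $R$, invokes \cite{Ost} to get a canonical weak Hopf algebra $H$ with $\Rep(H)$ equivalent to $\CC$ and $\Rep(\widehat{H})$ equivalent to $\End_{\CC}(\M)$, observes that the $*$-structure on $\M$ makes the $R$-fiber functor a $*$-monoidal functor so that $H$ is a weak $C^*$-Hopf algebra, and then uses the duality of \cite{BNSz} (the dual $\widehat{H}$ is again a weak $C^*$-Hopf algebra) plus the elementary fact that every finite dimensional module over a $C^*$-algebra is unitarizable. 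In that framework, what you call essential surjectivity of $\Theta$ becomes the averaging fact that every $\widehat{H}$-module is isomorphic to a $*$-representation, and the unitary fusion structure on $\End_{\CC}(\M)$ comes for free from $\mathcal{U}$-$\Rep(\widehat{H})$; no coherence of polar parts ever has to be verified.

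This points to the genuine gap in your proposal, precisely at what you call the technical core. Your claim that replacing each module constraint $F^0_{X,M}$ by the unitary factor of its polar decomposition yields constraints that still satisfy the module-functor coherence is asserted, not proved, and it does not follow from Remark \ref{remark polar}: polar decomposition is not multiplicative, and the coherence axiom equates $F^0_{X\ot Y,M}$ with a \emph{composite} of the form $(\id_X\ot F^0_{Y,M})\circ F^0_{X,Y\ot M}$, whereas the unitary part of a composite is not the composite of unitary parts. The conjugation trick that does unitarize a monoidal or module natural \emph{isomorphism} $\eta$ (there, coherence expresses $\eta_{X\ot Y}$ as a unitary conjugate of $\eta_X\ot\eta_Y$, so positive parts and their square roots pass through the conjugating $*$-isomorphism) is unavailable for the constraints themselves, because the axiom composes different components of $F^0$ rather than conjugating one by unitaries. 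The statement you need --- every $\CC$-module endofunctor of $\M$ is isomorphic to a $\CC$-module $*$-functor --- is essentially condition (3) in the paper's definition of completely unitary, which the authors only establish in general via the weak Hopf translation (and, for the stronger completely-unitary package, only for weakly group-theoretical categories in Theorem \ref{w g-t are unitary}, by a nontrivial induction); it is not a formal consequence of polar decomposition. Two smaller points: fullness of $\Theta$ is automatic, since morphisms in $\End_{\CC}^{*}(\M)$ are by definition \emph{all} module natural transformations; and your opening step, that $\End_{\CC}^{*}(\M)$ is itself a unitary fusion category (rigid, semisimple, finitely many simples, with indecomposability of $\M$ guaranteeing a simple unit), is also not routine without already having the equivalence with $\End_{\CC}(\M)$, so your argument as structured is circular at that point as well.
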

\begin{proof}
Let $R$ be a finite dimensional $C^*$-algebra such that
$\mathcal{U}$-$\Rep(R)$ is unitarily equivalent to $\M$. By
\cite[Propostion 3]{Ost} the $\CC$-module structure on $\M$ defines
an $R$-fiber functor $F:\CC\to $Bimod$(R)$, so by \cite[Theorem
4]{Ost} there is a canonical weak Hopf algebra $H$ such that
$\Rep(H)$ is monoidally equivalent to $\CC$ and $\Rep(\widehat{H})$ is
monoidally equivalent to $\End_{\CC}(\M)$. Since $\M$ is a
$\CC$-module $*$-category the $R$-fiber functor is a $*$-monoidal
functor, and $H$ is a weak $C^*$-Hopf algebra. The dual weak Hopf
algebra $\widehat{H}$ is again a weak $C^*$-Hopf algebra, so
$\End_{\CC}(\M)$ is a unitary fusion category. Finally, in order to
prove that $\End_{\CC}^{*}(\M)$ is $*$-monoidally equivalent to
$\End_{\CC}(\M)$, we can use again \cite[Theorem 4]{Ost} and that
every $\widehat{H}$-module is isomorphic to a unitary
$\widehat{H}$-module.
\end{proof}
\subsection{Tensor products of module categories}
In this section we shall define the tensor product of module
$*$-categories over unitary fusion categories, following
\cite{ENO3}. We shall denote the tensor product of $\CC$-module
categories defined in \textit{loc. cit.} by $\boxtimes_\CC$.

\smallbreak

\begin{definition}\label{ext product}
Let $\M_1$ and $\M_2$ be $*$-categories. The exterior tensor
product $\M_1\overline{\boxtimes}\M_2$ is the $*$-category with
following objects and morphisms:
$$\text{Obj}(\M_1\overline{\boxtimes}\M_2)=\{
\bigoplus_{i\in\mathcal{I}} X_i\overline{\boxtimes}Y_i:  \  X_i\in
\text{Obj}(\M_1),
Y_i\in \text{Obj}(\M_2), |\mathcal{I}|<\infty\},$$
$$\Hom_{\M_1\overline{\boxtimes}\M_2}(\bigoplus_{i\in\mathcal{I}}
X_i\overline{\boxtimes}Y_i,
\bigoplus_{i\in\mathcal{I}}
X_i'\overline{\boxtimes}Y_i')=\bigoplus_{i,j\in\mathcal{I}}\Hom_{\M_1}(X_i,
X_j')\otimes_{ \mathbb
C}\Hom_{\M_2}(Y_i,Y_j'),$$ and $*$-structure $(f\overline{\boxtimes} g)^*=
f^*\overline{\boxtimes} g^*$.
\end{definition}

\medbreak

Let $\CC, \D$ be unitary fusion categories, so that the $*$-category
$\CC\overline{\boxtimes}\D$ has an obvious $*$-fusion category structure. By
definition, a $(\CC,\D)$-bimodule $*$-category is a module category
over the unitary fusion category
$\CC\overline{\boxtimes}\D^{\text{rev}}$, where $\D^{\text{rev}}$ is
 $\D$ with reversed tensor product.

For the following two definitions let $\A$ be a $*$-category and $\M,
\mathcal{N}$ left and right
(strict) $\CC$-module $*$-categories, respectively.

\begin{definition}\cite[Definition 3.1]{ENO3}
Let $F :\M\overline{\boxtimes} \mathcal{N}\to \A$ be an exact
$*$-functor. We say that $F$ is \textbf{$\CC$-balanced} if there is a natural
family of unitary isomorphisms
\[b_{M,X,N} : F(M \otimes X\overline{\boxtimes} N) \to F(M\overline{\boxtimes}
X \otimes N),\] such that
\[b_{M,X\otimes Y,N}= b_{M,X,Y \otimes N}\circ b_{M\otimes X,Y,N},\]
for all $M\in \M, N\in \mathcal{N}, X,Y\in \CC$.
\end{definition}

\begin{definition}\cite[Definition 3.3]{ENO3}
A tensor product of a right $\CC$-module $*$-category $\M$ and a
left $\CC$-module $*$-category $\mathcal{N}$ is a $*$-category
$\M\overline{\boxtimes}_{\CC}\mathcal{N}$ together with a
$\CC$-balanced $*$-functor
 \[B_{\M,\mathcal{N}} :\M\overline{\boxtimes} \mathcal{N} \to
 \M\overline{\boxtimes}_\CC \mathcal{N}\]
inducing, for every $*$-category $\A$, an equivalence between the
category of $\CC$-balanced $*$-functors from $\M\overline{\boxtimes}
\mathcal{N}$ to $\A$ and the category of $*$-functors from
$\M\overline{\boxtimes}_\CC \mathcal{N}$ to $\A$:
\[\Hom_{bal}^*(\M\overline{\boxtimes}  \mathcal{N}, \A) \cong
\Hom^*(\M\overline{\boxtimes}_\CC \mathcal{N}, \A).\]
\end{definition}

\begin{remark}
\begin{enumerate}
   \item  The existence of the tensor product for module categories over
$*$-fusion
categories can be proved using the same ideas in \cite{ENO3}.
  %\item If the modules categories are semisimple then the tensor product is a
semisimple category, see \cite{ENO3}.
\end{enumerate}
\end{remark}

Given a right  $\CC$-module $*$-functor $F : \M \to \M'$ and a left
$\CC$-module *-functor $G : \mathcal{N} \to \mathcal{N}'$ note that
$B_{\M',\mathcal{N}'}(F \overline{\boxtimes} G) :
\M\overline{\boxtimes} \mathcal{N} \to \M' \overline{\boxtimes}_\CC
\mathcal{N}'$ is a $\CC$-balanced $*$-functor. Thus the universality
of $B$ implies the existence of a unique right $*$-functor $F
\overline{\boxtimes}_\CC G := \overline{B_{\M',\mathcal{N}'}(F
\overline{\boxtimes }G)}$ making the diagram

\[
\begin{diagram}
\node{\M\overline{\boxtimes} \mathcal{N}}
\arrow{s,l}{B_{\M,\mathcal{N}}} \arrow{e,l}{F\overline{\boxtimes} G}
\node{\M'\overline{\boxtimes}\mathcal{N}'}
\arrow{s,r}{B_{\M',\mathcal{N}'}}\\
\node{\M\overline{\boxtimes}_{\CC}\mathcal{N}}\arrow{e,t}{F\overline{\boxtimes}
_\CC
G} \node{\M'\overline{\boxtimes}_\CC \mathcal{N}'}
\end{diagram}
\]commutative.
The bihomomorphism $\overline{\boxtimes}_\CC$ is functorial in module
$*$-functors, \emph{i.e.}, $(F'\overline{\boxtimes}_\CC
E')(F\overline{\boxtimes}_\CC E)= F'F\overline{\boxtimes}_\CC E'E$.

\begin{remarks}\label{remarks sobre producto tensorial}

\begin{enumerate}
  \item If  $\M$ is a $(\CC,\mathcal{E})$-bimodule $*$-category and
$\mathcal{N}$
is an $(\mathcal{E},\D)$-bimodule $*$-category, then
$\M\overline{\boxtimes}_\mathcal{E}\mathcal{N}$ is a
$(\CC,\D)$-bimodule $*$-category and $B_{\M,\mathcal{N}}$ is a
$(\CC,\D)$-bimodule $*$-functor, see \cite[Proposition
3.13]{Justin}.
  \item Let $\M$ be a $(\CC,\D)$-bimodule $*$-category. The $\CC$-module action
on $\M$ defines
  a $\CC$-balanced $*$-functor. Let $r_\M :\CC\overline{\boxtimes}_\CC\M \to \M$
denote the unique
  $*$-functor factoring through $B_{\CC,\M}$. As in \cite[Proposition
3.15]{Justin} we can prove that $r_\M$ is
  a $(\CC,\D)$-module $*$-category equivalence.
  \item Let $\M$ be a right $\CC$-module $*$-category, $\mathcal{N}$ a
$(\CC,\D)$-bimodule
  $*$-category, and $\mathcal{K}$ a left $\D$-module $*$-category. Then
  as in  \cite[Proposition 3.15]{Justin}, there is a canonical equivalence
$(\M\overline{\boxtimes}_\CC
\mathcal{N})\overline{\boxtimes}_\mathcal{D} \mathcal{K} \cong
\M\overline{\boxtimes}_\CC (\mathcal{N} \overline{\boxtimes}_\D
\mathcal{K})$ of bimodule
*-categories. Hence the notation $\mathcal{M}\overline{\boxtimes}_\CC
\mathcal{N}\overline{\boxtimes}_\D
\mathcal{K}$ will yield no ambiguity.
\end{enumerate}
\end{remarks}

\subsection{Crossed product tensor categories}

We briefly recall group actions on tensor categories. For more
details the reader is referred to \cite{DGNO}.

\medbreak

Let $\CC$ be a tensor category and let
$\underline{\text{Aut}_\otimes(\CC)}$ be the monoidal category of
monoidal auto-equivalences of $\CC$, arrows are  tensor natural
isomorphisms and tensor product the  composition of monoidal
functors.

\medbreak

For any group $G$ we shall denote by $\underline{G}$ the monoidal
category where objects are elements of $G$ and tensor product is
given by the product of $G$. An action of the group  $G$ on
$\ca$, is a monoidal functor   $F:\underline{G}\to
\underline{\text{Aut}_\otimes(\CC)}$. In other words, for any
$\sigma\in G$ there is a monoidal functor $(F_\sigma,
\zeta_\sigma):\CC\to\CC$, and for any $\sigma,\tau\in G$, there are
natural monoidal isomorphisms $\gamma_{\sigma,\tau}:F_\sigma\circ
F_\tau\to F_{\sigma\tau}$.

Given an action $F:\underline{G}\to
\underline{\text{Aut}_\otimes(\CC)}$ of $G$ on $\CC$,  the
$G$-crossed product tensor category, denoted by $\CC\rtimes G$, is
defined as follows. As an abelian category $\CC\rtimes G=
\bigoplus_{\sigma\in G}\CC_\sigma$, where $\CC_\sigma =\CC$ as an
abelian category, the tensor product is
$$[X, \sigma]\otimes [Y,\tau]:= [X\otimes F_\sigma(Y),
\sigma\tau],\  \  \   X,Y\in \CC,\  \   \sigma,\tau\in G,$$ and the
unit object is $[1,e]$. See \cite{tambara} or \cite{Ga3} for the
associativity constraint and a proof of the pentagon identity.

\begin{lemma}\label{lema action unitaria}
Let $\CC$ be a unitary fusion category and $G$ a finite group acting
on $\CC$. Then $\CC\rtimes G$ is a unitary fusion category  if and
only if the $G$-action on $\CC$ is unitary, i.e., $F_\sigma$ are
monoidal $*$-functors and $\gamma_{\sigma,\tau}$ are unitary natural
isomorphisms for all $\sigma,\tau\in G$. If $\CC\rtimes G$ is a
unitary fusion category $\CC$ is a $\CC\rtimes G$-module
$*$-category.
\end{lemma}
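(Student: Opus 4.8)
The plan is to equip $\CC\rtimes G=\bigoplus_{\sigma\in G}\CC_\sigma$ with the $*$-structure inherited componentwise from $\CC$ (each $\CC_\sigma$ being a copy of $\CC$, and morphisms between distinct $\sigma$-components being zero); since $\CC$ is a positive $*$-category, $\CC\rtimes G$ is automatically a positive $*$-category. Thus the only content of the first assertion is the compatibility of this $*$-structure with the tensor product together with the unitarity of the structural constraints, and my strategy is to match each such condition with exactly one part of the hypothesis that the $G$-action be unitary. I would begin with the compatibility $(f\otimes g)^*=f^*\otimes g^*$: for $f:[X,\sigma]\to[X',\sigma]$ and $g:[Y,\tau]\to[Y',\tau]$ the tensor product in $\CC\rtimes G$ is $f\otimes F_\sigma(g)$, so $(f\otimes g)^*=f^*\otimes F_\sigma(g)^*$ using that $\CC$ is unitary, while $f^*\otimes g^*=f^*\otimes F_\sigma(g^*)$. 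Hence the compatibility holds for all $f,g$ if and only if $F_\sigma(g)^*=F_\sigma(g^*)$ for all $g$, i.e. if and only if each $F_\sigma$ is a $*$-functor, the reverse implication being obtained by specializing $X=X'=1$ and invoking the unit constraints of $\CC$.

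Next I would treat the constraints. By construction (see \cite{tambara,Ga3}) the associativity constraint of $\CC\rtimes G$ is a composite of the associativity constraint $\alpha$ of $\CC$, the monoidal-structure isomorphisms $\zeta_\sigma$ of the functors $F_\sigma$, and the isomorphisms $\gamma_{\sigma,\tau}$; similarly the unit constraints are built from those of $\CC$ together with the unit isomorphisms $F^1_\sigma:1\to F_\sigma(1)$. Since $\CC$ is unitary, $\alpha$ and the unit constraints of $\CC$ are already unitary, so if the action is unitary, meaning that $\zeta_\sigma$, $F^1_\sigma$ and $\gamma_{\sigma,\tau}$ are all unitary, then each structural constraint of $\CC\rtimes G$ is a composite of unitary isomorphisms and hence unitary. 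Together with the previous paragraph this proves that a unitary action yields a unitary fusion category $\CC\rtimes G$.

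For the converse I would recover the unitarity of the action by isolating one structural piece at a time. First, the right unit constraint of $\CC\rtimes G$ at $[X,\sigma]$ is built from the right unit constraint of $\CC$ and the map $F^1_\sigma$; as the former is unitary and the whole is unitary, $F^1_\sigma$ is unitary. Next, specializing the associator to $([X,\sigma],[1,\tau],[Z,\rho])$ collapses the $\zeta_\sigma$-factor to a composite of unit constraints of $\CC$ and $F^1_\sigma$, now known to be unitary, so this associator becomes a composite of $\alpha$, unitary maps, and $\gamma_{\sigma,\tau,Z}$; cancelling the unitary factors forces $\gamma_{\sigma,\tau}$ to be unitary. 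Feeding this back, the general associator is a composite of unitary maps together with the single factor $\zeta_\sigma$ evaluated at $(Y,F_\tau(Z))$, so $\zeta_\sigma$ is unitary on all such pairs; since $F_\tau$ is an equivalence, hence essentially surjective, this gives unitarity of $\zeta_\sigma$ on every pair of objects. Combined with the fact that each $F_\sigma$ is a $*$-functor, this says precisely that the action is unitary. I expect this isolation step — choosing objects so that the composite defining the associator degenerates to a single nontrivial factor, and invoking essential surjectivity to pass from objects in the image of $F_\tau$ to all objects — to be the main technical obstacle.

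Finally, for the last assertion I would endow $\CC$ with the left $\CC\rtimes G$-module structure $[X,\sigma]\,\overline{\otimes}\,Z:=X\otimes F_\sigma(Z)$, whose module associativity constraint is the very same composite of $\alpha$, $\zeta_\sigma$ and $\gamma_{\sigma,\tau}$ that defines the associator of $\CC\rtimes G$. Assuming $\CC\rtimes G$ is unitary, all these factors are unitary by the equivalence just proved, so the module constraint is unitary; moreover $(f\,\overline{\otimes}\,g)^*=f^*\otimes F_\sigma(g)^*=f^*\otimes F_\sigma(g^*)=f^*\,\overline{\otimes}\,g^*$, using that each $F_\sigma$ is a $*$-functor. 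Hence $\CC$ is a $\CC\rtimes G$-module $*$-category, as claimed.
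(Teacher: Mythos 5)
Your proof is correct; the paper itself dismisses this lemma with the single word ``Straightforward,'' and your argument supplies exactly the checks being taken for granted: the componentwise $*$-structure, the equivalence of $(f\otimes g)^*=f^*\otimes g^*$ with each $F_\sigma$ being a $*$-functor, and the isolation of $F^1_\sigma$, $\gamma_{\sigma,\tau}$ and $\zeta_\sigma$ inside the crossed-product associator. The only point deserving a word is your essential-surjectivity step: to transport unitarity of $\zeta_\sigma$ from pairs $(Y,F_\tau Z)$ to arbitrary pairs you should choose the comparison isomorphism to be \emph{unitary}, which is possible by polar decomposition (Remark \ref{remark polar}) and legitimate because you have already established that $F_\sigma$ is a $*$-functor and hence carries unitaries to unitaries.
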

\begin{proof}
Straightforward.
\end{proof}

\subsection{Clifford theory for $G$-graded fusion categories}

Let $G$ be a group and $\CC$ be a tensor category. We
shall say that  $\CC$ is  $G$-graded if there is a decomposition
$$\CC=\oplus_{\sigma\in G} \CC_\sigma$$ of $\CC$ into a direct sum of full
abelian subcategories, such that  for all $\sigma, \tau\in G$, the
bifunctor  $\otimes$ maps $\CC_\sigma\times \CC_\tau$ to
$\CC_{\sigma \tau}$. Given a $G$-graded tensor category $\CC$, and a
subgroup $H\subset G$, we shall denote by $\CC_H$ the tensor
subcategory $\bigoplus_{h\in H}\CC_h$.

\begin{definition}
Let $\CC$ be a $G$-graded fusion category. If
$(\M,\overline{\otimes})$ is a $\CC_e$-module category, then a
\textbf{$\CC$-extension of $\M$} is a $\CC$-module category $(\M,\odot)$ such
that $(\M,\overline{\otimes})$ is obtained by restriction to
$\CC_e$.
\end{definition}

\begin{prop}\label{proposition extention is *}
Let $\CC$ be a unitary fusion category graded by a group $G$ and
$(\M,\overline{\otimes})$ an indecomposable $\CC_e$-module
$*$-category. Then each $\CC$-extension $(\M,\odot)$ is a
$\CC$-module $*$-category.
\end{prop}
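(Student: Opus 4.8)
The plan is to verify directly the two defining conditions of a $\CC$-module $*$-category for $(\M,\odot)$: first, that the action is compatible with the $*$-operations, i.e. $(f\odot g)^*=f^*\odot g^*$ for all morphisms $f$ in $\CC$ and $g$ in $\M$; and second, that the module associativity constraints $\mu_{X,Y,M}$ are unitary. Because $\CC$ is $G$-graded, $\Hom_\CC(X,Y)=0$ unless $X$ and $Y$ lie in the same component $\CC_\sigma$, and $\otimes$ sends $\CC_\sigma\times\CC_\tau$ to $\CC_{\sigma\tau}$; this lets me organize both verifications component by component. By hypothesis $(\M,\overline{\otimes})$ is a $\CC_e$-module $*$-category, so both conditions already hold whenever the relevant objects of $\CC$ lie in $\CC_e$. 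The entire task is therefore to propagate $*$-compatibility and unitarity from the trivial component out to each $\CC_\sigma$.

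The engine I would use for this propagation is rigidity of $\CC$ combined with the polar decomposition available in any $C^*$-category (Remark \ref{remark polar}). For $X\in\CC_\sigma$ choose a dual $X^\vee\in\CC_{\sigma^{-1}}$ with evaluation and coevaluation $\mathrm{ev}_X\colon X^\vee\otimes X\to\one$ and $\mathrm{coev}_X\colon\one\to X\otimes X^\vee$; since $\CC$ is a unitary fusion category these may be chosen compatibly with $*$. The functor $L_X:=X\odot(-)$ then acquires $L_{X^\vee}=X^\vee\odot(-)$ as a two-sided adjoint, with units and counits assembled from $\mathrm{ev}_X$, $\mathrm{coev}_X$ and the module constraints $\mu$. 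The key observation is that $X^\vee\otimes X$ and $X\otimes X^\vee$ lie in $\CC_e$, where $*$-compatibility and unitarity are already known; feeding this back through the biadjunction should force $L_X$ to be a $*$-functor, which gives the action-compatibility condition $(f\odot\id_M)^*=f^*\odot\id_M$ on the component $\CC_\sigma$.

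To obtain unitarity of $\mu_{X,Y,M}$ for general $X,Y$ I would polar-decompose $\mu_{X,Y,M}=u_{X,Y,M}\,p_{X,Y,M}$ into its unitary and positive self-adjoint parts. Polar decomposition is canonical, hence the positive parts $p_{X,Y,M}$ assemble into a self-adjoint natural automorphism of the action. The module pentagon, in which the associativity constraints of $\CC$ appear and are unitary, then yields a coherence identity among the $p$'s; restricting that identity to $\CC_e$, where $p=\id$ by hypothesis, and invoking indecomposability of $(\M,\overline{\otimes})$ to exclude nontrivial self-adjoint natural rescalings of the action, should force $p_{X,Y,M}=\id$ throughout, i.e. every $\mu_{X,Y,M}$ is unitary.

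I expect the main obstacle to be precisely this last propagation. The delicate points are the naturality and coherence of the polar decomposition across components that do \emph{not} lie in $\CC_e$, and the exact use of indecomposability of $(\M,\overline{\otimes})$ to rule out a nontrivial positive self-adjoint deformation of the module associators. This is where the grading must be leveraged essentially: the relations $X^\vee\otimes X,\,X\otimes X^\vee\in\CC_e$ are what connect every component back to the trivial one on which unitarity is given, and making that connection rigorous—rather than merely plausible—is the crux of the argument.
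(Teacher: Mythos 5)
Your strategy founders at the last step, and the failure is structural rather than technical. The claim that the module pentagon plus indecomposability forces the positive parts $p_{X,Y,M}$ of the polar decompositions to be the identity is false, because a $\CC$-extension is only determined up to equivalence and one can twist any unitary extension by a non-unitary natural automorphism of the action supported off $\CC_e$. Concretely, take $\CC=\mathrm{Vec}_G$ with trivial associator (unitary, pointed) acting on $\M=\Hilb_f$, each component acting by the identity functor, and module constraints given by a $2$-cocycle $\psi\in Z^2(G,\C^*)$ normalized so that $\psi(e,-)=\psi(-,e)=1$ but with $|\psi|\not\equiv 1$. This is a $\CC$-extension of the standard indecomposable $\CC_e$-module $*$-category whose restriction to $\CC_e$ is untouched, and yet $p_{g,h,M}=|\psi(g,h)|\cdot\id\neq\id$. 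The pentagon makes the positive parts a positive-real-valued cocycle, nothing more; since $\R_{>0}$ is uniquely divisible, $H^2(G,\R_{>0})=0$ and the cocycle is a coboundary, so the extension is \emph{equivalent} to a unitary one --- but your argument asserts the strictly stronger, false statement that the given constraints are already unitary. Indecomposability cannot rescue this: it controls $\CC$-module natural endomorphisms of the \emph{identity} functor (Schur-type rigidity), whereas $p$ is a deformation of the associativity data, a $2$-cochain-type object to which no such rigidity applies. The same twisting shows your first step also fails literally: for a given extension the action functor $L_X$, $X\in\CC_\sigma$, need not be a $*$-functor, only naturally isomorphic to one, so the biadjunction through $X^\vee\otimes X\in\CC_e$ cannot ``force'' $*$-compatibility on the nose. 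The proposition must therefore be read, and proved, up to equivalence of module categories; the correct move after polar decomposition is to gauge the positive part away by twisting with its square root (a positive self-adjoint natural automorphism), not to show it vanishes.

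For comparison, the paper sidesteps all of this pointwise analysis by a classification argument: it forms $\overline{\M}=\CC\overline{\boxtimes}_{\CC_e}\M$, invokes \cite[Theorem 1.3]{Ga2} to identify $\End^*_\CC(\overline{\M})$ with the crossed product $\End^*_{\CC_e}(\M)\rtimes G$, observes that the $\CC$-extension is completely determined by the corresponding module category over this crossed product, and then applies Lemma \ref{lema action unitaria} to equip that module category with a $*$-structure --- so unitarity is imported wholesale from Clifford theory rather than propagated component by component. Your duality-plus-polar-decomposition approach could in principle be repaired into a direct proof (establish that the positive deformation is coherent, then trivialize it using unique divisibility of positive natural automorphisms, in the same spirit in which the paper itself uses polar decomposition in the proof of Theorem \ref{inducion completely...}), but as written the decisive step would fail, and the appeal to indecomposability is a wrong turn rather than the missing ingredient.
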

\begin{proof}Let $(\M,\odot)$ be a $\CC$-extension of
$(\M,\overline{\otimes})$. If $\overline{\M}=
\CC\overline{\boxtimes}_{\CC_e} \M$, then by \cite[Theorem 1.3]{Ga2}
$\End_\CC^*(\overline{\M})$ is a unitary fusion category
$*$-monoidally equivalent to a  $G$-semidirect product unitary fusion
category $\End_{\CC_e}^*(\M)\rtimes G$ and the $\CC$-extension
$(\M,\odot )$ is completely determined by the
$\End_{\CC_e}^*(\M)\rtimes G$-module category $\End_{\CC_e}^*(\M)$.
By Lemma \ref{lema action unitaria} $\End_{\CC_e}^*(\M)$ is a
$\End_{\CC_e}^*(\M)\rtimes G$-module $*$-category, thus the
$\CC$-extension is a $\CC$-module $*$-category.
\end{proof}

The following is a simplified version of Clifford theorem for
unitary fusion category, see \cite{Ga2}.
\begin{theorem}\label{clifford theo for *-categories}
Let $\CC$ be a $G$-graded $*$-fusion category, $\M$  an
indecomposable $\CC$-module $*$-category and $\mathcal{N}$ an
indecomposable $\CC_e$-submodule $*$-subcategory of $\M$. Then there
is a subgroup $S\subset G$ and a $\CC_S$-extension
$(\mathcal{N},\odot)$ of $\mathcal{N}$, such that $\M \cong
\CC\overline{\boxtimes}_{\CC_S}\mathcal{N}$ as $\CC$-module
$*$-categories.
\end{theorem}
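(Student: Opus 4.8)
The plan is to run the proof of the (non-unitary) Clifford decomposition for $G$-graded fusion categories from \cite{Ga2}, carrying a compatible $*$-structure through every construction by invoking the unitary analogues of the relative tensor product and the dual module category assembled above. First I would restrict the action along the inclusion $\CC_e\hookrightarrow\CC$: since $\M$ is a module $*$-category of locally finite dimension, $\Res_{\CC_e}\M$ decomposes as a direct sum $\bigoplus_j\mathcal{N}_j$ of indecomposable $\CC_e$-module $*$-subcategories, with $\mathcal{N}_1=\mathcal{N}$. The $G$-grading makes $G$ act on the finite set of these components by $\mathcal{N}_j\mapsto\CC_\sigma\odot\mathcal{N}_j$, and the indecomposability of $\M$ as a $\CC$-module $*$-category forces this action to be transitive. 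I would then take $S=\{\sigma\in G:\CC_\sigma\odot\mathcal{N}\cong\mathcal{N}\text{ as }\CC_e\text{-module }*\text{-categories}\}$ to be the stabilizer of $\mathcal{N}$.

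Next I would produce the $\CC_S$-extension. For $\sigma\in S$ the chosen equivalences $\CC_\sigma\odot\mathcal{N}\cong\mathcal{N}$ assemble, exactly as in \cite{Ga2}, into a $\CC_S$-module structure $(\mathcal{N},\odot)$ restricting to the given $\CC_e$-action; here $\CC_S=\bigoplus_{\sigma\in S}\CC_\sigma$ is itself an $S$-graded unitary fusion category with trivial component $\CC_e$. Since $\mathcal{N}$ is an indecomposable $\CC_e$-module $*$-category, Proposition \ref{proposition extention is *} (applied to $\CC_S$ and $S$ in place of $\CC$ and $G$) guarantees that this $\CC_S$-extension is in fact a $\CC_S$-module $*$-category. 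At this point the relative tensor product of module $*$-categories built above makes $\CC\overline{\boxtimes}_{\CC_S}\mathcal{N}$ a $\CC$-module $*$-category, since $\CC$ is a $(\CC,\CC_S)$-bimodule $*$-category and Remarks \ref{remarks sobre producto tensorial} identify the relative product of such data as a left $\CC$-module $*$-category.

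The comparison functor is induced by the module action itself: the $*$-functor $\odot:\CC\overline{\boxtimes}\mathcal{N}\to\M$ is $\CC_S$-balanced with unitary balancing isomorphisms $b_{M,X,N}$ inherited from the unitary module associativity constraint of $\M$, so by the universal property of $\overline{\boxtimes}_{\CC_S}$ it factors through a $*$-functor $\Theta:\CC\overline{\boxtimes}_{\CC_S}\mathcal{N}\to\M$ carrying a canonical $\CC$-module structure. The content of \cite{Ga2} is precisely that $\Theta$ is an equivalence of $\CC$-module categories. It therefore remains to upgrade $\Theta$ to a unitary $\CC$-module $*$-equivalence, and this is the step I expect to be the main obstacle: $\Theta$ is an equivalence that is already a $*$-functor, but neither the unit of the adjunction nor the module constraint need be unitary. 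I would resolve this by applying the polar decomposition of Remark \ref{remark polar} to the module-constraint isomorphisms and to the unit, using that in a $C^*$-category every isomorphism factors as a unitary times a positive self-adjoint morphism; absorbing the positive parts replaces $\Theta$ by a genuinely unitary functor, in line with the general fact that every equivalence of $*$-categories is equivalent to a unitary one. This yields $\M\cong\CC\overline{\boxtimes}_{\CC_S}\mathcal{N}$ as $\CC$-module $*$-categories, completing the argument.
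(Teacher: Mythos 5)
Your proposal is correct and is essentially the paper's own argument unpacked: the paper proves this theorem by simply citing the non-unitary Clifford decomposition \cite[Corollary 4.4]{Ga2} together with Proposition \ref{proposition extention is *}, which are exactly the two ingredients you reconstruct (the decomposition of $\Res_{\CC_e}\M$, transitivity and the stabilizer $S$, the $\CC_S$-extension made into a module $*$-category via Proposition \ref{proposition extention is *}, and the balanced comparison functor from the universal property of $\overline{\boxtimes}_{\CC_S}$). Your closing polar-decomposition step upgrading $\Theta$ to a unitary $\CC$-module $*$-equivalence is a detail the paper leaves implicit in its citations, and it is consistent with Remark \ref{remark polar} and the paper's stated fact that every equivalence of $*$-categories is equivalent to a unitary one.
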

\begin{proof}
It follows from \cite[Corollary 4.4]{Ga2} and Proposition
\ref{proposition extention is *}.
\end{proof}

\subsection{Completely unitary fusion categories}

\begin{definition}
Let $\CC$ be a fusion category. We shall say that $\CC$ is
\textbf{completely unitary} if the following
properties are satisfied:

\begin{enumerate}
  \item $\CC$ is monoidally equivalent to a unique (up to $*$-monoidal
equivalences)
  unitary fusion category (we shall
  denote this unitary fusion category again by $\CC$).
  \item Every  $\CC$-module category is equivalent to a unique
  (up to $\CC$-module  $*$-functor equivalences) $\CC$-module
  $*$-category.
  \item Every $\CC$-module functor equivalence between $\CC$-module
$*$-categories is
  equivalent to a unique (up to unitary $\CC$-module natural isomorphisms)
$\CC$-module
  $*$-functor equivalence.
\end{enumerate}
\end{definition}

\begin{remark}\label{coeficientes universales}

 Let
$U(1)=\{z\in \C: |z|=1\}$ and $G$ be a finite group. By the
universal coefficient theorem \cite[Theorem 10.22]{rot}
$H^n(G,U(1))=H^n(G, \C^*)$ for all $n>0$, \emph{i.e.}, every
$n$-cocycle with coefficients on $\C^*$ is equivalent to a some
$n$-cocycle with coefficients on $U(1)$.
\end{remark}
\begin{prop}\label{pointed fusion are complet unitary}
Every pointed fusion category is a completely unitary fusion category.
\end{prop}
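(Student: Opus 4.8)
The plan is to use the classification of pointed fusion categories together with the universal coefficient theorem recorded in Remark~\ref{coeficientes universales}, which lets us replace $\C^*$-valued cocycles by $U(1)$-valued ones at every cohomological degree; it is precisely this replacement that produces the required unitarity. Recall that a pointed fusion category $\CC$ is monoidally equivalent to $\mathrm{Vec}_G^\omega$, the category of finite-dimensional $G$-graded vector spaces with associativity constraint determined by a $3$-cocycle $\omega\in Z^3(G,\C^*)$, where $G$ is the (finite) group of isomorphism classes of invertible objects.

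For property~(1) I would argue as follows. Since $[\omega]\in H^3(G,\C^*)=H^3(G,U(1))$ by Remark~\ref{coeficientes universales}, choose a cohomologous representative $\omega'\in Z^3(G,U(1))$. Endowing $\mathrm{Vec}_G^{\omega'}$ with the $*$-operation given by the Hermitian adjoint on each $\Hom$-space, the associativity constraints have entries in $U(1)$ and are therefore unitary, so $\mathrm{Vec}_G^{\omega'}$ is a unitary fusion category monoidally equivalent to $\CC$. For uniqueness, any other unitary model arises from a $U(1)$-valued cocycle $\omega''$ cohomologous to $\omega'$ in $H^3(G,\C^*)$, hence also in $H^3(G,U(1))$; writing $\omega''=\omega'\, d\mu$ with $\mu\in C^2(G,U(1))$ exhibits a $*$-monoidal equivalence whose structural morphisms are $U(1)$-scalars, thus unitary.

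For properties~(2) and~(3), by the classification of module categories over $\mathrm{Vec}_G^\omega$ (see \cite{Ost}), indecomposable $\CC$-module categories correspond to pairs $(L,\psi)$ with $L\le G$ such that $\omega|_L$ is cohomologically trivial and $\psi\in C^2(L,\C^*)$ satisfies $d\psi=\omega'|_L$. Applying Remark~\ref{coeficientes universales} at degree $2$ (and using that $\omega'|_L$ is already $U(1)$-valued), I would choose $\psi$ valued in $U(1)$; this equips the corresponding module category with unitary module constraints, giving a $\CC$-module $*$-category structure. Module equivalences and module natural transformations between such categories are governed by $1$- and $0$-cochains, which the same theorem renders $U(1)$-valued, producing $*$-functor equivalences and unitary natural isomorphisms; the corresponding uniqueness assertions follow because the ambiguity in each choice of representative is a coboundary that can be normalized via Remark~\ref{coeficientes universales} one degree lower.

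The main obstacle is the uniqueness half of conditions~(2) and~(3): establishing that any two module $*$-structures, or any two $*$-functor equivalences, differ by a \emph{unitary} module equivalence. I expect this to reduce, level by level, to the repeated application of $H^n(L,\C^*)=H^n(L,U(1))$ at $n=2,1,0$, together with a careful bookkeeping of the twisted cocycle condition $d\psi=\omega'|_L$ under change of representative; the existence statements, by contrast, are immediate once the $U(1)$-valued representatives are in hand.
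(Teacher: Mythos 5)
Your proposal is correct and follows essentially the same route as the paper, whose entire proof is the one-line citation of Remark~\ref{coeficientes universales} (the identification $H^n(G,\C^*)=H^n(G,U(1))$) together with the classification of module categories over pointed fusion categories from \cite{O2}; you have simply unpacked that citation, realizing $\CC$ as $\mathrm{Vec}_G^\omega$ with a $U(1)$-valued cocycle and making the pairs $(L,\psi)$ unitary degree by degree. The uniqueness bookkeeping you flag as the remaining obstacle (normalizing coboundaries at degrees $2,1,0$, e.g.\ via polar decomposition of cochains into a $U(1)$-part and a positive part, the latter trivialized since $H^n(L,\R_{>0})$ vanishes for finite $L$) is exactly what the paper leaves implicit, so your write-up is, if anything, more detailed than the original.
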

\begin{proof}
It follows from Remark \ref{coeficientes universales} and the
classification of module categories over  pointed fusion categories,
\cite{O2}.
\end{proof}
 In \cite{ENO3} they show that a graded fusion
category $\CC=\bigoplus_{\sigma\in G}\CC_\sigma$ determines and it is
determined by the following data:

\begin{enumerate}
  \item a fusion category $\CC_e$, a collection of invertible $\CC_e$-bimodule
categories $\CC_\sigma, \sigma\in G$,
  \item a collection of $\CC_e$-bimodule isomorphisms
$M_{\sigma,\tau}:\CC_\sigma\boxtimes_{\CC_e} \CC_\tau\to \CC_{\sigma\tau}$,
  \item natural isomorphisms of $\CC_e$-bimodule functors
\[\alpha_{\sigma,\tau,\rho}:M_{\sigma,\tau
\rho}(\text{Id}_{\CC_\sigma}\boxtimes_{\CC_e} M_{\tau,\rho})\to M_{\sigma
\tau,\rho}(M_{\sigma,\tau}\boxtimes_{\CC_e} \text{Id}_{\CC_\rho})\] satisfying
the identity
\end{enumerate}
\begin{multline}\label{equacion datos graduada}
    M_{\sigma,\tau \rho k}(\id_\sigma\boxtimes_{\CC_e}\alpha_{\tau,\rho,k})\circ
\alpha_{\sigma,\tau \rho,k}(\text{Id}_{\CC_\sigma}
\boxtimes_{\CC_e}M_{\tau,\rho}\boxtimes_{\CC_e}\text{Id}_{\CC_k} )\\
= \alpha_{\sigma,\tau,\rho k}(\text{Id}_{\CC_\sigma}
\boxtimes_{\CC_e} \text{Id}_{\CC_\tau} \boxtimes_{\CC_e} M_{\rho,k})
\circ \alpha_{\sigma\tau,\rho,k}(M_{\sigma,\tau} \boxtimes_{\CC_e}
\text{Id}_{\CC_\rho} \boxtimes_{\CC_e} \text{Id}_{\CC_k}),
\end{multline}
for all $\sigma, \tau, \rho, k  \in  G$, where we use the notation
Id for the identity functor, and $\id$ for the identity morphism.

\begin{remark}
If $\CC$ is a $G$-graded fusion category where $\CC_e$ is a unitary
fusion category, then $\CC$ is a unitary fusion category with
$\CC_e$ as unitary fusion subcategory if and only if $\CC_\sigma$
are $\CC_e$-bimodule $*$-category, $M_{\sigma,\tau}$ are
$\CC_e$-bimodules $*$-functors, and $\alpha_{\sigma,\tau,\rho}$ are
unitary isomorphism.
\end{remark}

\begin{theorem}\label{inducion completely...}
If $\CC$ is a $G$-graded fusion category such that $\CC_e$ and
$\CC_e\overline{\boxtimes}\CC_e^{rev}$ are completely unitary then
$\CC$ and $\CC\overline{\boxtimes}\CC^{rev}$ are completely unitary.
\end{theorem}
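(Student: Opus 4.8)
The plan is to unitarize, piece by piece, the classification data for the grading recalled above from \cite{ENO3}, and then read off complete unitarity from it. Recall that $\CC$ is determined by $\CC_e$, the invertible $\CC_e$-bimodule categories $\CC_\sigma$, the bimodule equivalences $M_{\sigma,\tau}$, and the associators $\alpha_{\sigma,\tau,\rho}$ subject to (\ref{equacion datos graduada}); and by the Remark preceding the theorem, $\CC$ carries a unitary structure with $\CC_e$ as a unitary fusion subcategory exactly when this data can be chosen unitary. Now a $\CC_e$-bimodule category is precisely a module category over $\CC_e\overline{\boxtimes}\CC_e^{rev}$, and a bimodule functor is precisely a module functor over the same category. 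Thus I would first fix the unitary structure on $\CC_e$ coming from its complete unitarity, and then use conditions (2) and (3) in the definition of complete unitarity for $\CC_e\overline{\boxtimes}\CC_e^{rev}$ to replace each $\CC_\sigma$ by a $\CC_e$-bimodule $*$-category and each $M_{\sigma,\tau}$ by a bimodule $*$-functor, uniquely up to the appropriate unitary equivalence.

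After this replacement each $\alpha_{\sigma,\tau,\rho}$ is a natural isomorphism between two bimodule $*$-functors. I would take its polar decomposition $\alpha_{\sigma,\tau,\rho}=u_{\sigma,\tau,\rho}\,p_{\sigma,\tau,\rho}$ (Remark \ref{remark polar}); since the $\CC_\sigma$ are invertible, the positive parts $p_{\sigma,\tau,\rho}$ reduce to positive scalars. Substituting the polar decompositions into (\ref{equacion datos graduada}) and separating the unitary and positive parts, the scalars $p_{\sigma,\tau,\rho}$ are seen to satisfy the $3$-cocycle condition in $\R_{>0}$. The crucial input is Remark \ref{coeficientes universales}: because $\R_{>0}$ is uniquely divisible, $H^3(G,\R_{>0})=0$, so this cocycle is a coboundary and can be absorbed by rescaling the $M_{\sigma,\tau}$, leaving the unitary $u_{\sigma,\tau,\rho}$ as an associator still satisfying (\ref{equacion datos graduada}). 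By the Remark preceding the theorem this produces a unitary structure on $\CC$, and the uniqueness clause of condition (1) follows by the same argument one cohomological degree lower, using the vanishing of $H^2(G,\R_{>0})$.

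For conditions (2) and (3) I would reduce questions about $\CC$-module categories to the trivial component via Clifford theory. By Theorem \ref{clifford theo for *-categories}, every indecomposable $\CC$-module category is of the form $\CC\overline{\boxtimes}_{\CC_S}\mathcal{N}$ for a subgroup $S\subset G$ and a $\CC_S$-extension $\mathcal{N}$ of an indecomposable $\CC_e$-module category. Complete unitarity of $\CC_e$ endows the base with an essentially unique $\CC_e$-module $*$-structure, Proposition \ref{proposition extention is *} automatically upgrades the extension to a $\CC$-module $*$-category, and the ambiguity in the gluing is once more controlled by $\R_{>0}$-valued $G$-cohomology, which vanishes. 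Uniqueness of the unitarization of a module category and of a module-functor equivalence (conditions (2) and (3)) follows by identical bookkeeping in degrees two and one respectively.

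Finally I treat $\CC\overline{\boxtimes}\CC^{rev}$. Its condition (1) is immediate: having unitarized $\CC$, the category $\CC^{rev}$ is unitary (it is $\CC$ with reversed tensor product, which preserves the $*$-structure), and $\CC\overline{\boxtimes}\CC^{rev}$ is then a unitary fusion category as an exterior product of unitary fusion categories, uniquely so by functoriality of $\overline{\boxtimes}$. For conditions (2) and (3) I observe that $\CC\overline{\boxtimes}\CC^{rev}$ is $(G\times G)$-graded with trivial component $\CC_e\overline{\boxtimes}\CC_e^{rev}$, which is completely unitary by hypothesis; the Clifford-theoretic argument of the previous paragraph then applies verbatim to this grading. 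The main obstacle throughout is the \emph{simultaneous} and \emph{coherent} unitarization of the associators and the higher gluing data while preserving the identities (\ref{equacion datos graduada}): every step reduces to trivializing the positive part of a polar decomposition, and the whole proof hinges on the vanishing of $H^{\geq 1}(G,\R_{>0})$ recorded in Remark \ref{coeficientes universales}.
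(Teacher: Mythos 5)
Your proof is correct, and its architecture coincides with the paper's: unitarize the extension data of \cite{ENO3} using complete unitarity of $\CC_e\overline{\boxtimes}\CC_e^{rev}$ to replace the bimodule categories $\CC_\sigma$ and the functors $M_{\sigma,\tau}$ by $*$-data, settle conditions (2) and (3) by Clifford theory (Theorem \ref{clifford theo for *-categories} together with Proposition \ref{proposition extention is *}), and regrade $\CC\overline{\boxtimes}\CC^{rev}$ by $G\times G^{op}$ with trivial component $\CC_e\overline{\boxtimes}\CC_e^{rev}$. The one place you genuinely diverge is the associator step. The paper obtains unitary $\alpha_{\sigma,\tau,\rho}$ directly from the polar decomposition: as you correctly note, invertibility of the $\CC_\sigma$ makes the positive parts positive \emph{scalars}, and then \emph{uniqueness} of the polar decomposition applied to the two sides of \eqref{equacion datos graduada} forces the scalar parts to agree --- which is exactly the multiplicative identity you derive --- and simultaneously forces the unitary parts to satisfy \eqref{equacion datos graduada}. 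So your appeal to $H^3(G,\R_{>0})=0$ is correct but redundant: there is no residual cocycle left to absorb, and strictly speaking one cannot ``rescale'' the functors $M_{\sigma,\tau}$; the absorption you intend must be phrased as modifying them by the scalar natural automorphisms $b(\sigma,\tau)\cdot\mathrm{id}$, which changes $\alpha$ by the coboundary $\delta b$. The vanishing of positive-real cohomology (equivalently Remark \ref{coeficientes universales}, via $\C^*\cong U(1)\times\R_{>0}$) does enter the paper's proof where you also use it, namely in the uniqueness clauses of conditions (2) and (3), though the paper delegates that bookkeeping to \cite[Proposition 4.6 and Theorem 1.3]{Ga2} rather than redoing it. Your added details --- the degree-two argument for uniqueness in condition (1), and the observation that condition (1) for $\CC\overline{\boxtimes}\CC^{rev}$ is immediate once $\CC$ is unitary --- correctly fill in steps the paper leaves implicit.
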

\begin{proof}
First we shall show that $\CC$ is monoidally equivalent to a unique
unitary fusion category. Since
$\CC_e\overline{\boxtimes}\CC_e^{rev}$ is completely unitary for
each $\sigma \in G$, the $\CC_e$-bimodule category $\CC_\sigma$ is
equivalent to a unique $\CC_e$-bimodule $*$-category
$\overline{\CC}_\sigma$. The bifunctor
$\otimes:\CC_\sigma\times\CC_\tau\to \CC_{\sigma\tau}$ and the
complete unitarity of $\CC_e\overline{\boxtimes}\CC_e^{rev}$ define
for each pair $\sigma,\tau \in G$ a unique $\CC_e$-bimodule
$*$-functor
$M_{\sigma,\tau}:\overline{\CC}_\sigma\overline{\boxtimes}_{\CC_e}\overline{\CC}
_{\tau}\to
\overline{\CC}_{\sigma\tau}$, such that $$M_{f,gh}(Id_{\CC_f}
\overline{\boxtimes}_{\CC_e} M_{g,h}) \cong M_{f
g,h}(M_{f,g}\overline{\boxtimes}_{\CC_e} Id_{\CC_h}),$$ as
$\CC_e$-module functors. Now, using the polar decomposition (see
Remark \ref{remark polar}) and the associativity constraint of
$\CC$, there are unitary isomorphisms of $\CC_e$-module $*$-functors
$$\alpha_{\sigma,\tau,\rho}:M_{f,gh}(Id_{\CC_f} \overline{\boxtimes}_{\CC_e}
M_{g,h})
 \to M_{fg,h}(M_{f,g}\overline{\boxtimes}_{\CC_e} Id_{\CC_h}),$$
for all $\sigma, \tau,\rho\in G,$ such that the equation
\eqref{equacion datos graduada} holds. The  new $G$-graded fusion
category is equivalent to $\CC$ and it is a unitary fusion category.

Thus we may assume that $\CC$ is a unitary fusion category. Let
$\M$ be an indecomposable $\CC$-module category, then by the
complete unitarity of $\CC_e$ and  Theorem \ref{clifford theo for
*-categories}, $\M$ is equivalent to a $\CC$-module $*$-category. Moreover, if
$\M$ and
$\mathcal{N}$ are $\CC$-module $*$-categories equivalent as
$\CC$-module categories,  by \cite[Proposition 4.6]{Ga2}, Remark
\ref{coeficientes universales} and \cite[Theorem 1.3]{Ga2}, $\M$ and
$\mathcal{N}$ are equivalent as $\CC$-module $*$-categories and
every $\CC$-module equivalence is equivalent to a $\CC$-module
$*$-functor equivalence.

Finally, note that $\CC\overline{\boxtimes}\CC^{rev}$ is a $G\times
G^{op}$-graded fusion category where
$(\CC\overline{\boxtimes}\CC^{rev})_{(e,e)}=\CC_e\boxtimes\CC^{rev}_e$.
Thus by the second part of this proof
$\CC\overline{\boxtimes}\CC^{rev}$ is completely unitary.
\end{proof}
\subsubsection{Weakly group-theoretical fusion categories are
completely unitary} Let $\CC$ be an arbitrary fusion category. The
adjoint category $\CC_{ad}$ is the smallest fusion subcategory of
$\CC$ containing all objects $X \otimes X^*$, where $X \in \CC$ is
simple. There exists a unique faithful grading of $\CC$ for which
$\CC_e = \CC_{ad}$ (see \cite{GeNi}). It is called the universal
grading of $\CC$. The corresponding group is called the universal
grading group of $\CC$, and denoted by U$(\CC)$. All faithful
gradings of $\CC$ are induced by the universal grading, in the sense
that for any faithful grading U$(\CC)$ canonically projects onto the
grading group $G$, and $\CC_e$ contains $\CC_{ad}$.

Let $\CC$ be a fusion category. Let $\CC^{(0)} = \CC, \CC^{(1)} =
\CC_{ad}$ and $\CC^{(n)} = (C^{(n-1)})_{ad}$ for every integer
$n\geq 1$. The non-increasing sequence of fusion subcategories of
$\CC$
$$\CC = \CC^{(0)}\supseteq \CC^{(1)} \supseteq \cdots \supseteq
\CC^{(n)}\supseteq \cdots $$ is called the upper central series of
$\CC$.

\begin{definition}\cite{GeNi}
A fusion category $\CC$ is called \textbf{nilpotent} if its upper
central series converges to Vec$_f$; \emph{i.e.}, $\CC^{(n)} =$
Vec$_f$ for some $n$. The smallest number $n$ for which this happens
is called the nilpotency class of $\CC$.
\end{definition}
\begin{definition}\cite{ENO2}
A fusion category $\CC$ is called  \textbf{weakly group-theoretical}
if it is Morita equivalent to a nilpotent fusion category.
\end{definition}

\begin{theorem}\label{w g-t are unitary}
Every weakly group theoretical fusion category is a completely
unitary fusion category.
\end{theorem}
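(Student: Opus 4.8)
The plan is to match the two clauses of the definition of weak group-theoreticity. Since $\CC$ is weakly group-theoretical it is Morita equivalent to some nilpotent fusion category $\D$, so it suffices to establish two things: first, that every nilpotent fusion category is completely unitary, and second, that complete unitarity is invariant under Morita equivalence. Combining these gives the theorem. The first point will be proved by induction along the universal grading and is where Theorem \ref{inducion completely...} and Proposition \ref{pointed fusion are complet unitary} do the work; the second is a transport-of-structure argument across the module $2$-category.

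For the nilpotent case I would induct on the nilpotency class, but strengthen the hypothesis so that the Deligne product is carried along, since Theorem \ref{inducion completely...} requires complete unitarity of both $\CC_e$ and $\CC_e\overline{\boxtimes}\CC_e^{rev}$. Let $P(n)$ be the assertion that for every nilpotent fusion category $\D$ of nilpotency class at most $n$, both $\D$ and $\D\overline{\boxtimes}\D^{rev}$ are completely unitary. The base case $P(1)$ is precisely the pointed case: a category of class $\le 1$ has $\CC_{ad}=\text{Vec}_f$, hence is pointed, and the Deligne square of a pointed category is again pointed, so $P(1)$ follows from Proposition \ref{pointed fusion are complet unitary}. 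For the inductive step, take $\CC$ nilpotent of class $n$ and pass to its universal grading $\CC=\bigoplus_{g\in \U(\CC)}\CC_g$, whose trivial component is $\CC_e=\CC_{ad}=\CC^{(1)}$ and is therefore nilpotent of class $n-1$. Applying $P(n-1)$ to $\CC_e$ yields that $\CC_e$ and $\CC_e\overline{\boxtimes}\CC_e^{rev}$ are completely unitary, which are exactly the hypotheses of Theorem \ref{inducion completely...}; that theorem then outputs complete unitarity of $\CC$ and of $\CC\overline{\boxtimes}\CC^{rev}$, giving $P(n)$.

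It remains to move complete unitarity across a Morita equivalence $\CC\simeq\End_\D(\M)$, where $\M$ is an indecomposable $\D$-module category and $\D$ is completely unitary. The existence half of condition $(1)$ is immediate: complete unitarity of $\D$ lets me replace $\M$ by an equivalent $\D$-module $*$-category, and then Theorem \ref{End* equivalent to End} identifies $\CC$ with the unitary fusion category $\End_\D^*(\M)$. For conditions $(2)$, $(3)$ and the uniqueness half of $(1)$, I would transport statements along the equivalence of module $2$-categories implemented by the invertible $(\D,\CC)$-bimodule category $\M$: a $\CC$-module category $\mathcal{P}$ corresponds to the $\D$-module category $\M\overline{\boxtimes}_\CC\mathcal{P}$ and back through the inverse bimodule, its unique unitary model is produced by complete unitarity of $\D$, and the $*$-structure is carried back using the functoriality of $\overline{\boxtimes}_\D$ in module $*$-functors recorded in Remarks \ref{remarks sobre producto tensorial}. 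The main obstacle I anticipate is not a single hard construction but the uniqueness bookkeeping in this last step: for $(1)$ I must check that two unitary structures on $\CC$ induce, via biduality $\D=\End_\CC(\M)$ and Theorem \ref{End* equivalent to End}, the same unitary structure on $\D$, so that the uniqueness already established for $\D$ forces a $*$-monoidal equivalence between them; and for $(2)$--$(3)$ I must verify that the Morita correspondence is compatible with $*$-structures and descends to unitary natural isomorphisms. Each of these is a transport along an equivalence rather than a genuinely new difficulty, but it is where the $\overline{\boxtimes}_\D$ formalism, Remarks \ref{remarks sobre producto tensorial}, and Theorem \ref{End* equivalent to End} must be made to interlock carefully.
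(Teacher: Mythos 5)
Your proposal is correct and takes essentially the same approach as the paper: the paper likewise reduces the Morita-equivalence step to Theorem \ref{End* equivalent to End} (disposing in one line of the transport you spell out more carefully) and then proves the nilpotent case by exactly your induction on nilpotency class, with the pointed base case supplied by Proposition \ref{pointed fusion are complet unitary} and the inductive step by Theorem \ref{inducion completely...}, carrying the Deligne square $\CC\overline{\boxtimes}\CC^{rev}$ along in the induction hypothesis just as your strengthened statement $P(n)$ does. Your explicit uniqueness bookkeeping for transporting conditions (1)--(3) across the bimodule $2$-category is a more detailed rendering of what the paper leaves implicit, not a different argument.
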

\begin{proof}
By Theorem \ref{End* equivalent to End}, we only need to prove that
every nilpotent fusion category is completely unitary.

Let $\CC$ be a nilpotent fusion category. We shall use induction on
the nilpotency class of $\CC$. If the nipotency class is one, then
$\CC$ is a pointed fusion category, so by Proposition \ref{pointed
fusion are complet unitary}, $\CC$ and $\CC\overline{\boxtimes}
\CC^{rev}$ are completely unitary fusion categories. Now, let $\CC$
be a nilpotent fusion category of nilpotency class $n$, so
$\CC_{ad}=\CC^{(1)}$ has $n-1$ nilpotency class and by hypothesis of
induction $\CC_{ad}$ and $\CC_{ad}\overline{\boxtimes}
(\CC_{ad})^{rev}$ are completely unitary, thus by Theorem
\ref{inducion completely...}, $\CC=\CC^{(0)}$ is a completely
unitary fusion category.
\end{proof}

A (weak or quasi)-Hopf algebra is called weakly group-theoretical if
$\Rep(H)$ is a weakly group-theoretical fusion category.

\begin{corollary}\label{semisimple Hop then Kac}
Every weakly group-theoretical (quasi)-Hopf algebra is isomorphic to
a (quasi)-Kac algebra.
\end{corollary}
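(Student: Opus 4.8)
The plan is to use the unitary model of $\Rep(H)$ guaranteed by Theorem \ref{w g-t are unitary} together with Tannaka-type reconstruction to manufacture the required $*$-structure on $H$. First I would record the reconstruction dictionary. For a (quasi-)Hopf algebra $H$ the forgetful functor $\omega\colon\Rep(H)\to\text{Vec}_f$ is a (quasi-)fiber functor, and $H$ is recovered, together with its full (quasi-)Hopf structure, as $\End(\omega)$. Equivalently, $\omega$ makes the one-object category $\text{Vec}_f$ into a $\Rep(H)$-module category (the action being $X\cdot V=\omega(X)\otimes V$, with the module constraint built from $\Phi$ in the quasi case), and module-category structures on $\text{Vec}_f$ correspond precisely to (quasi-)fiber functors on $\Rep(H)$.

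Next, since $H$ is weakly group-theoretical the fusion category $\CC:=\Rep(H)$ is weakly group-theoretical, hence completely unitary by Theorem \ref{w g-t are unitary}. Using part (1) of complete unitarity I would replace $\CC$ by its unitary model, and using part (2) I would realize the module category $\text{Vec}_f$ coming from $\omega$ as (equivalent to) a $\CC$-module $*$-category $\M$. Its underlying category is one-object semisimple, i.e.\ $\Hilb_f$, so $\M$ is the same datum as a $*$-(quasi-)fiber functor $F\colon\CC\to\Hilb_f$ whose tensor-structure isomorphisms $F^0_{X,Y}$ are unitary. Reconstructing $K:=\End(F)$, the Hilbert-space adjoints furnish an involution making $K$ a finite-dimensional $C^*$-algebra; unitarity of $F^0$ forces $\Delta$ and $\varepsilon$ to be $*$-homomorphisms, and, in the quasi case, transporting the unitary associativity constraint of $\CC$ through $F$ yields $\Phi^*=\Phi^{-1}$. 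Thus $K$ is a (quasi-)Kac algebra with $\Rep(K)\cong\CC$.

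Finally, the $\CC$-module equivalence between the two realizations of $\text{Vec}_f$ is, by part (3) of complete unitarity, a unitary one, and unravels into a monoidal isomorphism $\omega\cong F$ of (quasi-)fiber functors; hence $H=\End(\omega)\cong\End(F)=K$ as (quasi-)Hopf algebras, which is the claim. The genuine Hopf case is identical upon taking $\Phi$ trivial throughout, so that $F$ is an honest $*$-tensor functor and $K$ is a Kac algebra.

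The main obstacle is precisely this last stretch: verifying that the abstract unitary data provided by complete unitarity really assemble into the defining algebraic identities of a (quasi-)Kac algebra ($\Delta,\varepsilon$ being $*$-homomorphisms and $\Phi^*=\Phi^{-1}$), and that the module equivalence delivers an \emph{honest} algebra isomorphism rather than merely a gauge (twist) equivalence of quasi-Hopf algebras. It is part (3) of complete unitarity---guaranteeing that the module equivalence can be chosen unitary---that pins down the isomorphism and rules out the latter possibility.
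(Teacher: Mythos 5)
Your Hopf-algebra case is fine and is essentially the paper's own argument: the forgetful functor is then an honest tensor functor, so it does endow Vec$_f$ with a $\Rep(H)$-module structure, complete unitarity upgrades the fiber functor to a $*$-monoidal one, and Woronowicz-type reconstruction yields a Kac algebra. The quasi case, however, contains a genuine gap, and it sits in your opening ``dictionary.'' Module-category structures on Vec$_f$ over $\CC=\Rep(H)$ correspond to \emph{honest} fiber functors, not to quasi-fiber functors: writing the action as $X\cdot V=F(X)\ot V$, the module pentagon forces the structure isomorphisms $F(X)\ot F(Y)\to F(X\ot Y)$ to satisfy full monoidal coherence against the associator of $\CC$. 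In particular the module constraint cannot be ``built from $\Phi$'': the third tensor leg of $\Phi$ would have to act on a bare vector space, and the only available contraction $(\id\ot\id\ot\varepsilon)(\Phi)=1\ot 1$ returns the trivial constraint, whose pentagon fails exactly when $\Phi$ is not gauge-trivial. So for a genuinely non-coassociative $H$ the forgetful functor defines \emph{no} $\CC$-module structure on Vec$_f$ at all --- if it did, $H$ would admit a fiber functor and be gauge-equivalent to a Hopf algebra, contradicting the paper's own theorem that every quasi-Hopf algebra $A$ with $\Rep(A)\cong\CC(\ssl_3,6)$ is non-coassociative. Consequently part (2) of complete unitarity has nothing to unitarize in the quasi case, and your $*$-quasi-fiber functor never gets constructed.

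The paper circumvents precisely this obstruction: after passing to the unitary model, which is a $C^*$-tensor category by M\"uger's result (so Hom-spaces are Hilbert spaces), it builds a $*$-quasi-fiber functor \emph{directly}, namely $F(X)=\Hom(R,X)$ with $R$ the regular object, choosing unitary isomorphisms $J_{i,j}:F(X_i)\ot F(X_j)\to F(X_i\ot X_j)$ on simple objects --- legitimate because a quasi-fiber functor requires no coherence --- and then reconstructs $\End_\C(F)$ as a quasi-Kac algebra with $\mathcal{U}$-$\Rep(\End_\C(F))\cong\CC$. Your final step, deducing an honest isomorphism $H\cong K$ of quasi-Hopf algebras from part (3) of complete unitarity, leans on the same nonexistent module category; note the paper asserts only that the reconstructed quasi-Kac algebra has unitary representation category equivalent to $\CC$ (the algebra-level identification comes from the regular object matching the dimension vector of $\omega$), and in the quasi setting the quasi-Hopf structure is in any case determined only up to twist, so pinning down a strict isomorphism of quasi-Hopf structures is neither available by your method nor required by the statement.
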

\begin{proof}
Let $H$ be a weakly group-theoretical Hopf algebra. By Theorem
\ref{w g-t are unitary}, the fusion category $\Rep(H)$ is equivalent
to a unitary fusion category $\CC$, and the forgetful functor defines a
$\CC$-module structure over Vec$_f$, so again by the complete
unitarity of $\CC$ the fiber functor is equivalent to a unique exact
$*$-monoidal functor. By Tannaka-reconstruction theory for compact
quantum groups (see \cite{Wor}), the Hopf algebra $H$  associated to
a $*$-monoidal fiber functor is isomorphic to a finite dimensional
$C^*$-Hopf algebra, \emph{i.e.,} a Kac algebra.

Now suppose that $H$ is a weakly group-theoretical quasi-Hopf
algebra. Then $\Rep(H)$ is equivalent to a unique unitary fusion
category $\CC$. By \cite[Proposition 2.1]{Mueg} every unitary fusion
category is a $C^*$-tensor category, so for every pair of objects $X,Y$,
$\Hom(X,Y)$ is a Hilbert space and $\langle fg, h\rangle =\langle
g,f^*h\rangle$, $\langle fg,h\rangle =\langle f, hg^*\rangle$ for
all morphisms in $\CC$.

Let $R\in\CC$ be the regular object, then $F(X)=\Hom(R,X)$ defines a
$*$-functor $F:\CC\to $Hilb$_f$. For every pair of simple objects
$X_i, X_j \in \CC$ there is a unitary isomorphism
$J_{i,j}:F(X_i)\otimes_{\C}F(X_j)\to F(X_i\otimes X_j)$, that defines a
quasi-fiber functor preserving the $*$-structure with unitary
constraint. Then by a standard reconstruction argument the algebra
$\End_\C(F )$ of functorial endomorphisms of $F$ has a natural
structure of quasi-Kac algebra, such that
$\mathcal{U}$-$\Rep(\End_\C(F ))\cong \CC$ as unitary fusion
categories.
\end{proof}
\begin{remarks}
\begin{enumerate}
  \item An analogous result to Corollary \ref{semisimple Hop then
  Kac} is true for weakly group-theoretical weak Hopf algebras.
  \item In the survey article \cite{Andrus}, the following Question 7.8 is
raised.
Given a semisimple Hopf algebra $H$, does it admit a compact
involution?  Corollary \ref{semisimple Hop then Kac} gives an
affirmative answer for weakly group theoretical Hopf algebras.
\end{enumerate}
\end{remarks}
It is not known (see \cite{ENO2} Question 2) if there exist weakly integral
fusion categories that are not weakly group-theoretical.  Theorem \ref{w g-t are
unitary} inspires the following
\textbf{question}: \emph{Is every weakly integral fusion category
completely unitary or unitary?}

\begin{remarks}
\begin{enumerate}
  \item The answer is ``no" without the weak integrality condition. In fact,
every unitary fusion category is pseudo-unitary, (see \cite[Section
8.4]{ENO}) but for example the Yang-Lee category is a non-integral
and non pseudo-unitary fusion category.  Indeed unitarity can fail
in very dramatic ways, see \cite{Ribbon}.
  \item The question can be reduced to integral fusion categories.  By
\cite[Proposition 8.27]{ENO} and \cite{GeNi} for every weakly integral fusion
  category $\CC$ there is $G$-grading such that $\CC_e$ is an integral fusion
category, so by
  Theorem \ref{inducion completely...}, $\CC$ is completely unitary
  if and only if $\CC_e$ is completely unitary.
\end{enumerate}
\end{remarks}

If $\CC$ is a unitary fusion category, the \textbf{unitary center}
$\mathcal{Z}^*(\CC)$ is defined as the full fusion subcategory of
the usual center $\mathcal{Z}(\CC)$, where
$(X,c_{X,-})\in\mathcal{Z}^*(\CC)$ if $c_{X,W}:X\otimes W\to
W\otimes X$ are unitary natural transformations for all $W\in \CC$.
It is easy to see that $\mathcal{Z}^*(\CC)$ is a unitary fusion
category. The following result appears in \cite[Theorem
6.4]{Mueg2} we provide an alternate proof using our notation.

\begin{prop}\label{centro unitario}
Let $\CC$ be a unitary fusion category. Then $\mathcal{Z}^*(\CC)$ is
braided monoidally equivalent to $\mathcal{Z}(\CC)$.
\end{prop}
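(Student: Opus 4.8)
The inclusion $J:\mathcal{Z}^*(\CC)\hookrightarrow\mathcal{Z}(\CC)$ is, by construction, a fully faithful strict monoidal functor: objects of $\mathcal{Z}^*(\CC)$ are exactly the objects $(X,c_{X,-})$ of $\mathcal{Z}(\CC)$ whose half-braiding happens to be unitary, and the Hom-spaces coincide. It is moreover braided, since the braiding in the center is given on objects by the half-braiding itself, the isomorphism $(X,c)\otimes(Y,d)\to(Y,d)\otimes(X,c)$ being $c_{X,Y}$, a formula insensitive to unitarity of $c$. Hence the statement reduces to showing that $J$ is \emph{essentially surjective}, i.e.\ that every object of $\mathcal{Z}(\CC)$ is isomorphic in $\mathcal{Z}(\CC)$ to one carrying a unitary half-braiding; a fully faithful, essentially surjective braided monoidal functor is automatically a braided monoidal equivalence.

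My plan is to identify the center with a category of module endofunctors and then invoke Theorem \ref{End* equivalent to End}. Let $\D=\CC\overline{\boxtimes}\CC^{rev}$, a unitary fusion category, and regard $\CC$ as the regular $\D$-module $*$-category; since $\CC$ is a fusion category it is indecomposable as a $\D$-module category. The standard description of the Drinfeld center as the category of $\CC$-bimodule endofunctors of $\CC$ furnishes a braided monoidal equivalence $\mathcal{Z}(\CC)\cong\End_{\D}(\CC)$, under which an object $(X,c_{X,-})$ corresponds to the bimodule functor $X\otimes(-)$ whose right-module constraint is assembled from the half-braiding $c$. The key point, which I would verify carefully, is that this constraint is a \emph{unitary} natural isomorphism precisely when $c$ is unitary; thus the same equivalence restricts to an identification $\mathcal{Z}^*(\CC)\cong\End^*_{\D}(\CC)$ compatible with the two inclusions, so that $J$ corresponds to the forgetful functor $\End^*_{\D}(\CC)\hookrightarrow\End_{\D}(\CC)$.

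With this dictionary in place the conclusion is immediate. Theorem \ref{End* equivalent to End}, applied to the unitary fusion category $\D$ and the indecomposable module $*$-category $\CC$, asserts that $\End^*_{\D}(\CC)$ is a unitary fusion category monoidally equivalent to $\End_{\D}(\CC)$; in particular the two categories have the same finite number of isomorphism classes of simple objects. Since the forgetful functor $\End^*_{\D}(\CC)\to\End_{\D}(\CC)$ is fully faithful it preserves simplicity and sends non-isomorphic simples to non-isomorphic simples, hence induces a bijection on simple objects and is essentially surjective. Transporting this back through the dictionary shows $J$ is essentially surjective, which completes the proof.

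The main obstacle I anticipate is the faithful bookkeeping in the second paragraph: writing down the $\CC$-bimodule structure on the functor $X\otimes(-)$ attached to $(X,c)$, and checking that its module constraints are unitary exactly when $c$ is a unitary half-braiding, so that the unitary center is matched with $\End^*_{\D}(\CC)$ rather than merely sitting inside $\End_{\D}(\CC)$. A more hands-on alternative would be to fix $(X,c)$ and seek a positive invertible $h\in\End(X)$ with $c^*_{X,W}(\id_W\otimes h)c_{X,W}=h\otimes\id_W$ for all $W$, equivalently a positive isomorphism $h:(X,c)\to(X,c^\bullet)$ in $\mathcal{Z}(\CC)$, where $c^\bullet_{X,W}:=(c^{-1}_{X,W})^*$ is again a half-braiding; then $g=h^{1/2}$ would conjugate $c$ to a unitary half-braiding via Remark \ref{remark polar}. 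However, producing such an $h$ directly runs into the multiplicativity of the polar parts across the hexagon, which is exactly what the module-category argument circumvents.
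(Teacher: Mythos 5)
Your proposal is correct and follows essentially the same route as the paper: identify $\mathcal{Z}(\CC)$ with $\End_{\CC\overline{\boxtimes}\CC^{rev}}(\CC)$ and $\mathcal{Z}^*(\CC)$ with $\End^*_{\CC\overline{\boxtimes}\CC^{rev}}(\CC)$, then invoke Theorem \ref{End* equivalent to End}. Your refinement of using that theorem only to count simple objects and then upgrading the fully faithful braided inclusion $J$ to an equivalence is a nice touch, since the paper's one-line appeal to the theorem produces a monoidal equivalence and leaves the braided compatibility (which it asserts in the statement) implicit.
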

\begin{proof}
Let $\CC\overline{\boxtimes} \CC^{rev}$ be the external tensor product
with the obvious structure of $*$-fusion category, see Definition
\ref{ext product}. The $*$-category $\CC$ is a
$\CC\overline{\boxtimes} \CC^{rev}$-module $*$-category.  By
\cite[Proposition 2.2]{O2} the center is equivalent to
$\End_{\CC\overline{\boxtimes} \CC^{rev}}(\CC)$ and it is easy to
see that unitary center is $*$-monoidally equivalent to
$\End_{\CC\overline{\boxtimes} \CC^{rev}}^*(\CC)$, so by Theorem
\ref{End* equivalent to End} they are monoidally equivalent.
\end{proof}

\begin{corollary}\label{weakly gt unitarizable}
Let $\CC$ be a weakly group-theoretical braided fusion category.
Then for every object $X\in \CC$, the $\B_n$-representation on
$\End_{\CC}(X^{\ot n})$ is unitarizable.  Moreover, if $\FPdim(X)\in
\N$ then the sequence $YB_{(X,c)}$ of algebras under $\C\B$ has a
unitary quasi-localization of
dimension $\FPdim(X)$, and if $\CC$ has a fiber functor then
$YB_{(X,c)}$ has a unitary localization of dimension
$\FPdim(X)$.
\end{corollary}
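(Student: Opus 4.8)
The plan is to reduce everything to the unitary setting and then invoke Proposition \ref{localization of quasi-Hopf and Hopf}. First I would record that, being weakly group-theoretical, $\CC$ is completely unitary by Theorem \ref{w g-t are unitary}, so without loss of generality $\CC$ is a unitary fusion category carrying some (a priori non-unitary) braiding $c$. To unitarize the braiding I would use Proposition \ref{centro unitario}: the braiding gives a braided functor $\CC\to\mathcal{Z}(\CC)$, $X\mapsto(X,c_{X,-})$, whose composite with the forgetful functor is the identity on $\CC$. Transporting along the braided equivalence $\mathcal{Z}(\CC)\cong\mathcal{Z}^*(\CC)$ replaces each half-braiding by a unitary one, exhibiting $(\CC,c)$ as braided equivalent to a unitary braided fusion category. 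In the unitary model the generators $c_i$ of eqn. (\ref{rep from YBO}) are unitaries of the finite-dimensional $C^*$-algebra $\End_\CC(X^{\ot n})$; hence they act unitarily on the faithful Hilbert module $W_n^X$ and, by left multiplication with respect to a faithful trace, on $\End_\CC(X^{\ot n})$ itself, which gives the asserted unitarizability.

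For the second statement I would first observe that $\FPdim(X)\in\N$ forces the fusion subcategory $\langle X\rangle$ to be integral. Indeed, since $\CC$ is weakly group-theoretical it is weakly integral, so every simple $Y$ has $\FPdim(Y)=\sqrt{n_Y}$ for some $n_Y\in\N$; writing $X^{\ot j}=\bigoplus_i m_iX_i$ one has $\sum_i m_i\FPdim(X_i)=\FPdim(X)^j\in\N$. As the square-free square roots are linearly independent over $\Q$ while the multiplicities $m_i$ are nonnegative, no simple constituent of any $X^{\ot j}$ can have irrational dimension, so every simple of $\langle X\rangle$ is integral.

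Now $\langle X\rangle$ is an integral, braided, weakly group-theoretical fusion category, so by Remark \ref{crit rem} it is $\Rep(H)$ for a quasi-triangular quasi-Hopf algebra $H$, with $X$ corresponding to a module $V$ of dimension $\dim V=\FPdim(X)$. Being weakly group-theoretical, $H$ is a quasi-Kac algebra by Corollary \ref{semisimple Hop then Kac}, and after the unitarization above $V$ is a unitary module. The monoidal equivalence identifies the sequence of algebras $YB_{(X,c)}$ with $YB_{(V,c)}$ under $\C\B$, so Proposition \ref{localization of quasi-Hopf and Hopf} yields a unitary quasi-localization of $YB_{(X,c)}$ of dimension $\dim V=\FPdim(X)$. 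If moreover $\CC$ admits a fiber functor, its restriction to $\langle X\rangle$ is again a fiber functor, so $H$ may be taken to be a genuine (coassociative) quasi-triangular Hopf algebra; it is then a Kac algebra by Corollary \ref{semisimple Hop then Kac}, and the last clause of Proposition \ref{localization of quasi-Hopf and Hopf} upgrades the quasi-localization to a unitary localization of dimension $\FPdim(X)$.

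The main obstacle I anticipate is the unitarization of the braiding: one must verify that the braided equivalence $\mathcal{Z}(\CC)\cong\mathcal{Z}^*(\CC)$ of Proposition \ref{centro unitario} is compatible with the forgetful functors to $\CC$, so that replacing $c$ by its unitary counterpart is a braided equivalence fixing the underlying fusion category and its braid representations. The integrality of $\langle X\rangle$ and the final appeals to Proposition \ref{localization of quasi-Hopf and Hopf} are then comparatively routine.
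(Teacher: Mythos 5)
Your proposal is correct and takes essentially the same route as the paper's proof: complete unitarity of weakly group-theoretical categories (Theorem \ref{w g-t are unitary}) together with the canonical braided embedding $\CC\to\mathcal{Z}(\CC)$ and the braided equivalence $\mathcal{Z}(\CC)\cong\mathcal{Z}^*(\CC)$ of Proposition \ref{centro unitario} to unitarize the braiding, followed by Proposition \ref{localization of quasi-Hopf and Hopf} and Corollary \ref{semisimple Hop then Kac} for the quasi-localization and localization. You merely spell out steps the paper leaves implicit (the integrality of $\langle X\rangle$ when $\FPdim(X)\in\N$, via linear independence of square roots, and the restriction of the fiber functor), and your anticipated obstacle is harmless: one does not need the equivalence to fix the underlying fusion category, since the image of $\CC$ in $\mathcal{Z}^*(\CC)$ is a full subcategory with unitary braiding, and any braided equivalence transports the $\B_n$-representations on $\End_\CC(X^{\ot n})$ isomorphically.
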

\begin{proof}We shall prove that every braided weakly
group-theoretical fusion category is braided equivalent to a unitary
braided fusion category.

The fusion category $\CC$ is weakly group-theoretical so it is a completely
unitary fusion category. Since $\CC$ is braided, we have a canonical
injective braided monoidal functor $F : \CC \to \mathcal{Z}(\CC)$,
but by Proposition \ref{centro unitario}, $\mathcal{Z}(\CC)$ is
braided equivalent to the unitary center $\mathcal{Z}^*(\CC)$, so
$\CC$ is braided equivalent to a unitary braided fusion category,
\emph{i.e.}, the braiding maps are unitary.

The others parts of the corollary follow from Proposition
\ref{localization of quasi-Hopf and Hopf} and Corollary
\ref{semisimple Hop then Kac}.
\end{proof}

\subsection{$\CC(\mathfrak{sl}_3,6)$: A case study}\label{case
study}
In this subsection we investigate a particular sequence
of braid group representations that does not appear to have a
localization, but does have both quasi- and $(k,m)$-localizations.  This
illustrates the
criterion mentioned in Remark \ref{crit rem} and justifies the notion of
$(k,m)$-localizations.

The integral unitary modular category $\CC(\ssl_3,6)$ has $10$ simple objects
and $\FP$-dimension $36$.  It was shown in \cite{NR} that
$\CC(\ssl_3,6)$ is non-group-theoretical and in fact has minimal
dimension among non-group-theoretical integral modular
categories.  The integrality of $\CC(\ssl_3,6)$ implies
that there is a semisimple, finite dimensional quasi-triangular
quasi-Hopf algebra $A$ such that $\Rep(A)\cong\CC(\ssl_3,6)$ as braided fusion
categories.  The simple object $X$ analogous to the vector
representation of $\ssl_3$ has $\FPdim(X)=2$ and tensor-generates
$\CC(\ssl_3,6)$. By Jimbo's quantum Schur-Weyl duality (\cite{Ji}), the unitary
sequence of $\B_n$ representations $(\rho_X,W_n^X)$ is equivalent to
the Jones-Wenzl representations factoring over the semisimple quotients
$\mathcal{H}_n(3,6)$ of the Hecke-algebras $\mathcal{H}_n(q)$ with
$q=e^{2\pi i/3}$ (see \cite{Rquat}).  Explicitly, one has an
isomorphism $\End(X^{\ot n})\cong\mathcal{H}_n(3,6)$ which
intertwines the $\B_n$ representations, and $\End(X^{\ot n})$ is
generated by the image of the braid group.  The eigenvalues of
$\rho_X(\sigma_i)$ are $-1$ and $e^{2\pi i/6}$ in this case.

Moreover, $(\rho_X,W_n^X)$ has a $2$-dimensional unitary
quasi-localization by Corollary \ref{weakly gt unitarizable}.
However, to explicitly determine the quasi-braided vector space
$(V,a,c)$ would require solving the pentagon and hexagon
equations, a notoriously difficult task. The task would be
significantly easier if $\CC(\ssl_3,6)$ were equivalent to $\Rep(H)$
for some (strictly coassociative) Hopf algebra $H$ since then one may assume $a$
is trivial, \emph{i.e.}
$(\rho_X,W_n^X)$ would have a $2$-dimensional localization.  This is
not the case:

\begin{lemma}\label{no unitary loc}
There is no unitary braided vector space of the form $(V,c)$ with
$\dim(V)=2$ localizing $(\rho_X,W_n^X)$ (in the sense of Definition
\ref{localdef}).
\end{lemma}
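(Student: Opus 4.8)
The plan is to argue by contradiction: suppose $(V,c)$ is a unitary braided vector space with $\dim V=2$ together with injective $*$-algebra maps $\phi_n\colon\C\rho_X(\B_n)\to\End(V^{\ot n})$ satisfying $\phi_n\circ\rho_X=\rho^{(V,c)}$. First I would pin down the spectrum of $c$. Since $\phi_n$ is an injective algebra homomorphism and $\phi_2(\rho_X(\sigma_1))=c$, the operator $c$ satisfies the minimal polynomial of $\rho_X(\sigma_1)$; as the eigenvalues of $\rho_X(\sigma_i)$ are $-1$ and $e^{\pi i/3}$, we get $(c+1)(c-e^{\pi i/3})=0$, so $c$ is a $4\times4$ unitary with spectrum contained in $\{-1,e^{\pi i/3}\}$, both eigenvalues occurring (else $\phi_2$ fails to be injective). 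Write $a$ and $b=4-a$ for the multiplicities of $-1$ and $e^{\pi i/3}$.

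The heart of the argument is to track how the isotypic multiplicities are forced by the tower structure. Set $d_\lambda^{(n)}=\dim\Hom_\CC(X_\lambda,X^{\ot n})$ (the dimension of the $\B_n$-irreducible $L_\lambda$, computed in the truncated category) and $k_\lambda^{(n)}=\dim\Hom_{\B_n}(L_\lambda,V^{\ot n})$ (its multiplicity in the localization); injectivity of $\phi_n$ forces $k_\lambda^{(n)}\ge 1$ for every $X_\lambda$ occurring in $X^{\ot n}$. Comparing the two factorizations of the inclusion $\C\rho_X(\B_n)\hookrightarrow\End(V^{\ot(n+1)})$ — one through $\C\rho_X(\B_{n+1})$, whose Bratteli diagram is the fusion graph of $X$, the other through $\End(V^{\ot n})$ via $f\mapsto f\ot\id_V$ with multiplicity $2$ — yields the commuting-square identities
\begin{equation}
\sum_{\mu}N_{X_\lambda,X}^{X_\mu}\,k_\mu^{(n+1)}=2\,k_\lambda^{(n)},\qquad \sum_\lambda d_\lambda^{(n)}\,k_\lambda^{(n)}=2^n,
\end{equation}
where $N$ denotes the fusion coefficients of $\CC(\ssl_3,6)$.

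At $n=2$ one has $\{X_{(2)},X_{(1,1)}\}$ with $L_{(2)},L_{(1,1)}$ one-dimensional, so $k_{(2)}^{(2)}=b$ and $k_{(1,1)}^{(2)}=a$. I would then run the identities out to $n=4$, using the level-$3$ truncation of the $\ssl_3$ fusion rules — in particular that the classically-expected $(4)$ is truncated away, forcing $X_{(3)}$ to be invertible and $X_{(3)}\ot X=X_{(3,1)}$. Writing $t=k_{(2,1)}^{(3)}$, the identities give $k_{(2,2)}^{(4)}=6t-16$, $k_{(3,1)}^{(4)}=4b-2t$ and $k_{(2,1,1)}^{(4)}=4a-2t$. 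Positivity $k_{(2,2)}^{(4)}\ge 1$ forces $t\ge 3$, while $k_{(3,1)}^{(4)},k_{(2,1,1)}^{(4)}\ge 1$ force $t\le\min(2a,2b)-1$; since $a+b=4$ this is possible only for $a=b=2$ (the cases $(a,b)=(1,3),(3,1)$ being eliminated), whence $k_\lambda^{(n)}=\FPdim(X_\lambda)$ at every level.

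It remains to exclude the balanced case $a=b=2$, and this is where I expect the real difficulty. The numerology above is perfectly consistent for $a=b=2$ — indeed $k_\lambda^{(n)}=\FPdim(X_\lambda)$ solves the identities at every $n$ — so no counting argument can finish, and one must use the actual operator $c$. Here I would invoke the classification of unitary solutions of the Yang--Baxter equation on $\C^2$: a unitary Yang--Baxter operator whose two eigenvalues have equal multiplicity $2$ is, up to a scalar and local unitary equivalence, of Bell/Clifford type, and its eigenvalue ratio is then a primitive fourth root of unity. Our required ratio is $e^{\pi i/3}/(-1)=e^{4\pi i/3}$, a primitive \emph{cube} root of unity, so no such $c$ exists, a contradiction. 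Making this final step rigorous — either by citing the classification, or by a direct analysis of the braid relation for $c=e^{\pi i/3}P+(-1)(\id-P)$ with $\rank P=2$ and the resulting constraint on the two eigenvalues — is the crux of the proof.
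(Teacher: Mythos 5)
Your overall architecture ends exactly where the paper's proof begins and ends: the paper's entire argument is the step you defer to at the close. It cites Dye's Theorem 4.1, which classifies all $4\times 4$ unitary solutions of the braid-form Yang--Baxter equation into four families up to an overall scalar and conjugation by $Q\ot Q$, and then observes that none of these families has eigenvalues of the form $\{-\chi,\chi e^{2\pi i/6}\}$ (equivalently, eigenvalue ratio $-e^{\pi i/3}=e^{4\pi i/3}$, the primitive cube root you identify), this verification being carried out in the cited papers of Franko--Rowell--Wang and Franko. Crucially, that eigenvalue test involves no hypothesis on multiplicities, so it disposes of every distribution $(a,b)$ at once. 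Your Bratteli-diagram analysis is correct as far as it goes --- I checked your identities against the level-$3$ $\ssl_3$ fusion rules, including the truncation $X_{(3)}\ot X=X_{(3,1)}$ and the values $k^{(4)}_{(3,1)}=4b-2t$, $k^{(4)}_{(2,1,1)}=4a-2t$, $k^{(4)}_{(2,2)}=6t-16$, which indeed force $a=b=2$ --- but it is dead weight relative to the paper's route: it narrows the problem to the one case your final step would have to handle anyway, and that final step handles all cases uniformly.

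The genuine gap is that the classification statement you invoke at the crux is false as stated. It is not true that a unitary Yang--Baxter operator on $\C^2\ot\C^2$ with two eigenvalues of equal multiplicity $2$ must be of Bell/Clifford type with eigenvalue ratio a primitive fourth root of unity: the generalized permutation matrix
\begin{equation*}
R=\begin{pmatrix}1&0&0&0\\0&0&1&0\\0&1&0&0\\0&0&0&-1\end{pmatrix}
\end{equation*}
is a unitary solution of the braid-form YBE (one checks the braid relation directly on all eight basis vectors of $(\C^2)^{\ot 3}$; it belongs to Dye's permutation-type families) with spectrum $\{1,1,-1,-1\}$ --- balanced multiplicities, ratio $-1$. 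So the dichotomy ``balanced implies Bell type'' fails, and your proposed finish, whether by ``citing the classification'' in the form you state it or by analyzing $c=e^{\pi i/3}P-(\id-P)$ with $\rank P=2$ under the braid relation alone (which cannot succeed, since the displayed $R$ is a balanced non-Bell solution), does not close. The correct repair is precisely the paper's: use Dye's full four-family list and verify, family by family as in Franko--Rowell--Wang and Franko, that the ratio $e^{4\pi i/3}$ never occurs among the eigenvalues of any unitary solution; since this check needs no multiplicity information, your counting argument can then be discarded entirely.
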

\begin{proof}
Dye \cite[Theorem 4.1]{Dye} has classified all $4\times 4$ unitary
solutions to the Yang-Baxter equation.  Up to multiplying by a
scalar and conjugation by matrices of the form $Q\otimes Q$ the
solutions are of 4 forms.  To see that none of these can localize
$(\rho_X,W_n^X)$ one need only check that the eigenvalues are not of
the form $\{-\chi,\chi e^{2\pi i/6}\}$ with $\chi\in\C$.  This is accomplished
in
\cite{FRW,Jenn}.
\end{proof}

Applying our results we obtain the following:

\begin{theorem}
Any semisimple quasi-Hopf algebra $A$ with
$\CC(\ssl_3,6)\cong\Rep(A)$ is non-coassociative.
\end{theorem}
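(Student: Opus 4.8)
The plan is to prove the contrapositive by combining the obstruction criterion from Remark \ref{crit rem} with the non-existence result of Lemma \ref{no unitary loc}. Specifically, I would argue that if $A$ were a semisimple \emph{coassociative} quasi-Hopf algebra (i.e. an honest Hopf algebra) with $\Rep(A)\cong\CC(\ssl_3,6)$, then the forgetful functor would be a fiber functor, and by the final clause of Proposition \ref{localization of quasi-Hopf and Hopf} the sequence $YB_{(X,c)}$ would admit a localization $(V,c)$ with $\dim(V)=\FPdim(X)=2$. Since $\CC(\ssl_3,6)$ is weakly group-theoretical (indeed integral), Corollary \ref{weakly gt unitarizable} guarantees that the braid representations are unitarizable, so this localization could in fact be taken to be a \emph{unitary} localization of dimension $2$. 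This directly contradicts Lemma \ref{no unitary loc}, which asserts that no such unitary braided vector space exists.

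The key steps, in order, are: first, observe that coassociativity of $A$ is precisely the condition that the associator $\Phi$ be trivial, so that $\Rep(A)$ has a genuine fiber functor; second, invoke Proposition \ref{localization of quasi-Hopf and Hopf} (Hopf algebra case) to extract a $2$-dimensional localization of $YB_{(X,c)}$; third, use Proposition \ref{seq braid rep prop} together with the faithful representation $(\vartheta_n,W_n^X)$ (as in Corollary \ref{cor faithful}) to transport this into a localization of $(\rho_X,W_n^X)$ in the sense of Definition \ref{localdef}; fourth, upgrade to the unitary setting via Corollary \ref{weakly gt unitarizable}; and finally, appeal to Lemma \ref{no unitary loc} for the contradiction. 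The conclusion is then that $A$ must be non-coassociative.

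The main subtlety I expect is in the fourth step: ensuring that the localization produced by the Hopf-algebra hypothesis can genuinely be taken to be \emph{unitary} of the same dimension $2$, rather than merely a localization over $\text{Vec}_f$. The point is that Lemma \ref{no unitary loc} only rules out \emph{unitary} $2$-dimensional braided vector spaces, so I must be careful to match hypotheses. Here the integrality and weak group-theoreticity of $\CC(\ssl_3,6)$ are essential: Corollary \ref{weakly gt unitarizable} gives a unitary (quasi-)localization of dimension $\FPdim(X)=2$, and when the fiber functor exists this is in fact a unitary localization with $a$ trivial. Thus the existence of a coassociative structure forces the existence of exactly the object that \cite{Dye}, \cite{FRW,Jenn} have shown cannot exist. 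An alternative, cleaner route avoiding any unitarity bookkeeping would be to prove a non-unitary analogue of Lemma \ref{no unitary loc} — but given the available classification results the unitary argument is the most direct, and I would present it that way.
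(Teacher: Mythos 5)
Your argument is, in structure, exactly the paper's proof: weak group-theoreticity of $\CC(\ssl_3,6)$ feeds into Corollary \ref{weakly gt unitarizable}, whose proof already packages your steps two through four --- in particular the unitarity upgrade you worry about in step four is handled there via Corollary \ref{semisimple Hop then Kac} (a weakly group-theoretical Hopf algebra is a Kac algebra, so Proposition \ref{localization of quasi-Hopf and Hopf} yields a \emph{unitary} localization of dimension $\FPdim(X)=2$, with $a$ trivial) --- and the resulting unitary $2$-dimensional braided vector space contradicts Lemma \ref{no unitary loc}. So the bookkeeping you flag as the main subtlety is resolved inside the paper's machinery precisely as you hoped, and no non-unitary analogue of Lemma \ref{no unitary loc} is needed.

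There is, however, one genuine flaw in your justification: you assert that $\CC(\ssl_3,6)$ is weakly group-theoretical ``(indeed integral).'' Integrality does \emph{not} imply weak group-theoreticity --- the paper itself notes repeatedly (after Theorem \ref{w g-t are unitary} and in the remarks following Conjecture \ref{localintegral}) that it is an open question whether every integral, or even weakly integral, fusion category is weakly group-theoretical. So if that parenthetical is your supporting reason, the step is unsupported and the whole argument hangs on it. The paper instead derives weak group-theoreticity from the dimension: $\FPdim(\CC(\ssl_3,6))=36=2^2 3^2$, so by \cite[Theorem 1.6]{ENO2} the category is solvable, and solvable implies weakly group-theoretical. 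With that citation substituted for your parenthetical, your proof coincides with the paper's.
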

\begin{proof}

Since $\FPdim(\CC(\ssl_3,6))=36=2^23^2$, by \cite[Theorem
1.6]{ENO2} $\CC(\ssl_3,6)$ is solvable and hence weakly
group-theoretical. Thus by Corollary \ref{weakly gt unitarizable} if
$\CC(\ssl_3,6)$ admits a fiber functor $(\rho_X,W_n^X)$ admits a
unitary localization of dimension two and this contradicts Lemma
\ref{no unitary loc}.
\end{proof}

Set $\zeta=e^{2\pi i/8}$ and consider the $8\times 8$ unitary block-diagonal
matrix:

\begin{equation}\label{gYB}
R=\frac{-e^{-\pi i/3}}{\sqrt{2}}\begin{pmatrix}
    \zeta^{-1}&0&-\zeta^{-1}&0\\0 &\zeta &0&\zeta\\
\zeta&0&\zeta&0\\0&-\zeta^{-1}&0&\zeta^{-1}
   \end{pmatrix}\oplus \frac{1}{\sqrt{2}}\begin{pmatrix}
    \zeta&0&\zeta&0\\0 &\zeta^{-1} &0&-\zeta^{-1}\\
-\zeta^{-1}&0&\zeta^{-1}&0\\0&\zeta&0&\zeta
   \end{pmatrix}
\end{equation}
Then $(\C^2,R)$ is a $(3,1)$-generalized braided vector space.  In fact, we have
the
following:
\begin{theorem}
$(\C^2,R)$ with $R$ as in (\ref{gYB}) gives a
$(3,1)$-localization of the sequence of braid group representations
$(\rho_X,W_n^X)$.
\end{theorem}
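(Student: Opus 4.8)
The plan is to check the two defining requirements of a $(3,1)$-localization in turn: first that $(\C^2,R)$ is genuinely a $(3,1)$-generalized braided vector space, so that the representations $\rho^{(\C^2,R)}$ of Proposition \ref{km-rep} are defined; and then that sending $\rho_X(\sigma_i)\mapsto c_i^{3,1}$ extends to a \emph{faithful} morphism $\phi\colon\underline{\mS}\to gYB_{(\C^2,R)}$ with $\phi\circ\rho_X=\rho^{(\C^2,R)}$, where $\mS=\mS(\rho_X,W_n^X)$ is the associated sequence of algebras and $\End_{\underline{\mS}}([n])=\rho_X(\C\B_n)\cong\mathcal{H}_n(3,6)$ by Jimbo's quantum Schur--Weyl duality.

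For the first requirement I would appeal to Proposition \ref{km-rep}: by Definition \ref{generalized BVS} it suffices to check the $(3,1)$-gYBE \eqref{gYBE} together with the far-commutativity relation \eqref{gYBfarcomm}. Suppressing associators (as in the remark following Definition \ref{generalized BVS}) and writing $I=\id_{\C^2}$, the $(3,1)$-gYBE becomes the single identity $(R\ot I)(I\ot R)(R\ot I)=(I\ot R)(R\ot I)(I\ot R)$ of $16\times16$ matrices on $(\C^2)^{\ot 4}$; since $k/m+2=5$, the only instance of \eqref{gYBfarcomm} that is not automatic is $j=4$, namely the commutation $(R\ot I^{\ot2})(I^{\ot2}\ot R)=(I^{\ot2}\ot R)(R\ot I^{\ot2})$ on $(\C^2)^{\ot5}$, in which $c_1$ and $c_3$ overlap in one tensor factor. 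Both are finite matrix checks. A direct diagonalization of $R$ — e.g. after permuting each summand of \eqref{gYB} into its two interleaved $2\times2$ blocks — shows $R$ has exactly the eigenvalues $-1$ and $e^{\pi i/3}=e^{2\pi i/6}$, so $R$ obeys the quadratic relation $(R+I)(R-e^{\pi i/3}I)=0$. These are precisely the eigenvalues of $\rho_X(\sigma_i)$, so each $c_i^{3,1}$ satisfies the braid relations and the same quadratic relation as $\rho_X(\sigma_i)$; hence $\rho^{(\C^2,R)}$ factors through the same Hecke algebra $\mathcal{H}_n(q)$, $q=e^{2\pi i/3}$, as $\rho_X$.

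To construct $\phi$ I would use that $\rho_X(\C\B_n)=\End(X^{\ot n})$ is the semisimple quotient $\mathcal{H}_n(3,6)$. Because $R$ is unitary, $\rho^{(\C^2,R)}$ is a unitary representation, so its image is a $C^*$-algebra, hence a \emph{semisimple} quotient of $\mathcal{H}_n(q)$. A bounded computation — verifying on a fixed small number of strands that the extra relations cutting $\mathcal{H}_n(3,6)$ out of $\mathcal{H}_n(q)$ (vanishing of the $q$-antisymmetrizer on four consecutive strands, giving the $\ssl_3$ three-row condition, together with the level constraint) hold for the $c_i^{3,1}$; or, more cleanly, that unitarity forces factorization through the positive semisimple quotient — shows that $\rho^{(\C^2,R)}$ factors through $\mathcal{H}_n(3,6)$. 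This yields a surjective algebra map $\phi_n\colon\mathcal{H}_n(3,6)=\rho_X(\C\B_n)\to\rho^{(\C^2,R)}(\C\B_n)$ with $\phi_n\circ\rho_X=\rho^{(\C^2,R)}$; since the tower map $\iota(f)=f\ot I$ on $gYB_{(\C^2,R)}$ (Example \ref{gYBseq}) matches $\C\B_n\hookrightarrow\C\B_{n+1}$, the $\phi_n$ are compatible across $n$ and assemble into a morphism $\phi\colon\underline{\mS}\to gYB_{(\C^2,R)}$.

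The remaining, and genuinely hard, step will be \emph{faithfulness} of $\phi$, i.e. injectivity of each $\phi_n$. Since $\mathcal{H}_n(3,6)$ is semisimple, $\phi_n$ is injective precisely when every simple $\mathcal{H}_n(3,6)$-module occurs in the decomposition of $(\C^2)^{\ot n+1}$ under $\rho^{(\C^2,R)}$, equivalently when $\dim\rho^{(\C^2,R)}(\C\B_n)=\dim\mathcal{H}_n(3,6)$. I would prove this by induction on $n$, comparing Bratteli diagrams: the base cases (small $n$, where already $\dim\mathcal{H}_2=2$ and $\dim\mathcal{H}_3=6$ are easily matched) are settled by diagonalizing $R$ and its translates explicitly, while the inductive step uses the compatible inclusions $\rho^{(\C^2,R)}(\C\B_n)\subset\rho^{(\C^2,R)}(\C\B_{n+1})$ and the fact that the two-eigenvalue (Hecke) structure forces the branching multiplicities to agree with the $\ssl_3$ fusion graph governing $\mathcal{H}_n(3,6)\subset\mathcal{H}_{n+1}(3,6)$. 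The principal obstacle is exactly this bookkeeping: controlling the full decomposition of the $(3,1)$-representation for all $n$ and confirming that no simple summand is dropped. It is unitarity — hence semisimplicity of the image — together with the exact match of both eigenvalues and branching rules (note $\FPdim(X)=2$ makes the target dimensions $2^{n+1}$ large enough to carry every constituent) that should make the induction close.
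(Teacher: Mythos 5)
Your overall skeleton (verify the gYBE relations, show the representation afforded by $R$ factors through $\mathcal{H}_n(3,6)$, then prove faithfulness) matches the paper's outline, and your reduction of the far-commutativity check to the single case $j=4$ and the eigenvalue computation are fine. But both of your crucial steps have genuine gaps, and both are traceable to the same missing idea. First, factorization: unitarity of $\rho_R$ only shows its image is a semisimple ($C^*$-) quotient of $\mathcal{H}_n(q)$, and at a root of unity $\mathcal{H}_n(q)$ has \emph{several} inequivalent unitary semisimple quotients --- one for each admissible Markov parameter (the various $\mathcal{H}_n(k',\ell)$), plus non-Markov quotients --- so "unitarity forces factorization through the positive semisimple quotient" does not single out $\mathcal{H}_n(3,6)$. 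Your alternative, verifying local relations (the $q$-antisymmetrizer and level constraint) on a bounded number of strands, would additionally require knowing that those local relations generate the annihilator ideal $Ann(tr)$ for all $n$, a nontrivial Goodman--Wenzl-type theorem you neither cite nor prove. Second, faithfulness: your induction does not close as stated. From the branching identity $2\mu^n = G_n\mu^{n+1}$ relating multiplicity vectors, strict positivity of $\mu^n$ does not force strict positivity of $\mu^{n+1}$ at any single step, and the assertion that "the two-eigenvalue structure forces the branching multiplicities to agree with the $\ssl_3$ fusion graph" is exactly the statement to be proved, not a mechanism for proving it. (A Perron--Frobenius limiting argument, as in the paper's Theorem 5.2, could in principle be adapted here, but you would have to run it and you do not.)

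The paper resolves both steps with one device that your proposal lacks: the uniqueness of the Markov trace. Define $Tr$ on $\End((\C^2)^{\ot n+1})$ as $2^{-(n+1)}$ times the ordinary matrix trace and check directly that its pullback along $\rho_R$ satisfies the three Markov axioms with the correct parameter $\eta=\frac{1-q^{-2}}{1+q^3}$; by uniqueness the pullback \emph{is} the trace $tr$ defining $\mathcal{H}_n(3,6)$. Since $Tr$ is faithful (positive definite) on the full matrix algebra, one gets $\ker\rho_R = Ann(tr)$ exactly: the inclusion $Ann(tr)\subseteq\ker\rho_R$ gives factorization through $\mathcal{H}_n(3,6)$, and the reverse inclusion gives faithfulness of the induced map, simultaneously and for all $n$ at once, with no Bratteli-diagram bookkeeping. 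This is the step you would need to add; without it (or a worked-out substitute such as the PF-eigenvector argument, or the explicit quaternionic decomposition of $(\C^2)^{\ot n+1}$ from the Goldschmidt--Jones construction), neither the identification of the quotient nor the injectivity of $\phi$ required by the definition of a $(3,1)$-localization is established.
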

\begin{proof}
By the discussion above we may replace $(\rho_X,W_n^X)$ by the
Jones-Wenzl representation $(\pi_n,V_n)$ associated with the
semisimple quotient $\mathcal{H}_n(3,6)$ of the specialized Hecke
algebra $\mathcal{H}_n(q)$ with $q=e^{2\pi i/6}$ (see
\cite{Wenzl88}).  Here the quotient is by the annihilator of the
trace $tr$ on $\mathcal{H}_n(q)$ uniquely determined by
\begin{enumerate}
 \item $tr(1)=1$
\item $tr(ab)=tr(ba)$
\item $tr(be_n)=tr(b)\eta$ where $b\in\mathcal{H}_{n-1}(q)$,
\end{enumerate}
where $e_n$ are the generators of $\mathcal{H}_n(q)$ and
$\eta=\frac{1-q^{-2}}{1+q^3}$.

Thus it is enough to show that the $\mathcal{B}_n$-representation
afforded by $R$ in (\ref{gYB}) factors over $\mathcal{H}_n(3,6)$ and
induces a faithful representation.  Direct calculation shows that
the $\rho_R(\sigma_i)$ indeed satisfy the defining relations of
$\mathcal{H}_n(q)$ for $q=e^{2\pi i/6}$.  Defining $Tr$ on
$\End({\C^2}^{\ot n+1})$ (here $k+m(n-2)=n+1$) as $\frac{1}{2^{n+1}}$
times the usual trace, one concludes that
$\rho_R^{-1}(Tr)(a):=Tr(\rho_R(a)$ coincides with $tr$ by checking
that it satisfies the relations above using standard techniques (see
eg. \cite[proof of Lemma 3.1]{Rquat}).  In particular we see that
the kernel of $\rho_R$ (restricted to $\mathcal{H}_n(q)$) lies in
the annihilator of the trace $tr$ so that $\rho_R$ factors over the
quotient $\mathcal{H}_n(3,6)$.  Faithfulness of the induced
representation follows from a dimension count or by observing that
$Tr$ is faithful on $\End((\C^2)^{\ot n})$.

\end{proof}

\begin{remarks}\begin{enumerate}
                \item  The matrix $R$ above was derived from unpublished notes
of
Goldschmidt and
Jones describing a sequence of quaternionic representations of $\B_n$.  Indeed,
let $\imath$ and $\jmath$ denote the usual generators of $\mathbb{H}$ the
quaternionic division algebra.  Define
$$r=-e^{-\pi
i/3}/2(1+\imath\otimes\jmath\otimes\imath+1\otimes(\imath\jmath)\otimes
1+\jmath\otimes\imath\otimes\imath)$$
as an element of $\mathbb{H}^{\ot 3}$.  Applying the faithful $2$-dimensional
representation of $\mathbb{H}$ to $r$ yields the matrix $R$ in (\ref{gYB}).
\item The category $\CC(\ssl_3,6)$ is a subquotient of $\Rep(U_q\ssl_3)$ with
$q=e^{\pi i/6}$ with the $2$-dimensional simple object $X$ corresponding to the
$3$-dimensional vector
representation $V$ of $U_q\ssl_3$.  As such, there is a $9\times 9$
matrix solution $\check{R}$
to the Yang-Baxter equation that associated to $V$ (due to Jimbo \cite{Ji}).
However,
$\check{R}$ is not unitary and the braid group representations it affords has
subrepresentations that are not subrepresentations of $(\rho_X,W_n^X)$.
\item The results of \cite{FKW} imply that the representation spaces
$W_n^X$ can be uniformly embedded in a ``local'' Hilbert space but the braid
group only acts on a small subspace.  This issue was the main motivation
for \cite{RW}.
               \end{enumerate}

\end{remarks}

\section{Conjectures and General Results}\label{conj and results}

We have the following version of \cite[Conjecture 4.1]{RW}:
\begin{conj}\label{localintegral}
Let $X$ be
a simple object in a fusion category $\CC$ with $(X,c)$ a
Yang-Baxter operator.  Then the following are equivalent:
\begin{enumerate}
 \item[(a)] The sequence of algebras
$(YB_{(X,c)},\rho^{(X,c)},\iota)$ under $\C\B$ has a (unitary) generalized
localization
\item[(b)] The sequence of algebras
$(YB_{(X,c)},\rho^{(X,c)},\iota)$ under $\C\B$ has a (unitary)
quasi-localization
\item[(c)] $\FPdim(X)^2\in\N$
\item[(d)] $\rho^{(X,c)}(\B_n)$ is a finite group for all $n$.
\end{enumerate}
\end{conj}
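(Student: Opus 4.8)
The plan is to treat the four conditions asymmetrically. The implications toward weak integrality, $(a)\Rightarrow(c)$ and $(b)\Rightarrow(c)$, are the accessible part and constitute the analogue of \cite[Theorem 4.5]{RW}; the existence statements $(c)\Rightarrow(a),(b)$ and the equivalence $(c)\Leftrightarrow(d)$ are of a genuinely different and harder character. For $(a),(b)\Rightarrow(c)$ the starting observation is that, by Definition \ref{c-loc}, a (generalized or quasi-) localization is a morphism out of $\underline{YB_{(X,c)}}$, so it gives for every $n$ a \emph{faithful} algebra embedding
\[
\phi_n\colon \rho^{(X,c)}(\C\B_n)\hookrightarrow \End(V^{\circledast N(n)})\cong M_{(\dim V)^{N(n)}}(\C),
\]
intertwining the braid representations, where $N(n)=n$ in the quasi case and $N(n)=k+m(n-2)$ in the $(k,m)$ case. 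Note that, unlike in \cite[Theorem 4.5]{RW}, no surjectivity of $\psi_X^n$ is needed: we work directly with the braid-group image. Since Corollary \ref{weakly gt unitarizable} makes the relevant subcategory unitarizable in the cases we can control, $B_n:=\rho^{(X,c)}(\C\B_n)=\bigoplus_\lambda M_{\dim\lambda}(\C)$ is semisimple, and $V^{\circledast N(n)}$ is a faithful $B_n$-module, so $(\dim V)^{N(n)}=\sum_\lambda \mu_\lambda^{(n)}\dim\lambda$ with all multiplicities $\mu_\lambda^{(n)}\in\N$.

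The heart of the argument is then a comparison of two Perron--Frobenius systems attached to the tower $\{B_n\}$. On the fusion side, the block sizes $\dim\lambda$ are governed by the fusion matrix of $X$, whose Perron eigenvalue is $\FPdim(X)$ and whose weight data are the ratios $\FPdim(X_i)/\FPdim(X)^n$ of Frobenius--Perron dimensions. On the localization side, the integer multiplicity vectors $\mu^{(n)}$ are constrained by the tower compatibility of $\phi$ with the inclusions $\iota$ (realized on the target by $f\mapsto f\otimes \id_V^{\ot m}$) to evolve under a fixed non-negative integer transfer matrix of Perron eigenvalue $(\dim V)^{m}$, subject to $\sum_\lambda \mu_\lambda^{(n)}\dim\lambda=(\dim V)^{N(n)}$. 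Matching the growth and eigenvector data of the two systems yields an arithmetic relation between $\FPdim(X)$ and $\dim V$; feeding this into the Galois-theoretic integrality argument of \cite{RW} --- which exploits that $\FPdim(X)$ is the dominant, totally positive root of an integral characteristic polynomial and so dominates all of its Galois conjugates --- then pins $\FPdim(X)^2$ into $\Q\cap\overline{\Z}=\Z$. The two points where the adaptation is not routine, and where I expect the real work of this direction to lie, are the non-trivial associator $a$ in the quasi case, which deforms the tower inclusions on the source side, and the growth exponent $N(n)=k+m(n-2)$ in the generalized case, which shifts the balance between the two Perron systems.

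For the converse I would separate the settled case from the open one. When $\langle X\rangle$ is integral, i.e. $\FPdim(X)\in\N$, and weakly group-theoretical, the implication $(c)\Rightarrow(b)$ is precisely Corollary \ref{weakly gt unitarizable}, which produces a unitary quasi-localization of dimension $\FPdim(X)$ and an honest localization whenever a fiber functor exists. The genuinely hard part of the conjecture is $(c)\Rightarrow(a)$ in the \emph{non-integral} weakly integral regime, where $\FPdim(X)$ is irrational but $\FPdim(X)^2\in\N$: here no ordinary localization need exist (cf. Lemma \ref{no unitary loc}), and the expectation, borne out for $\CC(\ssl_3,6)$ by the explicit $(3,1)$-operator in \eqref{gYB}, is that a generalized $(k,m)$-localization takes over. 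Constructing such operators in general amounts to solving the $(k,m)$-gYBE compatibly with the fusion data of $\langle X\rangle$; I do not expect a uniform construction without further structural hypotheses, and this is the principal obstacle to the conjecture as a whole.

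Finally, $(c)\Leftrightarrow(d)$ should be treated as an independent problem, since it is essentially the property-$F$ conjecture \cite[Conjecture 6.6]{RSW}. The implication $(d)\Rightarrow(c)$ looks approachable by the same trace comparison, as a finite braid-group image confines the character values to a cyclotomic field with denominators supported on a single set of primes; but $(c)\Rightarrow(d)$ is known only in scattered cases and is where the remaining difficulty is concentrated.
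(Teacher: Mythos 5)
First, note the statement you were asked to prove is a \emph{conjecture}: the paper does not prove it, and only offers partial results, namely Theorem \ref{thm:main} ((a) or (b) implies (c), \emph{under the extra hypothesis} $\underline{YB_{(X,c)}}=YB_{(X,c)}$), Corollary \ref{weakly gt unitarizable} ((c)$\Rightarrow$(b) when $\langle X\rangle$ is integral and weakly group-theoretical), and remarks relating (c)$\Leftrightarrow$(d) to the property-$F$ conjecture of \cite{RSW}. Your overall triage of the four conditions matches this structure exactly, and your reading of (c)$\Rightarrow$(a) as the genuinely open non-integral case is consistent with the paper's discussion of $\CC(\ssl_3,6)$. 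However, the one substantive claim you make beyond a plan --- that ``unlike in \cite[Theorem 4.5]{RW}, no surjectivity of $\psi_X^n$ is needed: we work directly with the braid-group image'' --- is a genuine gap, and your own next step uses exactly what surjectivity buys. You assert that the block sizes $\dim\lambda$ of $B_n=\rho^{(X,c)}(\C\B_n)$ ``are governed by the fusion matrix of $X$,'' but if the image is a proper subalgebra of $\End_\CC(X^{\ot n})$ its simple modules need not be the spaces $\Hom_\CC(X_i,X^{\ot n})$, the inclusion matrices of the tower $\rho(\C\B_n)\hookrightarrow\rho(\C\B_{n+1})$ need not be the fusion-derived $G_n$, and the Perron--Frobenius eigenvalue extracted from your multiplicity recursion has no a priori relation to $\FPdim(X)$: the arithmetic conclusion would then concern an uncontrolled tower, not $X$. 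This is precisely why Theorem \ref{thm:main} retains hypothesis (a). Relatedly, your appeal to Corollary \ref{weakly gt unitarizable} to get semisimplicity of $B_n$ both restricts the scope (it requires $\CC$ braided and weakly group-theoretical, neither of which the conjecture assumes) and is unnecessary once surjectivity is imposed, since $\End_\CC(X^{\ot n})$ is semisimple for any fusion category.

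Conversely, the two points you flag as ``the real work'' are exactly the ones the paper dispatches quickly. For quasi-localizations, the associator $a$ does not deform the combinatorics at all: by Corollary \ref{cor faithful} a quasi-localization still produces a faithful embedding of $\rho^{(X,c)}(\C\B_n)$ into $\End_\C(V^{\circledast n})$ with the same dimension count, so the proof of \cite[Theorem 4.5]{RW} goes through verbatim. For $(k,m)$-localizations the only modification is the relation $w^m\mathbf{a}_n=G_n\mathbf{a}_{n+1}$ (with $w=\dim W$), after which the paper shows that $\mathbf{a}_l$ is a Perron--Frobenius eigenvector of the period-$p$ product $G^{(0)}$ of inclusion matrices via the limit $(\Lambda^{-1}G)^s\to\mathbf{v}\mathbf{w}$, deduces that the Perron--Frobenius eigenvalue $\Lambda=\lambda^p$ of $G^{(0)}$ is a rational integer (where $\lambda=\FPdim(X)$), and concludes $\lambda^2\in\N$ since $\lambda$ lies in a cyclotomic field. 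Your ``matching the growth and eigenvector data of the two Perron--Frobenius systems'' is the right instinct, but the limit argument forcing $\mathbf{a}_l$ to be a positive eigenvector is the mechanism your sketch is missing; without it one gets no integrality statement. Your treatment of (c)$\Leftrightarrow$(d) as an independent open problem agrees with the paper's remarks and requires no further comment.
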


\begin{remarks}
\begin{enumerate}
\item Corollary \ref{weakly gt unitarizable} shows that if $\CC$ is weakly
group-theoretical and $\FPdim(X)\in\N$ then (b) above holds.  It is not known if
there are any integral (or even weakly integral) fusion categories that are not
weakly group-theoretical.  If indeed $\CC$ integral implies $\CC$ weakly
group-theoretical then $\FPdim(X)\in\N$ implies (b), giving a slightly weaker
version of (c)$\Rightarrow$(b).  For this reason we parenthesize the word
\emph{unitary} as it is not unreasonable to expect that the conjecture is true
with or without unitarity.
 \item Conjecture 4.1 of \cite{RW} is only concerned with simple objects in
braided fusion categories for which the sequence of representations
$(\rho_n,W_n^X)$ is (unitarily) localizable.  We are not aware of any
counterexample to this more restrictive conjecture, but if the
example discussed in Subsection \ref{case study} is such a
counterexample then Conjecture \ref{localintegral} is an appropriate
replacement.
\item If $\CC$ is group theoretical (and hence integral) it is known \cite{ERW}
that (d) holds for any object $X\in\CC$ (for example if $\CC\cong\Rep(D^\omega
G)$, where $D^\omega G$ is the twisted double of a finite group $G$).
Integrality of $\CC$ and Prop. \ref{localization of quasi-Hopf and Hopf} imply
that (b) and (c) also hold.  The equivalence (c)$\Leftrightarrow$(d) has been
considered elsewhere, see \cite{jones86,jones89,GJ,FLW,LRW,RUMA,Rquat,NR}.
\end{enumerate}

\end{remarks}

The following version of \cite[Theorem 4.5]{RW} verifies part of Conjecture
\ref{localintegral} under an additional
hypothesis:
\begin{theorem}\label{thm:main} Let $X$ be a simple object in a fusion category
$\CC$ with $c$ a Yang-Baxter operator on $X$ such that
\begin{enumerate}
 \item[(a)]  $\underline{YB_{(X,c)}}=YB_{(X,c)}$ (i.e. the image of $\B_n$
generates $\End(X^{\ot n})$ as an algebra), and
\item[(b)] the sequence of algebras $(YB_{(X,c)},\rho^{(X,c)},\iota)$ under
$\C\B$ has either a generalized or quasi-localization
\end{enumerate}
then $\FPdim(X)^2\in\N$.
\end{theorem}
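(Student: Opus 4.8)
The plan is to imitate the proof of \cite[Theorem 4.5]{RW}, comparing two Markov traces on the tower of algebras $\mathcal{A}_n:=\End_\CC(X^{\ot n})$. By hypothesis (a) we have $\mathcal{A}_n=\rho^{(X,c)}(\C\B_n)$, so the $\mathcal{A}_n$ form a tower under the inclusions $f\mapsto f\ot\id_X$ whose Bratteli diagram is the fusion graph of $X$; writing $m_i(n)=\dim\Hom_\CC(X_i,X^{\ot n})$ one has $\dim\mathcal{A}_n=\sum_i m_i(n)^2$, whose exponential growth rate is $\FPdim(X)^2$, so it suffices to control this number arithmetically. The Frobenius--Perron dimensions supply a trace of ``modulus'' $\FPdim(X)$ with weights $\FPdim(X_i)$: because $\FPdim$ is a two-sided Perron--Frobenius eigenvector of the fusion matrix $G$ of $X$, one gets $\sum_i m_i(n)\FPdim(X_i)=\FPdim(X)^n$, and for any $\sigma\in\Aut(\C)$ the domination $|\sigma(\FPdim(X_i))|\le\FPdim(X_i)$ yields $|\sigma(\FPdim(X))|\le\FPdim(X)$.

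First I would produce a second trace from (b), treating both cases uniformly. A generalized localization embeds $\mathcal{A}_n\hookrightarrow\End(V^{\ot k+m(n-2)})$ compatibly with $f\mapsto f\ot\id_V^{\ot m}$, and a quasi-localization embeds $\mathcal{A}_n\hookrightarrow\End_{\overline{(V,a,c)}}(V^{\circledast n})\subseteq\End(V^{\ot n})$ compatibly with $f\mapsto f\ot\id_V$. (That $\phi_{n+1}(f\ot\id_X)=\phi_n(f)\ot\id_V^{\ot m}$ follows from $\phi\circ\rho_\A=\rho^{(V,c)}$ and the locality of the generators, using (a).) Pulling back the ordinary matrix trace of the ambient tensor power gives a positive trace $\widetilde{\mathrm{Tr}}_n$ on $\mathcal{A}_n$ that is Markov of modulus $\delta:=(\dim V)^m$ (resp.\ $\delta:=\dim V$), a rational integer, and whose weights $u_i(n)$---the multiplicities of the simple constituents of $\mathcal{A}_n$ in the ambient tensor power---are \emph{nonnegative integers}. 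Since these weights satisfy $G\,u(n+1)=\delta\,u(n)$ and $\FPdim$ is a left eigenvector of $G$ for $\FPdim(X)$, a biorthogonality computation yields the closed form
\[\sum_i u_i(n)\,\FPdim(X_i)=(\dim V)^{k+m(n-2)}\,\FPdim(X)^{2-n}\]
in the generalized case (and $(\dim V)^{n}\,\FPdim(X)^{2-n}$ in the quasi case); in particular this quantity grows like $(\delta/\FPdim(X))^n$.

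The key step is then a Galois argument. Fix $\sigma\in\Aut(\C)$ and apply it to the closed form; since $\dim V$ and the integers $u_i(n)$ are Galois-fixed, the equality becomes $\sum_i u_i(n)\sigma(\FPdim(X_i))=(\dim V)^{k+m(n-2)}\sigma(\FPdim(X))^{2-n}$. Taking absolute values and using $|\sigma(\FPdim(X_i))|\le\FPdim(X_i)$ bounds the left side by $(\dim V)^{k+m(n-2)}\FPdim(X)^{2-n}$, so $|\sigma(\FPdim(X))|^{2-n}\le\FPdim(X)^{2-n}$ for all $n>2$, whence $|\sigma(\FPdim(X))|\ge\FPdim(X)$. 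This is the reverse of the inequality from the first paragraph, so every Galois conjugate of $\FPdim(X)$ has absolute value exactly $\FPdim(X)$. From here I would conclude $\FPdim(X)^2\in\N$ as in \cite{RW}, using that $\FPdim(X)^2=\FPdim(X\ot X^*)$ is itself a Frobenius--Perron dimension and hence a $d$-number.

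The main obstacle is this final arithmetic step: passing from ``all Galois conjugates of $\FPdim(X)$ have the same absolute value'' to ``$\FPdim(X)^2\in\N$'' is precisely where the special properties of Frobenius--Perron dimensions of simple objects must enter, since equal absolute values alone do not force $\FPdim(X)^2$ to be rational, and this is the part I expect to import rather than prove by hand. A secondary technical point is verifying that $\widetilde{\mathrm{Tr}}$ is genuinely a Markov trace in the quasi-localization case---where the spaces $\End_{\overline{(V,a,c)}}(V^{\circledast n})$ are cut out by the associator $a$ although the trace itself is the ambient matrix trace---and in justifying the biorthogonality computation when $G$ fails to be diagonalizable or the fusion graph is periodic.
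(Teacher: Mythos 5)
Your setup is sound and matches the paper's: hypothesis (a) makes the $H_i^n=\Hom(X_i,X^{\ot n})$ a complete set of simple $\rho^{(X,c)}(\C\B_n)$-modules, faithfulness of the localization gives strictly positive integer multiplicity vectors $\mathbf{a}_n$ (your $u(n)$) satisfying $w^m\mathbf{a}_n=G_n\mathbf{a}_{n+1}$ --- this is precisely eqn.\ (5.1) of the paper --- and treating the quasi-case by the same multiplicity count is fine. The divergence, and the gap, is in your Galois step. First, the closed form $\sum_i u_i(n)\FPdim(X_i)=(\dim V)^{k+m(n-2)}\FPdim(X)^{2-n}$ is not justified: the recursion together with $d^TG_n=\FPdim(X)\,d^T$ (where $d$ is the vector of Frobenius--Perron dimensions) determines only the \emph{ratio} $S_{n+1}/S_n=\delta/\FPdim(X)$ for $S_n:=\sum_i u_i(n)\FPdim(X_i)$, never the constant; your normalization would force $\sum_i u_i(n)\FPdim(X_i)$ to equal the honest dimension count $\sum_i u_i(n)\dim\Hom(X_i,X^{\ot n})$, i.e.\ it conflates $\FPdim(X_i)$ with $\dim\Hom(X_i,X^{\ot n})$, which fails whenever some $\FPdim(X_i)\notin\N$. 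The correct statement is $S_n=C\,(\delta/\FPdim(X))^{n}$ with $C$ an unknown algebraic number, so that $S_n^\sigma:=\sum_i u_i(n)\sigma(\FPdim(X_i))=\sigma(C)\,(\delta/\sigma(\FPdim(X)))^{n}$.

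Second, and fatally, you cannot bound $|S_n^\sigma|$ from below, because $\sigma(C)$ can vanish --- and in exactly the cases you need it does vanish. The vector $\sigma(d)$, restricted to the relevant block, is a left eigenvector of the primitive matrix $G=G^{(0)}$ (a block of $N_X^{p}$) with eigenvalue $\sigma(\FPdim(X))^{p}$, while the paper proves via the limit $\lim_{s}(\Lambda^{-1}G)^{s}=\mathbf{v}\mathbf{w}$ that $\mathbf{a}_l$ is proportional to the right Perron--Frobenius eigenvector $\mathbf{v}$ (eigenvalue $\Lambda=\FPdim(X)^{p}$). Hence whenever $\sigma(\FPdim(X))^{p}\neq\FPdim(X)^{p}$, biorthogonality of eigenvectors for distinct eigenvalues forces $S_l^\sigma=\langle\sigma(d),\mathbf{a}_l\rangle=0$, your inequality $|S_n^\sigma|\le S_n$ collapses to $0\le S_n$, and no lower bound $|\sigma(\FPdim(X))|\ge\FPdim(X)$ follows; positivity of the $u_i(n)$ is no help since the entries of $\sigma(d)$ have mixed signs or phases. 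This is why the paper does not argue through equality of Galois conjugate absolute values at all: it uses the limit argument to show that the \emph{integer} vector $\mathbf{a}_l$ is itself a Perron--Frobenius eigenvector of the \emph{integer} primitive matrix $G$, whence $\Lambda=\FPdim(X)^{p}$ is rational, and being an algebraic integer lies in $\Z$; Galois theory enters only at the very last step, where cyclotomicity of $\FPdim(X)$ forces $\FPdim(X)^{s}\in\Z$ for some $s\le 2$. To repair your write-up, replace the trace-comparison step by this eigenvector-integrality argument (your secondary worries about periodicity and non-diagonalizability are handled there by passing to the primitive period-$p$ products $G^{(i)}$, for which the PF eigenvalue is simple and the limit exists).
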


\begin{proof}
By passing to fusion subcategories, we may assume $X$ is a tensor generator for
$\CC$.

For quasi-localization Corollary \ref{cor faithful} implies that the same proof
as in \cite[Theorem 4.5]{RW} goes through without change as quasi-localizability
implies the same combinatorial consequences as localizability.

For the case of generalized localization we adapt the proof in \cite[Theorem
4.5]{RW}.
Suppose $(W,\gamma)$ is a $(k,m)$-localization of
$(YB_{(X,c)},\rho^{(X,c)},\iota)$ with injective morphism
$\phi:\underline{YB_{(X,c)}}\to gYB_{(W,\gamma)}$.  Observe that
$\End_{gYB_{(W,\gamma)}}=\End_\C(W^{\ot k+m(n-2)})$ and set $w=\dim(W)$.  Fix an
ordering $X_0=\one,X_1,\ldots,X_{N-1}$ of the simple objects and define
$H_i^n=\Hom(X_i,X^{\ot n})$ for each $i$ with $\Hom(X_i,X^{\ot n})\neq 0$.  By
(a), the $H_i^n$ form a complete set of simple $\rho^{(X,c)}(\C\B_n)$-modules.
Let $N_X$ denote the fusion matrix of $X$, \emph{i.e.} such that
$(N_X)_{i,j}=\dim\Hom(X_j,X\ot X_i)$.  Let $l\in\N$ be minimal with the property
that for all $i=0,\ldots,N-1$ there exists an $s\leq l$ such that $H_i^s\neq 0$
($l$ exists by \cite[Lemma F.2]{DGNO}).  Now let $p\geq 1$ be minimal such that
$H_0^p\neq 0$ so that $N_X$ is an irreducible matrix of period $p$.  Denote by
$G_n$ the inclusion matrix for the algebras $\End(X^{\ot n})\subset \End(X^{\ot
n+1})$, \emph{i.e.} the matrix of multiplicities obtained by restricting the
$H_j^{n+1}$ to $\End(X^{\ot n})$ and decomposing into a direct sum of $H_i^{n}$.
Semisimplicity of $\End(X^{\ot n})$ and the injectivity of $\phi$ imply that
$W^{\ot k+m(n-2)}\cong \bigoplus_i \mu_i^n H_i^n$ as $\End(X^{\ot n})$-modules
for some $\mu_i^n>0$.  Define a vector of multiplicities $(\ba_n)_i:= \mu_i^n$.
The commutativity of the diagram in Definition \ref{seq of algebras} and the
injectivity of $\phi$ imply:
\begin{equation}\label{comb cons}
 w^m \mathbf{a}_n=G_n \mathbf{a}_{n+1}
\end{equation}
 for all $n\geq l$.

The (square!) inclusion matrices $G^{(i)}= \Pi^{p-1}_{j=0}G_{l+i+j}$ of
$\End(X^{\otimes l+i})\subset \End(X^{\otimes l+i+p})$ are primitive (as $N_X$
is irreducible of period $p$) and hence the Perron-Frobenius eigenvalue of each
$G^{(i)}$ is simple.
 We will first show that $\mathbf{a}_l$ is a Perron-Frobenius eigenvector for
$G:=G^{(0)}$. Eqn. (\ref{comb cons}) implies that
$$(w^m)^{p} \mathbf{a}_l=G\mathbf{a}_{l+p}.$$
For simplicity, let us define $\alpha_0:=\mathbf{a}_l$,
$\alpha_n:=\mathbf{a}_{l+pn}$ and $M=(w^m)^{p}$. In this notation, the above
equation implies
$$M^n \alpha_0=G^n \alpha_n$$
\noindent for all $n\geq 0$. Let $\Lambda$ denote the Perron-Frobenius
eigenvalue of $G$ and let $\mathbf{v}$ and $\mathbf{w}$ be positive right and
left eigenvectors such that $\mathbf{w} \mathbf{v}=1$ and $\lim_{s \rightarrow
\infty}(\frac{1}{\Lambda}G)^s=\mathbf{v} \mathbf{w}$. We can rewrite the above
equation as follows:
$$\alpha_0= \frac{\Lambda^n}{M^n} \left(\dfrac{1}{\Lambda} G \right)^n
\alpha_n.$$
By taking a limit on the right-hand-side and then applying the property of the
eigenvectors $\mathbf{v}$ and $\mathbf{w}$ we have:
$$\alpha_0= \lim_{n \rightarrow \infty}\frac{\Lambda^n}{M^n} \mathbf{v}
\mathbf{w} \alpha_n = \lim_{n \rightarrow \infty} \left(\frac{\Lambda^n
\mathbf{w} \alpha_n}{M^n} \right) \mathbf{v}.$$
The limit exists and thus $\alpha_0=\mathbf{a}_l$ is an eigenvector for $G$ as a
non-zero multiple of $\mathbf{v}$. Similarly, each $\alpha_j$ is also a
Perron-Frobenius eigenvector for $G$. The integrality of $G$ and $\alpha_0$
implies that $\Lambda$ is rational and, moreover, the eigenvalues of $G$ are
algebraic integers, thus $\Lambda$ is a (rational) integer.  The same argument
shows that each $\mathbf{a}_{l+i}$ is a Perron-Frobenius eigenvector for
$G^{(i)}$.
Now we will show that $\FPdim (X)^2 \in \mathbb{N}$. As $N_X$ is irreducible
with period $p$ we may reorder the simple objects $X_0,\ldots,X_{N-1}$ so that
$\left( N_X\right)^p$ is block diagonal with primitive blocks $G^{(i)}$.  Let us
denote the Perron-Frobenius eigenvalue of $N_X$ by $\lambda$. Then by the
Frobenius-Perron theorem each $G^{(i)}$ has $\lambda^p$ as its Perron-Frobenius
eigenvalue which is a (rational) integer by the above arguments.  But $\lambda$
must reside in an abelian (cyclotomic) extension of $\mathbb{Q}$ and this
implies that $\lambda^s \in \mathbb{Z}$ for some $s\leq 2$.
\end{proof}

\end{document}